\title{Equivariant Framed Little Disk Operads are Additive}
\author{Ben Szczesny}
\date{}
\setlist[enumerate,1]{nosep, label = (\arabic*), labelindent=\parindent}
\newlist{conditions}{enumerate}{1} %
\setlist[conditions]{label=(\arabic*),ref = (\theequation.\arabic*)}
\crefname{conditionsi}{condition}{conditions}
\numberwithin{equation}{section}
\numberwithin{figure}{section}
\declaretheorem[name=Theorem,
refname={theorem,theorems},
Refname={Theorem,Theorems},
parent=section,
sibling=equation]{theorem}
\declaretheorem[name=Lemma,
refname={lemma,lemmas},
Refname={Lemma,Lemmas},
parent=section,
numberwithin=section,
sibling=equation]{lemma}
\declaretheorem[name=Corollary,
refname={corollary,corollaries},
Refname={Corollary,Corollaries},
parent=section,
sibling=equation]{corollary}
\declaretheorem[name=Definition,
refname={definition,definitions},
Refname={Definition,Definitions},
parent=section,
style=definition,
sibling=equation]{definition}
\declaretheorem[name=Convention,
refname={convention,conventions},
Refname={Convention,Conventions},
parent=section,
style=definition,
sibling=equation]{convention}
\declaretheorem[name=Example,
refname={example,examples},
Refname={Example,Examples},
parent=section,
style=definition,
sibling=equation]{example}
\declaretheorem[name=Remark,
refname={remark,remarks},
Refname={Remark,Remarks},
parent=section,
style=remark,
sibling=equation]{remark}
\tikzset{pullback/.style={minimum size=1.2ex,path picture={
\draw[opacity=1,black,-,#1] (-0.5ex,-0.5ex) -- (0.5ex,-0.5ex) -- (0.5ex,0.5ex);%
}}}
\newcommand\restr[2]{{%
  \left.\kern-\nulldelimiterspace %
  #1 %
  \littletaller %
  \right|_{#2} %
  }}
\newcommand{\littletaller}{\mathchoice{\vphantom{\big|}}{}{}{}}
\DeclarePairedDelimiterX\set[1]\lbrace\rbrace{\def\given{\;\delimsize\vert\;}#1}%
\DeclarePairedDelimiter\abs{\lvert}{\rvert}
\DeclarePairedDelimiter\pbr{\lparen}{\rparen}
\DeclarePairedDelimiter{\norm}{\lVert}{\rVert}
\renewcommand{\phi}{\varphi}
\DeclareMathOperator*{\colim}{colim}
\DeclareMathOperator{\im}{im}
\DeclareMathOperator{\pr}{pr}
\newcommand{\gtop}[1][G]{\operatorname{Top}^{#1}}
\newcommand{\gsym}[1][G]{\Sigma\operatorname{Seq}^{#1}}
\newcommand{\goperad}[1][G]{\operatorname{Oper}^{#1}}
\newcommand{\groperad}[1][G]{\operatorname{Oper}^{#1}_0}
\DeclareMathOperator{\GL}{GL}
\DeclareMathOperator{\GA}{GA}
\DeclareMathOperator{\orth}{O}
\DeclareMathOperator{\DL}{DL}
\DeclareMathOperator{\DA}{DA}
\DeclareMathOperator{\decomposition}{Dec}
\DeclareMathOperator{\subspace}{Sub}
\DeclareMathOperator{\ambientoperad}{\mathscr{A}}
\newcommand{\staroperad}{\mathscr{S}}
\DeclareMathOperator{\littlediskoperad}{\mathscr{D}}
\DeclareMathOperator{\separateddiskoperad}{\mathscr{K}}
\DeclareMathOperator{\additivecore}{\mathscr{K}}
\DeclareMathOperator{\vertexinput}{In}
\DeclareMathOperator{\treeinput}{In}
\newcommand{\supertreegroupoid}{\mathcal{T}_{\text{sup}}}
\newcommand{\propertreegroupoid}{\mathcal{T}_{\text{prop}}}
\DeclareMathOperator{\supergenerators}{F_{sup}}
\DeclareMathOperator{\propergenerators}{F_{prop}}
\DeclareMathOperator{\coregenerators}{F_{core}}
\newcommand{\coretreegroupoid}{\mathcal{T}_{\text{core}}}
\newcommand{\Hill}{Hill}
\newcommand{\Blumberg}{Blumberg}
\newcommand{\Barata}{Barata}
\newcommand{\Moerdijk}{Moerdijk}
\newcommand{\Brinkmeier}{Brinkmeier}
\newcommand{\Dunn}{Dunn}
\newcommand{\May}{May}
\newcommand{\Guillou}{Guillou}
\newcommand{\Merling}{Merling}
\newcommand{\Osorno}{Osorno}
\newcommand{\Batanin}{Batanin}
\newcommand{\Boardman}{Boardman}
\newcommand{\Vogt}{Vogt}
\NewVariableClass\MyVar[output=\MyVar]
\MyVar\va {a}
\MyVar\vb {b}
\MyVar\vc {c}
\MyVar\vd {d}
\MyVar\ve {e}
\MyVar\vf {f}
\MyVar\vg {g}
\MyVar\vh {h}
\MyVar\vi {i}
\MyVar\vj {j}
\MyVar\vk {k}
\MyVar\vl {l}
\MyVar\vm {m}
\MyVar\vn {n}
\MyVar\vo {o}
\MyVar\vp {p}
\MyVar\vq {q}
\MyVar\vr {r}
\MyVar\vs {s}
\MyVar\vt {t}
\MyVar\vu {u}
\MyVar\vv {v}
\MyVar\vw {w}
\MyVar\vx {x}
\MyVar\vy {y}
\MyVar\vz {z}
\MyVar\vA {A}
\MyVar\vB {B}
\MyVar\vC {C}
\MyVar\vD {D}
\MyVar\vE {E}
\MyVar\vF {F}
\MyVar\vG {G}
\MyVar\vH {H}
\MyVar\vI {I}
\MyVar\vJ {J}
\MyVar\vK {K}
\MyVar\vL {L}
\MyVar\vM {M}
\MyVar\vN {N}
\MyVar\vO {O}
\MyVar\vP {P}
\MyVar\vQ {Q}
\MyVar\vR {R}
\MyVar\vS {S}
\MyVar\vT {T}
\MyVar\vU {U}
\MyVar\vV {V}
\MyVar\vW {W}
\MyVar\vX {X}
\MyVar\vY {Y}
\MyVar\vZ {Z}
\MyVar\valpha {\alpha}
\MyVar\vvaralpha {\varalpha}
\MyVar\vbeta {\beta}
\MyVar\vgamma {\gamma}
\MyVar\vdelta {\delta}
\MyVar\vepsilon {\epsilon}
\MyVar\vvarepsilon {\varepsilon}
\MyVar\vzeta {\zeta}
\MyVar\veta {\eta}
\MyVar\vtheta {\theta}
\MyVar\viota {\iota}
\MyVar\vkappa {\kappa}
\MyVar\vlambda {\lambda}
\MyVar\vmu{\mu}
\MyVar\vnu{\nu}
\MyVar\vxi{\xi}
\MyVar\vpi{\pi}
\MyVar\vvarpi {\varpi}
\MyVar\vrho {\rho}
\MyVar\vsigma {\sigma}
\MyVar\vtau {\tau}
\MyVar\vupsilon {\upsilon}
\MyVar\vphi {\phi}
\MyVar\vvarphi {\varphi}
\MyVar\vchi {\chi}
\MyVar\vpsi {\psi}
\MyVar\vomega {\omega}
\MyVar\vGamma {\Gamma}
\MyVar\vDelta {\Delta}
\MyVar\vTheta {\Theta}
\MyVar\vLambda {\Lambda}
\MyVar\vXi{\Xi}
\MyVar\vPi{\Pi}
\MyVar\vSigma {\Sigma}
\MyVar\vUpsilon {\Upsilon}
\MyVar\vPhi {\Phi}
\MyVar\vPsi {\Psi}
\MyVar\vOmega {\Omega}
\MyVar\id{\mathbb{1}}
\MyVar\Orth{\operatorname{O}}
\MyVar\fOrd{\operatorname{\textbf{fOrd}}}
\MyVar\Aut{\operatorname{Aut}}
\MyVar\Hom{\operatorname{Hom}}
\MyVar\Rad{\operatorname{Rad}}
\MyVar\ar{\operatorname{ar}}
\DeclareMathOperator{\rad}{Rad}
\newcommand{\Ord}[1]{\underline{#1}}
\MyVar\LittleCubeOperad{\mathcal{C}}
\SetupClass\MyVar{
  define keys = {
  {cal}{ command = \mathcal},
  {scr}{ command = \mathscr},
  {~}{command = \widetilde},
  {oline}{command = \overline},
  {'}{upper=\prime},
  {inv}{upper=-1},
  {^}{command = \widehat{}},
  },
  define keys[1] = {
    {^}{ upper = {#1}}, 
  },
}
\begin{document}

\maketitle
\begin{abstract}
    In this paper, we generalize the \Dunn-\Brinkmeier~additivity theorem, which establishes a weak equivalence $\mathcal{C}_n \otimes \mathcal{C}_m \simeq \mathcal{C}_{n+m}$ for the little cubes operad $\mathcal{C}_n$. We introduce equivariant framed little disk operads, a new class of operads that simultaneously generalize the framed little disk operads and the little $V$-disk operads associated with a $G$-representation $V$. We prove that these operads satisfy an analogous additivity property, extending the classical theorem to settings involving group actions and framings. 
\end{abstract}
\tableofcontents
\section{Introduction}\label{sec:introduction}
In this paper we prove a generalization of \Dunn~and \Brinkmeier's classic result \cite{dunnTensorProductOperads1988, brinkmeierOperads2000} that the little cube operads are additive.
That is, there is a weak equivalence of operads \[\mathcal{C}_n\otimes \mathcal{C}_m\simeq \mathcal{C}_{n+m}\] where $\mathcal{C}_n$ is the operad of little $n$-cubes and $\otimes$ is the Boardman-Vogt tensor product of operads.

The main contribution of this paper is that we prove that, instead of cubes, the little disk operads $\mathcal{D}_n$ are additive.
To those less familiar with the tensor on operads, this may seem like a trivial change.
However, the classic tensor on operads \emph{isn't homotopical}, and the proofs of the \Dunn-\Brinkmeier~additivity theorem make use of the geometry of rectangles in a fundamental way.
In order to prove the additivity theorem for disks then requires new ideas to get around the obstructions this seemingly simple change makes. In particular, in this paper we will introduce a geometric way to compare embedding operads of different shapes, as well as develop more general conditions for when induced operad maps from the tensor $P\otimes Q\to R$ are injective.

While cubes are traditionally the preferred operad to encode $\mathbb{E}_k$-algebra structures, they tend to be ill-suited for various equivariant generalizations.
Instead, little disks are much more preferable and this work was initiated by the authors interest in these generalizations.

If we take the little disks to live in a $G$-representation $V$, then we get the $G$-operad of equivariant little $V$-disks $\mathcal{D}_V$.
These are studied, among others, by \May-\Guillou~\cite{guillouEquivariantIteratedLoop2017} and \May-\Merling-\Osorno~\cite{mayEquivariantInfiniteLoop2017} in relation to equivariant loop space theory.
When $V$ is a $G$-universe, then $\mathcal{D}_V$ are examples of what \Blumberg-\Hill~\cite{blumbergOperadicMultiplicationsEquivariant2015} call $\mathbb{N}_\infty$-operads. Blumberg-Hill's interest in these kinds of operads stems from the need that, equivariantly, these kinds of operads not only record homotopically coherent commutative algebra structures, but also a system of norm maps. One can think of these norm maps as a kind of ``twisted'' multiplication.

Besides encoding higher equivariant algebra structures, little disks also come up in other areas such as factorization homology, except this time as \emph{framed} little disks $\mathcal{D}_n^{f}$.
Disks are much more preferred in this context, even though it is possible to amend little cubes to also carry framing data as done by Dwyer-Hess-Knudsen~\cite{dwyerConfigurationSpacesProducts2018}. 
The motivation for coming up with this amendment is to use a conjectured additivity theorem \cite[Conjecture 4.18]{dwyerConfigurationSpacesProducts2018}. While we don't prove this conjecture exactly, we do prove one that is in spirit the same. We haven't verified it, but the results of our paper should be a sufficient replacement for their applications.

It turns out that extending a non-equivariant additivity result on disks to both of these generalizations is not that much harder to deal with, and we define \emph{equivariant framed little disk operads} $\mathcal{D}^\rho(V)$ where $\rho$ is a dilation representation (defined later) which carries the framing data, and $V$ an orthogonal $G$-representation of countable dimension.
The main result of this paper is the following.

\begin{theorem}\label{thm:maindiskadditiviy}
    Given $G$-representations $V$ and $W$, and dilation representations $\rho: \mathcal{G}\to \GL(V)$, and $\psi: \mathcal{H}\to \GL(W)$, there is a weak equivalence of $G$-operads \[\mathcal{D}^\rho_V\otimes \mathcal{D}^\psi_W \simeq \mathcal{D}^{\rho\times \psi}_{V\oplus W}.
    \]
\end{theorem}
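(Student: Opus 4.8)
The plan is to adapt the classical Dunn–Brinkmeier strategy, factoring the statement into two interlocking pieces: an \emph{interchange} (or \emph{additivity-of-cores}) comparison and a \emph{cofibrancy} or \emph{contractibility-of-spaces-of-embeddings} argument, all carried out $G$-equivariantly and keeping track of the dilation data $\rho$ and $\psi$. First I would construct an explicit $G$-operad map $\mu\colon\mathcal{D}^\rho_V\otimes \mathcal{D}^\psi_W\to \mathcal{D}^{\rho\times\psi}_{V\oplus W}$. On the generating pieces this should send a little $V$-disk (with $\rho$-framing) and a little $W$-disk (with $\psi$-framing) to the product disk in $V\oplus W$ carrying the product framing $\rho\times\psi$; the Boardman–Vogt tensor is the universal recipient of two operad maps whose images ``commute'', so one must check that the two canonical maps $\mathcal{D}^\rho_V\to\mathcal{D}^{\rho\times\psi}_{V\oplus W}$ and $\mathcal{D}^\psi_W\to\mathcal{D}^{\rho\times\psi}_{V\oplus W}$ do indeed interchange up to the required coherence — this is where a product/rectangle-type configuration in $V\oplus W$ is used, and where the paper's promised ``geometric way to compare embedding operads of different shapes'' will replace the rigid use of cube geometry.

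Next I would analyze $\mu$ arity by arity. In arity $0$ and $1$ the map is (equivariantly) a homeomorphism or at least an equivalence by inspection of the framing groups. For general arity $n$ the key is to show $\mu_n$ is a $G$-weak equivalence: I would filter or stratify the tensor by the combinatorial type of how the two families of disks are nested, and compare this with the analogous stratification of $\mathcal{D}^{\rho\times\psi}_{V\oplus W}(n)$; on each stratum one identifies both sides, up to $G$-homotopy, with a space built from configuration spaces of points in $V$, in $W$, and in $V\oplus W$ together with the framing fibers, which are homogeneous spaces for the dilation groups $\mathcal{G}$, $\mathcal{H}$, $\mathcal{G}\times\mathcal{H}$. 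Here the hypothesis that $V$ and $W$ are orthogonal $G$-representations of countable dimension is what makes these configuration spaces have the right equivariant homotopy type (in particular, the relevant fixed-point spaces are nonempty and highly connected), so the comparison of strata can be promoted to an equivalence on all of $n$-ary spaces by an equivariant gluing/induction-over-the-filtration argument.

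I expect the main obstacle to be exactly the point flagged in the introduction: the Boardman–Vogt tensor is not homotopical, so $\mathcal{D}^\rho_V\otimes\mathcal{D}^\psi_W$ computed on the nose need not have the homotopy type one wants, and one cannot simply quote a model-categorical cofibrancy statement as in the cube case. Concretely, proving that $\mu_n$ is injective (so that the tensor does not collapse configurations that ought to be distinct) requires the ``more general conditions for when induced operad maps $P\otimes Q\to R$ are injective'' promised in the introduction; establishing those conditions for $\mathcal{D}^\rho_V$ and $\mathcal{D}^\psi_W$, equivariantly and with framings, is the technical heart. Once injectivity is in hand, one argues that $\mu_n$ is additionally an open embedding onto a subspace that is a $G$-deformation retract of $\mathcal{D}^{\rho\times\psi}_{V\oplus W}(n)$, by an explicit isotopy that ``squares up'' any configuration in $V\oplus W$ into a nested product configuration while respecting the framings and all relevant isotropy; this isotopy is the analogue of the classical straightening of little cubes, and building it for disks without breaking equivariance is the place where I would spend most of the effort. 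Combining injectivity, the deformation retraction, and the arity-wise comparison yields that $\mu$ is a $G$-weak equivalence of $G$-operads, which is the theorem.
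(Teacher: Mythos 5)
Your opening step already fails: there is no operad map $\mu\colon\mathcal{D}^\rho_V\otimes\mathcal{D}^\psi_W\to\mathcal{D}^{\rho\times\psi}_{V\oplus W}$ of the kind you describe, because the product of a little $V$-disk and a little $W$-disk is \emph{not} a little disk in $V\oplus W$. Concretely, the would-be interchanging maps $\mathcal{D}^\rho_V\to\mathcal{D}^{\rho\times\psi}_{V\oplus W}$, $x\mapsto x\times \id_W$, do not land in the disk operad: $x\times\id_W$ is not a dilation of $V\oplus W$ (it scales the $V$- and $W$-directions differently), and its image is a product region, not a round ball. This is exactly the first obstruction the paper flags, and its solution is to enlarge the target: one introduces framed little \emph{star} operads $\mathscr{S}^{\rho\times\psi}(S)$ for star domains $S$ (here $S=B_V\times B_W$), takes the candidate map $\phi\colon\mathcal{D}^\rho_V\otimes\mathcal{D}^\psi_W\to\mathscr{S}^{\rho\times\psi}(B_V\times B_W)$, and separately proves (by a zigzag of deformation retractions between star operads of different shapes, Section 5) that $\mathscr{S}^{\rho\times\psi}(B_V\times B_W)\simeq\mathcal{D}^{\rho\times\psi}_{V\oplus W}$. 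Without some such enlargement your map $\mu$ simply does not exist, and everything downstream is moot.

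The second gap is your injectivity step. Even after correcting the target, the induced map $\phi$ is \emph{not} injective --- the paper states this explicitly, in contrast with the cube case --- so the strategy ``prove $\mu_n$ injective, then show the image is a deformation retract'' cannot be carried out as written. What the paper does instead is identify a proper subcollection $\mathscr{K}\subseteq\mathcal{D}^\rho_V\otimes\mathcal{D}^\psi_W$ (the additive core, built from the separated disk operads via the geometric divisor/common-refinement lemmas of Sections 5--6), prove that $\phi$ restricted to $\mathscr{K}$ is an embedding by a left-cancellability induction on arity, and then construct weak deformation retractions of \emph{both} the tensor and the star operad onto $\mathscr{K}$ (the tensor-side retraction being the delicate point, handled through the ``critical elements'' analysis). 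Your proposed stratification by nesting type and comparison of strata with configuration spaces is not what is done, and as sketched it does not engage the actual difficulty: controlling which elements of the non-homotopical tensor get identified under $\phi$, which is precisely what the additive core and the divisibility machinery are for.
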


To better put our proof into context, it is worthwhile to quickly review how the proof of the \Dunn-\Brinkmeier~Additivity Theorem works.
There are obvious operad maps \[\mathcal{C}_n\xrightarrow{i_1} \mathcal{C}_{n+m} \xleftarrow{i_2} \mathcal{C}_m\]
which amount to extending cubes by mapping a cube to the product of the same cube and ``identity'' cube.
These maps interchange, and we get an induced map of operads \[\mathcal{C}_n\otimes \mathcal{C}_m\to \mathcal{C}_{n+m}.
\] It is not too difficult to show that codomain deformation retracts onto the image of this map, and the hard part of the proof is then showing that this map is an embedding.

When generalizing to little disk operads $\mathcal{D}_n$, the first issue we run into is that products of disks aren't disks.
This means we don't have a straight forward induced map like \[\mathcal{D}_V^\rho\otimes \mathcal{D}_W^\psi \to \mathcal{D}_{V\oplus W}^{\rho\times\psi}.
\] This forces us to consider more general operads than just the equivariant framed little disks $\mathcal{D}_V^\rho$, as we will need to deal with embeddings of products of disks.
Instead, in \Cref{sec:framedlittledisks}, we will define the framed equivariant little star operads $\staroperad^\rho_V(S)$, where $S$ is some open star domain centered at the origin.
Writing $B_V$ for the open unit disk of $V$ centered at the origin, we recover the little disk operads by $\mathcal{D}_V^\rho=\staroperad^\rho(B_V)$.

We will only consider star shape domains that are products of disks in this paper, but much of what we will say still holds for more general star shapes.
We have decided to limit ourselves to disks in order to make the exposition clearer - dealing with arbitrary shapes involves considering many more conditions and ad-hoc arguments to pull it together.
Disks are our desired shape anyway.

The candidate induced map for an additivity theorem is then of the form
\begin{align}
    \mathcal{D}_V^\rho\otimes \mathcal{D}^\psi_W\to\staroperad^{\rho\times \psi}(B_V\times B_W). \label{eq:candidatemap}    
\end{align}
We will show in this paper $\staroperad^{\rho\times \psi}(B_V\times B_W)$ is weakly equivalent to $\mathcal{D}_{V\oplus W}^{\rho\times \psi}$, and that the map of \Cref{eq:candidatemap} is a weak equivalence.

\subsection{Outline of the Paper}
Let us now outline the structure of this paper and the main steps of our proof.  

\Cref{sec:conventions} is primarily devoted to setting conventions used throughout the paper. In the second part we define the notion of an \emph{algebraically axial operad} which are operads determined by its ``components''. This will be a key property that we will later use to prove when induced maps from the tensor is injective.

We then move on in \Cref{sec:tensor} to quickly remind the reader about the Boardman-Vogt tensor and its main properties. We then introduce the notion of \emph{trees in superposition} to help pick out specific elements of the tensor. This material will be useful for certain constructions we use in the main proof.

We define equivariant framed little disks $\littlediskoperad^\rho_V$ and the related framed little star operads $\staroperad^\rho(S)$ in \Cref{sec:framedlittledisks}, before moving on in \Cref{sec:comparison} where we prove equivalences between these operads which generalize the well known fact that little cube operads and little disk operads are equivalent.

The proof technique that we employ in proving injectivity statements from the operad tensor was greatly influenced by a recent preprint of \Barata-\Moerdijk~\cite{barataAdditivityLittleCubes2022} which streamlined \Brinkmeier's argument \cite{brinkmeierOperads2000}. In \Cref{sec:divisibility} we define some key properties and illustrate how \Barata-\Moerdijk's argument works.

We finally arrive to the main theorem of this paper in \Cref{sec:additivity}, which is devoted to showing that the candidate map of  \Cref{eq:candidatemap} is a weak equivalence.
Unlike cubes, this map \emph{isn't injective.
}
Instead, we will identify a subcollection $\additivecore \subseteq \mathcal{D}_V^\rho\otimes \mathcal{D}^\psi_W$ which we will call the \emph{additive core}.
We will show that the induced map restricted onto $\additivecore$ is an embedding.
Once we establish this, we will show that both the domain and codomain are weakly equivalent to the additive core $\additivecore$ which then implies \Cref{thm:maindiskadditiviy}.

We also include two appendices in this paper where we put required ``elementary'' topology results. The first appendix section \Cref{appendix: correction lemmas} gives two lemmas on when maps of the form $X\times [0,\infty)\to X$ can be converted to explicit deformations $X\times [0,1]\to X$ onto a subspace. The second section \Cref{appendix:topology lemmas} we go through some needed verifications to confirm that certain injections are in fact embeddings. 

\subsection{Acknowledgments}

An earlier incomplete proof of our main result was part of the Author's thesis. We thank Mike Hill for his guidance and advice during the time that the main ideas were formed.

\section{Conventions}\label{sec:conventions}

\subsection{General Conventions}

Throughout this paper we will be mixing different kinds of groups together.
To differentiate these two cases better, we will follow the convention that \emph{finite groups} will be denoted by capital letters:
$\vG,\vH,\vK,\dots$, while a topological group will be denoted by calligraphic scripts:
$\vG[cal], \vH[cal], \vK[cal],\dots$. In fact, we will be dealing with $G$-topological groups. That is, group objects in $\gtop$, the category of topological spaces with $G$-action and equivariant maps.
We will assume throughout that topological groups are Hausdorff and paracompact. We will assume that a $G$-representation $V$ is a countable dimension orthogonal $G$-representation, and a direct sum of orthogonal representations is an orthogonal direct sum. i.e., If $\vV$ and $\vW$ are $\vG$-representations then $V\oplus W$ is such that $W$ and $W$ are orthogonal to each other.

\begin{convention}
    Since we only deal with countable dimensional vector spaces we are going to slightly restrict what we consider $\Orth{\vV}$ to be in the case when $\dim(V)=\infty$.
    In particular, given any finite subspace $W \subseteq V$, we define $\Orth{\vW}$ as usual which is topologized as a subspace of $\mathbb{R}^{\dim(W)^2}$.
    For $W \subseteq W^{\prime}$ we have the homomorphism \[\Orth{\vW} \hookrightarrow \Orth(\vW['])\] which extends maps by identity.
    We then define \[\Orth{V} = \colim_{W \subseteq V}\Orth(W).
    \]In particular, for us, all orthogonal maps are just identity outside some finite subspace.
\end{convention}

Besides linear transformations, we will also be using affine transformations. We denote by $\GA(V)$ the group of all general linear affine transformations on $V$. This has the structure of a $G$-group via conjugation. 

\subsection{Operads Indexed on Finite Ordinals}
We will be using the formulation of operads as functors on the category of finite ordinals satisfying extra conditions, following \Batanin~\cite{bataninEckmannHiltonArgumentHigher2008}. In particular, we denote the category of finite ordinals by $\fOrd$. This is the category spanned by the sets \[\Ord{n}=\set{1,2,\dots,n},\] including the object $\Ord{0}=\emptyset$, with morphisms given by any set map. The subcategory of $\fOrd$ of all objects with bijections as the morphisms will be denoted by $\Sigma$. A \emph{left symmetric $G$-sequence} is then a functor \[X: \Sigma \to \gtop\] and a morphism of left symmetric $G$-sequences is just a natural transformation of the functor. We will denote the category of left symmetric $G$-sequences by $\gsym$.

\begin{convention}
	Whenever we have a subset $A \subseteq \Ord{n}$, unless we say otherwise, we will put the induced order on the elements of $A$ and then identify it with the unique ordinal $\Ord{k}$ where $k=\abs{A}$. If we need to be explicit about the elements of $A$, we will denote the unique order preserving bijection by $\chi_A:\Ord{k}\to A$.
\end{convention}

Given a map $\alpha: \Ord{m}\to \Ord{n}$ and $i\in \Ord{n}$, we will denote the fiber over $i$ by $\Ord{m_i}$. i.e., the pullback
$$\begin{tikzcd}
	\Ord{m_i}\ar[r]\ar[d]\ar[rd,phantom,"\lrcorner" pos = 0.01] & \set{i}\ar[d] \\ \Ord{m}\ar[r,"\alpha"] & \Ord{n}
\end{tikzcd}$$

\begin{definition}
	A $G$-operad $P$ is a left symmetric $G$-sequence $P$ where we have a unit map \[\id: \ast \to P(1),\] and for each map $\valpha : \Ord{m}\to \Ord{n}$ we have a composition morphism \[\gamma_P(\alpha): P(n)\times P(m_1)\times P(m_2)\times \cdots \times P(m_n)\to P(m)\] that satisfy associative and unital conditions. A morphism of $G$-operads is a morphism of symmetric $G$-sequences which commutes with the operad structure. We will denote the category of $G$-operads by $\goperad$. A \emph{reduced} operad $P$ is one where $P(0)=\ast$.
	We will denote the subcategory of reduced $G$-operads by $\groperad$.
\end{definition}

Note that the composition maps $\gamma_\alpha$ must be defined for all morphisms of $\fOrd$ and not just order preserving maps. 

\begin{remark}{}{}
	We have used the convention of left actions for our operads, which is different from the usual right action convention.
	We do this to follow the convention in \cite{blumbergOperadicMultiplicationsEquivariant2015}.
	The advantage of this convention is that it puts the actions by $\Sigma$ and $G$ on the left, so we can think of each space $P(n)$ as a $\Sigma_n\times G$-space.
\end{remark}

We will often work with equations of operad elements.
In order to make our notation less cumbersome when doing so, we will now set up some notation to ease how we talk about them. In general, given an operad $P$, we will write $x\in P$ and call $x$ an \emph{element of $P$} when $x\in P(n)$ for some $n$, We will also set \[\ar(x):= \Ord{n},\]
and call $\ar(x)$ the \emph{indexing set or ordinal} of $x$. We will also say that an element $x\in P$ \emph{has $n$-arity}.
We will also use $P_{\leq k}$ and $P_{<k}$ to mean all elements with arity less than or equal $k$, and less than $k$ respectively.

Given a composition map \[\gamma_P(\alpha): P(n)\times \left(\bigtimes_{j\in n}P(m_j)\right)\to P(m),\] for $x\in P(n)$ and $y^j\in P(m_j)$ we will write the element-wise composition by \[x\underset{\alpha}{\circ} (y^j)_{j\in\ar(x)} := \gamma_P(\alpha)(x,(y^j)_{j\in\ar(x)}).\]
We will rarely write the map $\alpha$ in our composition notation as it usually implied by the order. Explicitly, suppose we have $x\in P(n)$ and $y^j\in P(m_j)$. We put the order on $\sqcup_{j\in \Ord{n}}\Ord{m_j}$ given by $a\leq b$ if either $a,b\in \Ord{m_j}$ and $a\leq b$ or if $a\in \Ord{m_j}$, $b\in\Ord{m_{\vj[']}}$ then $j\leq \vj[']$. There is then a unique order preserving map \[\beta:\Ord{m}\to \sqcup_{j\in \Ord{n}}\Ord{m_j} = \Ord{m_1+m_2+\dots + m_n}\] such that the fibers are $\Ord{m_j}$. The composition $x\circ (y^j)_{j\in\ar{x}}$ then refers to the composition with respect to the map $\beta$. While we omitted the associativity and unital conditions above, let us write down these conditions here in this notation for future reference.

Given $x,y^j,z^k\in P$, the associativity condition is 
\begin{align}
	x\circ \pbr{y^j\circ \pbr{z^k}_{k\in\ar{y^j}}}_{j\in\ar{x}}	= \pbr[\Big]{x\circ \pbr{y^j}_{j\in\ar{x}}}\circ \pbr{z^k}_{k\in \ar{x\circ (y^j)_{j\in\ar{x}}}}
\end{align} where $\ar{x\circ (y^j)_{j\in\ar{x}}}=\sqcup_{j\in\ar{x}}\ar{y^j}$. The unital conditions are given by 
\begin{align}
	x\circ (\id)_{i\in\ar{x}} = x = \id \circ x.
\end{align}
Lastly, the composition maps imply the usual equivariance conditions. Given $\sigma\in\vSigma[\ar{x}]$, and $\vtau_j\in\vSigma[\ar{y^j}]$, we will abuse notation and write $\sigma$ for $P(\sigma)$ and similarly for $\tau_j$ for $\vP{\tau_j}$. We write $\vsigma{\ar{y^j}}$ for the induced permutation on $\ar{x\circ (y^j)}$ from $\sigma$ and $\oplus_{i}\tau_j$ for the block sum permutation. The equivariance conditions can then be stated by 
\begin{align}
	(\sigma x)\circ (y^j)_{j\in\ar{x}} &= \vsigma{\ar{y^j}}\pbr[\Big]{x\circ (y^{\sigma{j}})_{j\in\ar{x}}} \\ 
	x\circ (\tau_jy^j)_{j\in\ar{x}} &= \pbr[\Big]{\oplus_{i}\tau_j}\pbr[\Big]{x\circ (y^j)_{j\in\ar{x}}}.
\end{align}

\subsection{Components of Reduced Operads}
Given a reduced operad $P$, element $x\in P$ and $i\in \ar(x)$, we have the ``components'' $x_i\in P(1)$ which are given by \[x_i := x\circ (\delta_i^j)_{j\in \ar(x)}\] where \[\delta^j_i := \begin{cases*}\ast & if $j\neq i$ \\ \id & if $j=i$.\end{cases*}\]

\begin{example}
	In the case of the little cube operads $\LittleCubeOperad[n]$, given $x=(c_1,c_2,\dots c_n)\in \LittleCubeOperad[n]$, the components of $x$ are exactly the components of the tuple $c_i$.
\end{example}

We will be interested in operads that whose elements are determined by their components. Our choice of name for the following definition is due to the similarity to \emph{axial operads} as defined in \cite{fiedorowiczAdditivityTheoremInterchange2015}

\begin{definition}\label{def:axial}
	An operad $P$ is \emph{algebraically axial} if it is reduced and for all $\Ord{n}\in \fOrd$, the component tuple maps \[P(n)\to \prod_{i\in I}P(\Ord{n})\] are injective.
\end{definition}

In particular, elements are determined by their components in such operads.
Generalizing this slightly, given $x\in P$ and $A \subseteq \ar(x)$, we will write \[\delta_A^j :
	= \begin{cases*}
		\ast & if $j \notin A$ \\ \id & if $j\in A$,
	\end{cases*}\] and then set $x_A := x\circ (\delta_A^j)_{j\in \ar(x)}
$.
The following lemma is straightforward.
\begin{lemma}{}{}
	Given an algebraically axial operad $P$, an element $x\in P$, and a partition $\set{U_i}$ of $\ar(x)$, then the map \[P(\ar(x))\to \prod_{i}P(U_i)\] given by $x \mapsto (x_{U_i})_i$ is injective.
	In particular, for $x,y$ such that $\ar(y)=\ar(x)$ we have that $x=y$ if and only if $x_{U_i}=y_{U_i}$ for all $i$.
\end{lemma}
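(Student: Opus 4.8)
The plan is to reduce the statement about an arbitrary partition to the binary partition case handled by \Cref{def:axial}, and then to bootstrap to arbitrary partitions by a combination of induction and the operad axioms. First I would observe that the component $x_i$ of \Cref{def:axial} is a special case of $x_A$ for the singleton $A=\set{i}$, and more generally that $x_A$ is itself an element of $P(A)\cong P(\Ord{\abs{A}})$ whose own components recover the relevant $x_i$: concretely, $(x_A)_i = x_i$ for $i\in A$ by associativity of operad composition applied to $x\circ(\delta_A^j)_j$ followed by inserting the $\delta$'s for the singleton, since composing $\ast$ with anything is $\ast$ and composing $\id$ with $\id$ is $\id$. This says the assignment $x\mapsto x_A$ is compatible with further taking components, which is the structural fact that makes the induction go through.

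Next I would set up the main argument. Given a partition $\set{U_i}_{i\in \Ord{r}}$ of $\ar(x)=\Ord{n}$, I want to show $x\mapsto (x_{U_i})_i$ is injective. The cleanest route is: suppose $x,y\in P(\Ord{n})$ with $x_{U_i}=y_{U_i}$ for every $i$. By \Cref{def:axial} it suffices to prove $x_j = y_j$ for every $j\in\Ord{n}$. But each $j$ lies in exactly one block $U_{i(j)}$, and by the compatibility observed above, $x_j = (x_{U_{i(j)}})_j$ and $y_j = (y_{U_{i(j)}})_j$; since $x_{U_{i(j)}} = y_{U_{i(j)}}$ by hypothesis, these agree, so $x_j=y_j$ for all $j$, hence $x=y$. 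This avoids any explicit induction entirely — the key is just the two-step factorization of a singleton component through a block component. The ``in particular'' clause is then the literal restatement of injectivity: $x=y$ iff $x_{U_i}=y_{U_i}$ for all $i$, where the forward direction is functoriality of the composition maps.

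The one point requiring a little care — and the place I expect the only real (if minor) friction — is the bookkeeping identity $(x_A)_i = x_i$ for $i\in A$, i.e. verifying that the two successive collapses compose to the single collapse. This is an instance of the associativity axiom
\begin{align}
	x\circ \pbr{\delta_A^j \circ (\delta_{\set{i}}^k)_{k}}_{j} = \pbr{x\circ (\delta_A^j)_j}\circ (\delta_{\set i}^k)_k,
\end{align}
together with the elementary facts that $\ast$ composed with any tuple of arity-one elements is again $\ast$ (using $P(0)=\ast$ and reducedness), and $\id\circ\id=\id$, so that $\delta_A^j\circ(\delta_{\set i}^k)_k = \delta_{\set i \cap A}^j = \delta_{\set i}^j$ when $i\in A$. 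Once this is in hand the rest is immediate. I would also remark that strictly one should be careful that $x_A$ is being regarded inside $P(\Ord{\abs{A}})$ via the order-preserving identification $\chi_A$, but this is exactly the standing convention already fixed in the text, so no extra argument is needed.
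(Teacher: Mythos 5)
Your proposal is correct and is precisely the ``straightforward'' argument the paper has in mind (the paper states the lemma without proof): reduce to singleton components via the compatibility $(x_A)_j = x_j$ for $j\in A$, which follows from associativity together with reducedness exactly as you verify, and then invoke the defining injectivity of the component tuple map. No gaps; the only caveat you flag (the order-preserving identification of $A$ with an ordinal) is indeed already covered by the paper's standing convention.
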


An operad being algebraically axial is related to that operad being built from a monoid. To make this relationship precise, let us review a useful adjunction.

\begin{definition}
	Given a topological $G$-monoid $A$, we can construct a reduced $G$-operad $\mathscr{O}(A)$ as follows.
	The $n$-ary components are given by \[\mathscr{O}(A)(n)= \prod_{j\in \Ord{n}} A,\] and the composition is given by distributing the multiplication across the elements.
	That is, for a map of finite ordinals $\alpha:
		\Ord{m}\to \Ord{n}$, the composition is
	\begin{align*}
		\gamma(\alpha): \mathscr{O}(A)(n)\times  \prod_{j\in \Ord{n}} \mathscr{O}(A)(\Ord{m_j}) & \to \mathscr{O}(A)(\Ord{m}) \\ \left((a_j)_{j\in \Ord{n}},((b_k)_{k\in \Ord{m_j}})_{j\in \Ord{n}}\right)&\mapsto (a_{\alpha(k)}b_k)_{k\in \Ord{m}}.
	\end{align*}
\end{definition}
The following is due to Igusa~\cite{igusaAlgebraicKtheoryA_1982}; See also \cite[Lemma 7.2]{fiedorowiczAdditivityTheoremInterchange2015}.
\begin{theorem}{}{}
	The construction above is functorial and has a left adjoint given by taking the space indexed by $\Ord{1}$.
	\begin{equation*}
		\begin{tikzcd}
			\groperad & {\operatorname{Mon}(\gtop)}
			\arrow[""{name=0, anchor=center, inner sep=0}, "{\mathscr{O}}", curve={height=-18pt}, from=1-2, to=1-1]
			\arrow[""{name=1, anchor=center, inner sep=0}, "{(\cdot)(\Ord{1})}", curve={height=-18pt}, from=1-1, to=1-2]
			\arrow["\dashv"{anchor=center, rotate=-90}, draw=none, from=1, to=0]
		\end{tikzcd}
	\end{equation*}
\end{theorem}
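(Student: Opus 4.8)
The plan is to exhibit an explicit natural bijection
\[
\operatorname{Hom}_{\operatorname{Mon}(\gtop)}\big(P(\Ord{1}),A\big)\;\xrightarrow{\ \cong\ }\;\operatorname{Hom}_{\groperad}\big(P,\mathscr{O}(A)\big),
\]
from which the asserted adjunction follows, and to read off functoriality of $\mathscr{O}$ along the way. First I would check the two functors are well defined. For a homomorphism of topological $G$-monoids $f\colon A\to B$, applying $f$ in every coordinate gives $\mathscr{O}(f)\colon\mathscr{O}(A)\to\mathscr{O}(B)$, and compatibility with the structure maps $\gamma$ is immediate from the defining formula $\big((a_j)_j,((b_k)_k)_j\big)\mapsto(a_{\alpha(k)}b_k)_k$ together with multiplicativity and unitality of $f$; functoriality and $G$-equivariance are then formal, and $\mathscr{O}$ lands in $\groperad$ since $\mathscr{O}(A)(0)$ is a point. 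In the other direction, for a reduced $G$-operad $P$ the space $P(\Ord{1})$ is a $G$-monoid with product $a\cdot b:=a\circ(b)$ and unit $\id$, the monoid axioms being instances of the operad associativity and unitality conditions; a morphism of $G$-operads restricts on the $\Ord{1}$-component to a morphism of $G$-monoids, so $(\cdot)(\Ord{1})\colon\groperad\to\operatorname{Mon}(\gtop)$ is a functor.

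The engine of the whole argument is the following component identity, valid in any reduced operad $P$. If $\alpha\colon\Ord{m}\to\Ord{n}$, if $x\in P(n)$ and $y^{j}\in P(m_j)$, and if $k\in\Ord{m}$ lies in the fiber over $j_0:=\alpha(k)$ with $k'$ denoting its position inside that fiber $\Ord{m_{j_0}}$, then
\[
\big(x\underset{\alpha}{\circ}(y^{j})_{j}\big)_{k}\;=\;x_{j_0}\cdot(y^{j_0})_{k'}\qquad\text{in }P(\Ord{1}).
\]
In words, on components operad composition degenerates to the monoid multiplication in $P(\Ord{1})$. This is proved by evaluating $\big(x\circ(y^{j})_{j}\big)\circ(\delta_{k}^{l})_{l\in\Ord{m}}$ with associativity: for $j\neq j_0$ the inner composite $y^{j}\circ(\ast)_{l}$ lies in $P(0)=\ast$, while for $j=j_0$ it equals $(y^{j_0})_{k'}$; a second application of associativity and unitality then collapses $x\circ(\ast,\dots,(y^{j_0})_{k'},\dots,\ast)$ to $x_{j_0}\circ\big((y^{j_0})_{k'}\big)$. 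Reducedness of $P$ is used precisely to discard the $P(0)$-slots. Applying the same identity inside $\mathscr{O}(A)$ shows that the $j$-th component of a tuple $(a_i)_i\in\mathscr{O}(A)(n)$ is simply its $j$-th coordinate $a_j$.

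With this in hand I would define, for a $G$-monoid homomorphism $g\colon P(\Ord{1})\to A$, the map $\widehat{g}\colon P\to\mathscr{O}(A)$ by $\widehat{g}(x):=\big(g(x_j)\big)_{j\in\ar(x)}$. It is a morphism of symmetric $G$-sequences because permutations act by permuting components and $g$ is equivariant; it preserves the unit since $g$ does; and it commutes with $\gamma$ because, by the component identity, the $k$-th coordinate of $\widehat{g}\big(x\circ(y^{j})_{j}\big)$ is $g\big(x_{j_0}\cdot(y^{j_0})_{k'}\big)=g(x_{j_0})\,g\big((y^{j_0})_{k'}\big)$, which is exactly the $k$-th coordinate of $\widehat{g}(x)\underset{\alpha}{\circ}(\widehat{g}(y^{j}))_{j}$ as computed by the defining formula of $\mathscr{O}(A)$. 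In the reverse direction an operad map $\phi\colon P\to\mathscr{O}(A)$ restricts to the monoid map $\phi(\Ord{1})\colon P(\Ord{1})\to\mathscr{O}(A)(\Ord{1})=A$. These assignments are mutually inverse: for $x\in P(1)$ one has $x_1=x$, so $\widehat{g}(\Ord{1})=g$; conversely, writing $g:=\phi(\Ord{1})$, for $x\in P(n)$ we have $g(x_j)=\phi(x_j)$ since $x_j\in P(1)$, and since $\phi$ is an operad map (preserving the unit and the unique $0$-ary element) the component computation inside $\mathscr{O}(A)$ gives $\phi(x_j)=\phi(x)_j=$ the $j$-th coordinate of $\phi(x)$, so $\widehat{g}(x)=\phi(x)$. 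Naturality in $A$ and in $P$ is read directly off these formulas; one may also note in passing that every $\mathscr{O}(A)$ is algebraically axial, its component tuple map being the identity.

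I do not expect a genuine obstacle here: once the component identity is set up, everything is bookkeeping. The one point requiring care is that the identification of a fiber $\Ord{m_{j_0}}$ with an ordinal (the paper's convention for subsets of $\Ord{m}$) must be the \emph{same} one used both in the component formula for $P$ and in the composition formula defining $\mathscr{O}(A)$, so that $\widehat{g}$ respects $\gamma$ coordinate by coordinate; with that matched, the verification reduces to the single equation $g(a\cdot b)=g(a)\,g(b)$.
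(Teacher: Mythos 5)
Your argument is correct, and it is complete in all essentials: the hom-set bijection $\operatorname{Hom}_{\operatorname{Mon}(\gtop)}\big(P(\Ord{1}),A\big)\cong\operatorname{Hom}_{\groperad}\big(P,\mathscr{O}(A)\big)$ exhibits $(\cdot)(\Ord{1})$ as left adjoint to $\mathscr{O}$, which is exactly the direction asserted in the statement and is consistent with the unit $\mu_P\colon P\to\mathscr{O}(P(\Ord{1}))$ used later in the paper. Note, however, that the paper itself gives no proof of this theorem: it is quoted with a citation to Igusa and to Fiedorowicz--Vogt (Lemma 7.2 there), so there is no internal argument to compare against. Your write-up is essentially the standard proof those references give, and its real content is the component identity $\big(x\underset{\alpha}{\circ}(y^{j})_{j}\big)_{k}=x_{\alpha(k)}\cdot(y^{\alpha(k)})_{k'}$, which you state and justify correctly using associativity, unitality, and reducedness (to discard the $P(0)$-slots); once that is in place, both the fact that $\widehat{g}(x)=(g(x_j))_{j}$ respects $\gamma$ and the inverse verification $\phi(x)_j=\phi(x_j)$ are the bookkeeping you describe. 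The only points you leave implicit --- continuity and $G$-equivariance of $\widehat{g}$ (using that $\id$ and the nullary point are $G$-fixed), and the coordinate-permutation $\Sigma$-action on $\mathscr{O}(A)$ needed for $\widehat{g}$ to be a map of symmetric sequences --- are routine and do not constitute gaps, and your closing caveat about using one fixed identification of fibers with ordinals on both sides is exactly the right point of care.
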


Using this adjunction, another way to phrase \cref{def:axial} is that algebraically axial operads are precisely the operads $P$ such that the unit map of this adjunction on $P$
\[\mu_P : P\to \mathscr{O}(P(\Ord{1}))\] is injective.

\section{Boardman-Vogt Tensor and Trees in Superposition}\label{sec:tensor}
\subsection{The \Boardman-\Vogt~Tensor of Operads}
The \Boardman-\Vogt~tensor $P\otimes Q$ of operads $P$ and $Q$ is the universal construction that encodes ``interchanging'' $P$ and $Q$-algebra structures. 
We will assume here that the reader has some experience with the tensor, and use this section to primarily set notation and conventions. The original construction of \Boardman-\Vogt~can be found in \cite{boardmanHomotopyInvariantAlgebraic1973}, however this uses the language of PROPs. For a more modern treatment of the \Boardman-\Vogt~tensor, we refer the reader to \cite{heutsSimplicialDendroidalHomotopy2022}. The author's thesis~\cite{szczesnyAdditivityEquivariantOperads2023} also contains a more categorical flavored construction of the tensor.

For two ordinals $\Ord{n}$, and $\Ord{m}$, we will interpret the product $\Ord{m}\times \Ord{n}$ to be the ordinal $\Ord{mn}$ via the lexicographical ordering. The bijection that swaps factors of a product then corresponds to a permutation on $\Ord{mn}$ which we denote by $\tau_{m,n}$. i.e., the bijection that makes the following commute.
$$\begin{tikzcd}
	\Ord{m}\times \Ord{n} \ar[d,"\text{lex}"]\ar[r,"\text{swap}"] & \Ord{n}\times \Ord{m} \ar[d,"\text{lex}"]\\ 
	\Ord{mn}\ar[r,"\tau_{m,n}"] & \Ord{mn}.
\end{tikzcd}$$

Given two operads $P$ and $Q$, the tensor $P\otimes Q$ is then constructed by quotienting out the \emph{interchange relations} in the coproduct of operads $P\sqcup Q$. In particular, suppose we have two operads $P$ and $Q$, then in the coproduct $P\sqcup Q$ we have the following (non-commutative) diagram.

\[\begin{tikzcd}
	P(m)\times Q(n)\ar[r]\ar[d,"\text{swap}"] & \pbr{P\sqcup Q}(mn) \\
	Q(n)\times P(m)\ar[r] & \pbr{P\sqcup Q}(mn) \ar[u,"\tau_{n,m}"]
\end{tikzcd}\]
where the horizontal arrows are given by the compositions \[(p,q)\mapsto p\circ (q)_{i\in \ar{p}}.\]
i.e., for each $p\in P(m), q\in Q(n)$, we have two elements \[p\circ (q)_{i\in\ar{p}},\text{ and } \tau_{n,m}\pbr{q\circ (p)_{j\in\ar{q}}}\] in the coproduct $P\sqcup Q$. The interchange relations are then given by \[p\circ (q)_{i\in\ar{p}} \sim \tau_{n,m}\pbr{q\circ (p)_{j\in\ar{q}}}.\]
We will use the notation $p\otimes q$ to be the element in the tensor $P\otimes Q$ represented by $p\circ (q)_{i\in\ar(p)}$. We then have that \[p\otimes q = \tau_{n,m}(q\otimes p)\] in the tensor.

The Boardman-Vogt tensor satisfies a universal property.
Given operadic maps $f:P\to R$ and $g: Q\to R$, we say that the maps $f$ and $g$ \emph{interchange} if the following diagram commutes.

\[\begin{tikzcd}
	P(m)\times Q(n)\rar["\text{swap}"]\dar["\id\times \Delta"']& Q(n)\times P(m)\dar["\id\times \Delta"] \\ 
	P(m)\times \prod_{j\in \Ord{m}}Q(n)\dar["f\times (g)^m"']& Q(n)\times \prod_{k\in \Ord{n}}P(m)\dar["g\times (f)^n"] \\
	R(m)\times \prod_{j\in \Ord{m}}R(n)\dar["\gamma_R"']& R(n)\times \prod_{k\in \Ord{n}}R(m) \dar["\gamma_R"]\\ 
	R(mn)\rar["R(\tau_{m,n})"]& R(mn)
\end{tikzcd}\]

In which case, the universal property of the Boardman-Vogt tensor is the following.
\begin{theorem}
	Suppose $f:P\to R$ and $g:Q\to R$ are interchanging morphisms of $G$-operads. Then there exists unique $G$-operad morphisms $i_P,i_Q,$ and $f\otimes g$ that make the following diagram commute.
	\[\begin{tikzcd}
		P \rar["i_P"]\drar["f"']& P\otimes Q \dar["f\otimes g"]& Q\ar[l,"i_Q"']\ar[ld,"g"] \\
		&R&
	\end{tikzcd}\]
\end{theorem}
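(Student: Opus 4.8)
The plan is to obtain $f\otimes g$ by descending along the two-step construction of the tensor: first produce a morphism out of the coproduct $P\sqcup Q$ using its universal property, and then check that this morphism collapses the interchange relations, so that it factors through the quotient $P\otimes Q$. Recall that $P\otimes Q$ is the quotient of $P\sqcup Q$ by the operadic congruence generated by the relations $p\circ(q)_{i\in\ar(p)}\sim \tau_{n,m}\pbr{q\circ(p)_{j\in\ar(q)}}$; write $\iota_P\colon P\to P\sqcup Q$ and $\iota_Q\colon Q\to P\sqcup Q$ for the coproduct inclusions and $\pi\colon P\sqcup Q\to P\otimes Q$ for the quotient map. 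Define $i_P:=\pi\circ\iota_P$ and $i_Q:=\pi\circ\iota_Q$. These are the canonical structure maps of the tensor and in particular do not depend on $f$ or $g$, so the real content of the theorem is the existence and uniqueness of $f\otimes g$.

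By the universal property of the coproduct in $\goperad$, the pair $(f,g)$ induces a unique $G$-operad morphism $h\colon P\sqcup Q\to R$ with $h\circ\iota_P=f$ and $h\circ\iota_Q=g$. The key step is to verify that $h$ identifies the two sides of each interchange relation. Since $h$ is an operad morphism restricting to $f$ on $P$ and $g$ on $Q$, for $p\in P(m)$ and $q\in Q(n)$ we compute $h\pbr{p\circ(q)_{i\in\ar(p)}}=f(p)\circ\pbr{g(q)}_{i\in\ar(p)}$, and, using equivariance of $h$ together with $h$ commuting with the operadic composition $\gamma_R$, $h\pbr{\tau_{n,m}\pbr{q\circ(p)_{j\in\ar(q)}}}=R(\tau_{n,m})\pbr{g(q)\circ\pbr{f(p)}_{j\in\ar(q)}}$. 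These two elements of $R(mn)$ agree precisely because the interchange square for $f$ and $g$ commutes: chasing $(p,q)$ down the left column and across the bottom gives the first expression, while chasing it across the top and down the right column gives the second, and commutativity of that square is exactly the hypothesis that $f$ and $g$ interchange (modulo keeping track of $\tau_{n,m}$ versus $\tau_{m,n}$, which are mutually inverse). Hence $h$ respects the generating interchange relations, and therefore respects the full operadic congruence they generate, so it factors uniquely as $h=(f\otimes g)\circ\pi$ for a unique $G$-operad morphism $f\otimes g\colon P\otimes Q\to R$. Then $(f\otimes g)\circ i_P=(f\otimes g)\circ\pi\circ\iota_P=h\circ\iota_P=f$ and likewise $(f\otimes g)\circ i_Q=g$, so both triangles in the diagram commute.

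For uniqueness, observe that the images of $\iota_P$ and $\iota_Q$ generate $P\sqcup Q$ as a $G$-operad and $\pi$ is surjective, so the images of $i_P$ and $i_Q$ generate $P\otimes Q$; consequently any $G$-operad morphism out of $P\otimes Q$ is determined by its restrictions along $i_P$ and $i_Q$, and in particular $f\otimes g$ is the unique morphism with the stated property. The only point that requires genuine care is the middle step — checking that $h$ collapses the interchange relations — which amounts to carefully unwinding the commuting interchange square and bookkeeping the permutation $\tau_{n,m}$ and the equivariance of $h$; everything else is a formal consequence of the universal properties of the coproduct and of a quotient by an operadic congruence.
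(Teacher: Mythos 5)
Your proposal is correct: the paper states this universal property without proof (it is quoted as standard background on the Boardman--Vogt tensor, with references to Boardman--Vogt, Heuts--Moerdijk, and the author's thesis), and your argument is exactly the standard one those sources give — descend from the coproduct $P\sqcup Q$ via its universal property, check that the induced map collapses the generating interchange relations (your bookkeeping of $\tau_{m,n}$ versus $\tau_{n,m}$ being mutually inverse is the right way to reconcile the relation $p\circ(q)_i \sim \tau_{n,m}(q\circ(p)_j)$ with the commuting interchange square), and conclude uniqueness because the images of $i_P$ and $i_Q$ generate the quotient. No gaps; this is the proof the paper implicitly relies on.
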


\subsection{Trees in Superposition}

The structure of the operad tensor can be very complicated.
The proof of our main theorem will require us to delve into this structure very explicitly, and we will need some constructions in order to more concretely talk about elements inside it. In an operad, compositions of elements are best talked about via tree diagrams where elements are associated to vertices of the tree and the edges tell us how they are composed together. We will extend this idea to tensors.

Our starting observation is that in the tensor $P\otimes Q$, we can write any element as iterated compositions of ``simple tensors'' $p\otimes q$ where $p\in P$ and $q\in Q$.
For instance, given $p\circ (q^i)_{i\in \ar(p)}$, this can be written as \[p\circ (q^i)_{i\in \ar(p)} = (p\otimes \id_Q)\circ (\id_P\otimes q^i)_{i\in\ar(p)}.\]

This suggests that we want to associate simple tensors to vertices. In order to do this, we will extend the idea of trees as follows. First, we will interpret for a tree $T$ that the set of vertices $V(T)$ and edges $E(T)$ are objects of $\fOrd$.

\begin{definition}{}{}
	A tree in superposition is the data $(T,\chi_W^T,\chi_B^T, \xi^T)$ where:
	\begin{enumerate}
		\item $T$ is a rooted tree.
		\item Viewing the vertex set $V(T)$ as a discrete category, we have the \emph{white and black incoming edge label functors}.
		      \begin{align*}
			      \chi_W:V(T) & \longrightarrow \fOrd \\
			      \chi_B:V(T) & \longrightarrow \fOrd
		      \end{align*}
		      For $v\in V(T)$, we call $\chi_W(v)$ the \emph{white incoming edge labels} and $\chi_B(v)$ the \emph{black incoming edge labels}.
		\item For $v\in\vV{T}$, we write $\vertexinput(v)$ for the set of incoming edges. There is a natural isomorphism \[\xi: \chi_W\times \chi_B \overset{\cong}{\Longrightarrow} \operatorname{In}:V(T)\to \fOrd\] which we call the \emph{incoming edge labelling transformation}.
		      In particular, for each $v\in V(T)$, we have a bijection \[\xi(v):
			      \chi_W(v)\times \chi_B(v)\to \operatorname{In}(v),\] and each incoming edge is then determined by a pair of a white and black label.
	\end{enumerate}

	An isomorphism of trees in superposition $(\alpha,\gamma_W,\gamma_B):
		T\to T^{\prime}$ is the data of:
	\begin{enumerate}
		\item a tree isomorphism $\alpha: T\to T^{\prime}$,
		\item natural isomorphisms \begin{align*}
			      \gamma_W & : \chi_W\Rightarrow \chi_W^{\prime}\circ \alpha \\
			      \gamma_B & : \chi_B\Rightarrow \chi_B^{\prime}\circ \alpha
		      \end{align*}
		      such that the following commutes
		      \[\begin{tikzcd}
				      {\chi_W\times \chi_B } & \vertexinput \\
				      {(\chi^\prime_W\circ\alpha)\times (\chi^\prime_B\circ\alpha)} & {\vertexinput^{\prime}\circ\alpha}
				      \arrow["\gamma_W\times \gamma_B"',Rightarrow, from=1-1, to=2-1]
				      \arrow["\xi^{\prime}",Rightarrow, from=2-1, to=2-2]
				      \arrow["\xi",Rightarrow, from=1-1, to=1-2]
				      \arrow[Rightarrow, from=1-2, to=2-2]
			      \end{tikzcd}\]
	\end{enumerate}

	We will denote the groupoid of trees in superposition by $\supertreegroupoid$. Note, that while we are interpreting our sets to be ordinals with an order, we are using any maps between them. In particular, we are not assuming that tree isomorphisms preserve this ordering on the vertices and edges. 

	For a finite ordinal $\Ord{n}$, we will denote the fiber of the tree input functor $\treeinput:
		\supertreegroupoid\to \Sigma$ over $\Ord{n}$ by $\supertreegroupoid(n)$.
\end{definition}

\subsection{Representatives in the Tensor}

A vertex $v$ such that $\abs{\vertexinput(v)}=0$ is called a stump. Given a non-stump vertex $v$ and incoming edge labelled by $(i,j)\in \chi_W(v)\times \chi_B(W)$, if the edge $(i,j)$ isn't an input edge for the tree $T$, we will denote the descendant vertex along this edge by $d_{(i,j)}(v)$.

\begin{definition}{}{}
	A tree in superposition $T$ is \emph{reduced} if for every non-stump vertex $v$, we have that for every $i\in\chi_W(v)$, there exists $j\in \chi_B(v)$ such that $d_{(i,j)}(v)$ isn't a stump. Similarly, for every $j\in\chi_B(v)$, there exists $i\in \chi_W(v)$ such that $d_{i,j}(v)$ isn't a stump.
	A tree in superposition $T$ is \emph{proper} if it is reduced and if $v\in V(T)$ such that $\abs{\vertexinput(v)}=1$, then this incoming edge is an input edge for the tree.
\end{definition}

We will write $\propertreegroupoid$ for the full subgroupoid spanned by proper trees, and $\coretreegroupoid$ for the full subgroupoid of $\supertreegroupoid$ spanned by proper trees of height $2$ or less. Here, height is the maximal number of vertices along any path.

Given a tree $T$ in superposition with leading vertex $v$, we have the usual decomposition of trees \[T=C_v\circ (T_i)_{i\in \operatorname{In}(v)}.\] Here, $C_v$ is a corolla tree with single vertex $v$, and $\circ$ is tree grafting. For operads $P$ and $Q$, we define the following spaces inductively.
For the trivial tree with no vertices we set $F(P,Q)(\vert):
	=\ast$ and for an arbitrary tree in superposition $T$, we have \[F(P,Q)(T):= P(\chi_W(v))\times Q(\chi_B(v))\times \left(\prod_{i\in \vertexinput(v)}
	F(P,Q)(T_i)\right).
\] 

Moreover, for $\Ord{n}\in\fOrd$ and operads $P,Q$, we will write \begin{align*}
	\vF(P,Q)(n) & := \colim_{T\in\supertreegroupoid(n)}
	F(P,Q)(T)=\coprod_{[T]\in \text{iso}(\supertreegroupoid(n))}F(P,Q)(T) \\
	\propergenerators(P,Q)(n) & := \colim_{T\in\propertreegroupoid(n)}
	F(P,Q)(T)=\coprod_{[T]\in \text{iso}(\propertreegroupoid(n))}F(P,Q)(T) \\
	\coregenerators(P,Q)(n)   & := \colim_{T\in \coretreegroupoid(n)}
	F(P,Q)(T) = \coprod_{[T]\in\operatorname{iso}(\coretreegroupoid(n))}F(P,Q)(T)
\end{align*}

Moreover, we have equivariant maps defined by \begin{align*}
	q_\vert: F(P,Q)(\vert) & \to (P\otimes Q)(1)                \\
	\ast                      & \mapsto \id\shortintertext{and,}
	q_{C_v}: F(P,Q)(C_v)   & \to (P\otimes Q)(\chi_W(v)\times \chi_B(v)) \\
	(f,g)                     & \mapsto f\otimes g,
\end{align*} which we extend inductively by \[q_T=q_{C_v}\circ \left(\prod_{i\in\vertexinput(v)}q_{T_i}\right).\]

This gives us a surjective map of $G$-collections \[q: \vF(P,Q)\to P\otimes Q.\]

A consequence of the interchange relations is that this restricts to a surjective map of $G$-collections \[q:\propergenerators(P,Q)\to P\otimes Q.\]

We will also make use of the following elementary lemma. A proof of which can be found in \cite*[Proposition 4.8]{fiedorowiczAdditivityTheoremInterchange2015}

\begin{lemma}\label{lem:unaryinjection}
	For two reduced operads $P$ and $Q$, there is an isomorphism \[\left(P\otimes Q\right)(1)\cong P(1)\times Q(1).\] A consequence of this is that if we have two injective interchanging maps of reduced operads \begin{align*}
		f: P&\hookrightarrow R \\ g: Q&\hookrightarrow R
	\end{align*} such that $f(1)\cap g(1)= \set{\id}$, then the unary component of the induced map \[(f\otimes g)(1): \left(P\otimes Q\right)(1) \hookrightarrow R(1) \] is injective.
\end{lemma}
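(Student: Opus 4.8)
The plan is to establish the isomorphism $(P \otimes Q)(1) \cong P(1) \times Q(1)$ first, and then derive the injectivity statement as a formal consequence. For the isomorphism, I would use the tree-in-superposition presentation developed above. Since $P$ and $Q$ are reduced, every unary tree in superposition can be reduced: a proper tree in superposition with a single input edge must, at each vertex, have all other incoming edges be stumps, and since $P(0) = Q(0) = \ast$, filling those stumps contributes nothing new. More precisely, I would argue that the surjection $q : \propergenerators(P,Q)(1) \to (P \otimes Q)(1)$ has, as its source, only the class of the height-$\le 2$ tree (indeed, one can collapse to a single vertex after using the interchange relations and reducedness): a proper tree with a single tree-input edge and whose every vertex has exactly one non-stump branch forces, via properness, that no vertex has $\abs{\vertexinput(v)} = 1$ unless that edge is the tree input, so the tree has at most one vertex. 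Thus $(P \otimes Q)(1)$ is a quotient of $F(P,Q)(C_v)(1) = P(\Ord 1) \times Q(\Ord 1)$, and the interchange relations impose nothing on unary elements (they identify $p \otimes q$ with $\tau_{n,m}(q \otimes p)$, which for $m = n = 1$ is the trivial permutation and the relation $p \otimes q = q \otimes p$ already lives in the single bifunctor component). Conversely, the two inclusions $i_P, i_Q$ give a map $P(1) \times Q(1) \to (P \otimes Q)(1)$ via $(p,q) \mapsto i_P(p) \circ i_Q(q)$, and one checks this is inverse to the map $p \otimes q \mapsto (p,q)$; associativity and the unit axioms make both composites the identity.

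Granting the isomorphism, I would then prove the injectivity claim. Given interchanging injective maps $f : P \hookrightarrow R$ and $g : Q \hookrightarrow R$ with $f(1) \cap g(1) = \set{\id}$, the induced map $f \otimes g$ sends the element corresponding to $(p,q) \in P(1) \times Q(1)$ to $f(p) \circ_R g(q) \in R(1)$. So I need: if $f(p) \circ g(q) = f(p') \circ g(q')$ in $R(1)$, then $p = p'$ and $q = q'$. Here I would exploit that $R(1)$ is a monoid under unary composition, and that $f(1) : P(1) \to R(1)$ and $g(1) : Q(1) \to R(1)$ are injective monoid homomorphisms whose images meet only in the identity. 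The equation $f(p) g(q) = f(p') g(q')$ rearranges (using that $R(1)$ is a group? — no, only a monoid, so I must be careful) to $f(p')^{-1} f(p) = g(q') g(q)^{-1}$ only if inverses exist. Instead I would argue directly: apply the operad structure of $R$ — actually the cleanest route is to observe that an element of $R(1)$ in the image of $f \otimes g$ restricted to unary elements is $f(p)\circ g(q)$, and to recover $p$ and $q$ we use that composing with the appropriate structure maps of $R$ (or rather, using that $f$ and $g$ are operad maps into a common $R$ where the two subalgebras interchange) forces a unique factorization. The key point is the uniqueness of writing an element of $R(1)$ as $f(\text{something}) \circ g(\text{something})$, which follows from $f(1) \cap g(1) = \set\id$ together with the interchange relation in $R$: if $f(p) g(q) = f(p') g(q')$, then $g(q) = f(p)^{-1} f(p') g(q')$ is not available, so instead I would reduce to the statement that $f(1) \times g(1) \to R(1)$, $(a,b) \mapsto ab$, is injective, which is a standard fact for submonoids intersecting trivially in a setting where one of them is central or the interchange relation provides the needed commutation.

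The main obstacle I anticipate is precisely this last point: showing $f(1) \times g(1) \to R(1)$ is injective without invertibility. The interchange hypothesis on $f$ and $g$ should supply the missing commutativity — concretely, for unary elements the interchange diagram degenerates to saying $f(a) \circ g(b) = g(b) \circ f(a)$ in $R(1)$, i.e. the submonoids $f(1)$ and $g(1)$ commute elementwise inside $R(1)$. Once I have that, if $f(p) g(q) = f(p') g(q')$ then I can manipulate within the commutative submonoid generated by $f(1) \cup g(1)$; but cancellation still requires care in a mere monoid. I would handle this by not attempting cancellation at all: instead, I would note that the claim to prove is injectivity of $(f \otimes g)(1)$, and since $(f \otimes g)(1)$ factors as $(P \otimes Q)(1) \cong P(1) \times Q(1) \xrightarrow{f(1) \times g(1)} R(1) \times R(1) \xrightarrow{\text{mult}} R(1)$, and since the cited reference \cite[Proposition 4.8]{fiedorowiczAdditivityTheoremInterchange2015} already contains the argument, I would give the monoid-theoretic detail only to the extent of invoking that the two commuting submonoids with trivial intersection yield an injective multiplication map — this is where I would either cite or reprove the relevant elementary lemma about monoids, which is the genuinely load-bearing step.
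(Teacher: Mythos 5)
Two remarks, one on each half of your plan; note first that the paper itself does not prove this lemma at all --- the isomorphism $(P\otimes Q)(1)\cong P(1)\times Q(1)$ is simply quoted from \cite[Proposition 4.8]{fiedorowiczAdditivityTheoremInterchange2015} and the ``consequence'' is left implicit --- so your proposal has to be judged on whether it actually supplies the missing arguments. For the isomorphism, your collapse of unary proper trees does give surjectivity of $(p,q)\mapsto i_P(p)\circ i_Q(q)$, but the injectivity step is exactly where you wave your hands: saying that ``the interchange relations impose nothing on unary elements'' begs the question, because $P\otimes Q$ is the quotient by the operadic congruence \emph{generated} by interchange in all arities, and identifications between unary classes could a priori be forced by higher-arity relations composed with nullary elements (which reducedness makes available). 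The assignment $p\otimes q\mapsto(p,q)$ must be shown to be well defined; a clean repair is to observe that the morphisms $P\to\mathscr{O}\bigl(P(1)\times Q(1)\bigr)$, $p\mapsto\bigl((p_i,\id)\bigr)_i$, and $Q\to\mathscr{O}\bigl(P(1)\times Q(1)\bigr)$, $q\mapsto\bigl((\id,q_j)\bigr)_j$, interchange, so the universal property of the tensor produces a map $P\otimes Q\to\mathscr{O}\bigl(P(1)\times Q(1)\bigr)$ whose unary component sends $p\otimes q$ to $(p,q)$ and retracts your candidate inverse. With that added, the first half is correct and is essentially the content of the cited proposition.

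The second half has a genuine gap, and you located it yourself: the ``elementary lemma about monoids'' you hope to invoke --- that two elementwise-commuting submonoids with trivial intersection have injective multiplication map --- is false. In $(\mathbb{Z},+)$ take $A=\set{0,2,4,\dots}$ and $B=\set{0,-2,-4,\dots}$: everything commutes, $A\cap B=\set{0}$, yet $(2,-2)$ and $(4,-4)$ both map to $0$. This is not a removable technicality of your route: realizing $A$, $B$, $\mathbb{Z}$ as the unary parts of reduced operads concentrated in arities $0$ and $1$, the inclusions are injective and interchange, so the hypothesis $f(1)\cap g(1)=\set{\id}$ alone cannot yield the conclusion; some form of invertibility or unique factorization has to enter. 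What makes the statement usable where the paper applies it is that there $f(1)$ and $g(1)$ lie in subgroups of a group which are closed under inverses and meet trivially --- e.g.\ $\GA(V)\times\set{\id}$ and $\set{\id}\times\GA(W)$ inside $\GA(V\oplus W)$ --- so $f(p)\circ g(q)=f(p')\circ g(q')$ gives $f(p')^{-1}\circ f(p)=g(q')\circ g(q)^{-1}$, an element of the trivial intersection of those subgroups. So your instinct that cancellation in a bare monoid is the load-bearing step was right, but the fix is to prove (or add as a hypothesis) that group-level unique-factorization statement, not to appeal to a monoid-theoretic fact that does not exist; as written, your final step would fail.
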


\section{Equivariant Framed Little Disks}\label{sec:framedlittledisks}

One difference between the little cube operads $\mathcal{C}_n$ and little disks $\mathcal{D}_n$ -- besides the shapes -- is that the embeddings themselves are slightly different kinds of maps.
The little cubes are rectilinear embeddings while the little disks are dilations.
We will take the view point that rectilinear embeddings are just products of one dimensional dilation embeddings.
This viewpoint makes more sense in the context of $G$-representations as the natural decomposition of a $G$-representation is into irreducible representations which may not be $1$-dimensional.
The kinds of embeddings we will want to look at are products of dilation maps defined on sub $G$-representations. 

Let us set up some notation to better talk about these maps.

\begin{definition}
    Given a vector space $V$, write $\subspace(V)$ for the poset of all subspaces of $V$ under inclusion.
    A (finite orthogonal) \emph{decomposition} is a function $V_\bullet: J\to \subspace(V)$ such that $\bigoplus_{j\in J}V_j = V $ and $J$ is a finite set.
    When we need to make use of the indexing set $J$, we will sometimes say $V_\bullet^J$ is a decomposition of $V$ with the understanding that $J$ is the finite indexing set associated to it.
    The \emph{trivial decomposition} $V_{tr}$ is just one where we treat $tr$ as a singleton set and $V_{tr}=V$.
\end{definition}

We can construct a poset of decompositions of $V$, denoted by $\decomposition(V)$, where for decompositions $W_\bullet^J$ and $V_\bullet^K$ of $V$, we have $ W_\bullet^J \leq V_\bullet^K $ if for all $k\in K$, there exists $j\in J$ such that $W_j \subseteq V_k$.
That is, coarser decompositions are larger.

If $V$ is a $G$-representation, then $\decomposition(V)$ gets a $G$-action by $g\cdot V_i := gV_i$, and we will call any decomposition fixed under this action a $G$-decomposition.
i.e., each $V_i$ is a sub-$G$-representation.

\begin{remark}
    Notice that if $W_\bullet \leq V_\bullet$ and $W_\bullet$ is a $G$-decomposition, then so is $V_\bullet$.
\end{remark}

\begin{definition}
    Given a decomposition $V_\bullet^J\in \decomposition(V)$, we have a topological group \[\orth(V_\bullet) := \prod_{i\in J}\orth(V_i),\] this has the obvious action on $V$.
    When $V$ is a $G$-representation and $V_\bullet$ is a $G$-decomposition, then $\orth(V_\bullet)$ becomes a $G$-topological group where $G$ acts via conjugation. Note that this agrees with the usual orthogonal group notation as $\Orth{V_{tr}}=\Orth{V}$.

    Since we are assuming finite decompositions, we have obvious embeddings \[\Orth(V_\bullet) \hookrightarrow \Orth(V).
    \] In fact, given decompositions $V_\bullet \leq V_\bullet^{\prime}
    $ of $V$ we have embeddings\[\orth(V_\bullet)\hookrightarrow \orth(V_\bullet^{\prime}).
    \]
\end{definition}

\begin{definition}{}{}
    For a vector space $V$, and finite dimensional subspace $W\subseteq V$, we have a topological group given by \[\Lambda_{W}(V) := \set[\bigg]{\vec{w}\mapsto \alpha\vec{w} \given \alpha\in \mathbb{R}_{>0}}\subseteq \GL(W).
    \]
    For each inclusion $W\subseteq W^\prime$ there exists a continuous injective homomorphisms \begin{align*}
        \Lambda_W(V)                     & \hookrightarrow\Lambda_{W^\prime}(V)                       \\
        (\vec{w} \mapsto \alpha \vec{w}) & \mapsto (\vec{w}^\prime \mapsto \alpha \vec{w}^{\prime} ).
    \end{align*}
    We then define the topological group of \emph{dilations} on $V$ by
    \[\Lambda(V):=\colim_{W \subseteq V}\Lambda_W(V).
    \]

    For $V_\bullet\in\decomposition(V)$, we also define \[\Lambda(V_\bullet)= \prod_{i\in I}\Lambda(V_i)\]

    If $V$ is a $G$-representation and $V_\bullet$ a $G$-decomposition, then conjugation on $\Lambda(V_\bullet)$ makes sense and is just a trivial action.
    In contrast to the orthogonal groups, $\Lambda$ is a \emph{contravariant functor} with respect to decompositions.
    That is, given decompositions $V_\bullet \leq V_\bullet^{\prime}$ of $V$ then we have embeddings \[\Lambda(V_\bullet^{\prime})\hookrightarrow \Lambda(V_\bullet).
    \]
\end{definition}
We will also make use of the following submonoid of $\Lambda(V_\bullet)$.
\begin{definition}
    Let $\Lambda(V_\bullet)^0$ denote the submonoid of all elements of $\Lambda(V_\bullet)$ with all eigenvalues in the range $(0,1]$.
\end{definition}

\begin{definition}
    A \emph{pair} of $G$-decompositions $(W_\bullet,V_\bullet)$ are $G$-decompositions such that $W_\bullet \leq V_\bullet$. We define the poset of pairs $\decomposition^2(V)$ where we will set $(W_\bullet,V_\bullet) \leq (W_\bullet^{\prime},V_\bullet^{\prime})$ if we have \[W_\bullet \leq W_\bullet^{\prime}\; \text{and}\; V_\bullet^{\prime} \leq V_\bullet.\]
    Make note of the change of direction between $V_\bullet$ and $V_\bullet^{\prime}$ in this definition.
\end{definition}

\begin{definition}
    Given a pair $(W_\bullet, V_\bullet)$, we can construct the \emph{$G$-dilation group} \[\DL(W_\bullet, V_\bullet) := \orth(W_\bullet)\cdot \Lambda(V_\bullet) \subseteq \GL(V).
    \]
\end{definition}

This construction is functorial: If $(W_\bullet,V_\bullet) \leq (W_\bullet^{\prime},V_\bullet^{\prime})$ then \[\DL(W_\bullet^{\prime}, V_\bullet^{\prime})\hookrightarrow \DL(W_\bullet, V_\bullet).\]

Note we require that $W_\bullet \leq V_\bullet$ for $\DL(W_\bullet,V_\bullet)$ to be a well-defined group as in this case, the elements of $\orth(W_\bullet)$ and $\Lambda(V_\bullet)$ commute past each others.

The following is an equivariant variation of \cite[Definition 4.9]{dwyerConfigurationSpacesProducts2018}

\begin{definition}\label{def:dilationrepresentation}
    Given a topological $G$-group $\mathcal{G}$, a \emph{dilation representation structured by the pair of $G$-decompositions $(W_\bullet,V_\bullet)$} is a continuous $G$-homomorphism \[\rho: \mathcal{G}\to \DL(W_\bullet,V_\bullet)\] such that \[\im(\rho) = (\im(\rho)\cap \orth(W_\bullet))\cdot \Lambda(V_\bullet).
    \]

    If $V_\bullet = V_{tr}$, then we will say $\rho$ is \emph{spherical}.
    
\end{definition}

\begin{remark}
    \Cref{def:dilationrepresentation} doesn't reduce to the Definition 4.9 in \cite{dwyerConfigurationSpacesProducts2018}. Dwyer-Hess-Knudsen consider rotated rectilinear embeddings which to capture here would require us to consider any pair of decompositions $(W_\bullet,V_\bullet)$ and not just those where $W_\bullet\leq V_\bullet$. This small change would vastly increase the complexity of our arguments (and also force us to introduce many more conditions) and so we don't consider maps of this type in this paper. 
\end{remark}

\begin{convention}\label{ass:topologicalgroups}
    \Cref{def:dilationrepresentation} is a bit too general as stated. We will need to assume a number of extra properties about our topological groups $\vG[cal]$:
    \begin{enumerate}
        \item There exists a $G$-homomorphism \[s_\rho:\Lambda(V_\bullet)\to \mathcal{G}\] that is a partial section to $\rho$;
        \item the $G$-group $\mathcal{G}$ is paracompact Hausdorff.
    \end{enumerate}
\end{convention}

We will also need to assume the following about dilation representations.

\begin{convention}\label{filtered group conventions}
    Let $V$ be a $G$-representation and $\rho: \vG[cal]\to \DL(V_\bullet,V_\bullet^\prime)$ a dilation representation on $V$. We will assume that there exists an increasing filtration of finite dimensional $G$-representations $\set{0} \subseteq W_1 \subseteq W_2 \subseteq \dots$ with $\cup_i W_i = V$, such that for each of the $G$-groups $\vG[cal,i]^\rho$ defined by the pullbacks \[\begin{tikzcd}
        \vG[cal,i]^\rho \ar[r]\ar[d]\ar[rd,phantom," " pullback,very near start]& \DA(V_\bullet\cap W_i, V_\bullet^\prime\cap W_i) \ar[d]\\
        \vG[cal]^\rho \ar[r]& \DA(V_\bullet, V_\bullet^\prime)
    \end{tikzcd}\]
    the multiplication maps are proper maps.
\end{convention}

We can identify $\GA(V)$ as the semidirect product $\GL(V)\ltimes V$ where multiplication is given \[(f,v)\cdot (g,w) = (fg,v+f(w)),\] and the action of $\GA(V)$ on $V$ is given by \[(f,v)\cdot w = v+f(w).
\] As a subgroup, we have the affine dilation transformations $\DA(W_\bullet,V_\bullet)$ which is $\DL(W_\bullet,V_\bullet)\ltimes V$. 

We can extend this construction for general dilation representations.
For a dilation representation $\rho:\mathcal{G}\to \DL(W_\bullet,V_\bullet)$, we can take the $G$-semi-direct product $\mathcal{G}^\rho:=\mathcal{G}\ltimes_{\rho} V$.
We then have an induced \emph{affine dilation representation} \begin{align*}
    \hat{\rho}: \mathcal{G}^\rho & \to \DA(W_\bullet,V_\bullet) \\ (f,v) &\mapsto (\rho(f), v)
\end{align*}
which is a $G$-homomorphism.
The unit of $\mathcal{G}^\rho$ will be denoted by $\id_{\rho}:=(\id_{\mathcal{G}},0)$.
We will also often omit $\hat{\rho}$ and just treat elements of $\mathcal{G}^\rho$ as affine transformations of $V$.
That is, for $x\in \mathcal{G}^\rho$ we write $x(v)$ for $\hat{\rho}(x)(v)$.
We will also generally identify $\mathcal{G}$ with the copy of itself $\mathcal{G}\times \set{0} \subseteq \mathcal{G}^\rho$, so we can view the partial section $s_\rho$ as a map into $\mathcal{G}^\rho$.

\begin{definition}{}{}
    Let $V$ be a $G$-representation.
    A \emph{star domain} $S$ is a bounded open subset $S\subseteq V$ such that \begin{enumerate}
        \item $0\in S$,
        \item for all points $x\in S$ and $t\in[0,1]$, we have that $tx\in S$.
    \end{enumerate}
\end{definition}

\begin{remark}
    We don't require that star domains are $G$-invariant.
\end{remark}

\begin{definition}{}{}
    Given a dilation representation $\rho: \mathcal{G}\to \DL(W_\bullet,V_\bullet)$, and star domain $S \subseteq V$, we have the following $G$-monoids:
    \[\mathcal{G}^\rho(S):=\set{x\in \mathcal{G}^\rho\given (g\cdot x)(S) \subseteq S \text{ for all }g\in G}\]
\end{definition}

\begin{definition}{}{}
    Given a dilation representation $\rho:\mathcal{G}\to \DL(W_\bullet,V_\bullet)$ and star domain $S$ of $V$, we define the \emph{$\rho$-framed ambient little star operad of shape $S$} by \[\ambientoperad^\rho(S):=\mathscr{O}(\mathcal{G}^\rho(S)).
    \]
    We will also use $\ambientoperad^\rho := \mathscr{O}(\mathcal{G}^\rho)$, which we call the \emph{$\rho$-framed universal ambient operad}.
\end{definition}

\begin{definition}{}{}
    Given a dilation representation $\rho: \mathcal{G}\to \DL(W_\bullet,V_\bullet)$, and star domain $S$, we define the \emph{$\rho$-framed little star operad} \(\staroperad^{\rho}(S)\) as the suboperad of $\ambientoperad^{\rho}(S)$ where $x\in \ambientoperad^{\rho}(S)$ is in \(\staroperad^{\rho}(S)\) if for all $i\neq j \in \ar(x)$, and $g\in G$ we have that \[(g\cdot x_i)(S)\cap(g\cdot x_j)(S)=\emptyset.
    \]
\end{definition}

\begin{example}
    Write $B_V$ for the open unit disk centered at the origin of a $G$-representation $V$. Given the trivial group $\mathbb{1}\subseteq GL(V)$, the little star operad $\staroperad^{\mathbb{1}}(B_V)$ is then the operad of little $V$-disks $\mathcal{D}_V$.
\end{example}

\begin{example}
    The operad $\staroperad^{\operatorname{SO}(n)}(B_{\mathbb{R}^n})$ is the framed little disk operads.
\end{example}

\section{Comparisons Between Framed Disk Operads}\label{sec:comparison}
Any dilation representation $\rho: \mathcal{G}\to \DL(V_\bullet,V_\bullet^{\prime})$ uniquely splits into the product of $G$-group homomorphisms $\rho(x) =\sigma^\rho(x)\delta^\rho(x) $ where \begin{align*}
    \sigma: & \mathcal{G}\to \orth(V_\bullet) \\ \delta: &\mathcal{G}\to \Lambda(V_\bullet^{\prime}).
\end{align*}
If $J$ and $J^{\prime}$ are the indexing sets of the recompositions $V_\bullet$ and $V_\bullet^{\prime}$ respectively, then we will also use the notation \begin{align*}
    \sigma_j := \restr{\sigma}{V_j} \text{ and } & \delta_j := \restr{\delta}{V^{\prime}_j}.
\end{align*}

Given a dilation representation $\rho : \mathcal{G}\to \DL(W_\bullet, V_\bullet)$ and $V_\bullet\leq V_\bullet^{\prime}$, then we have an inclusion $i:\DL(W_\bullet, V_\bullet^{\prime})\hookrightarrow \DL(W_\bullet, V_\bullet)$.
We can pull back the dilation representation to get a closed $G$-subgroup $i^\ast \mathcal{G} \subseteq \mathcal{G}$ and a dilation representation \[i^\ast\rho:i^\ast \mathcal{G} \to \DL(W_\bullet, V_\bullet^{\prime}).
\] When $V_\bullet^{\prime}=V_{tr}$, we will write $\mathcal{G}_{tr}$ and $\rho_{tr}$ for $i^\ast \mathcal{G}$ and $i^\ast\rho$ respectively. 
The $G$-group $\mathcal{G}_{tr}$ is easily described using the function $\delta$ above: $$\mathcal{G}_{tr}= \set[\bigg]{x\in \mathcal{G}\given \delta(x)=\lambda \id_{\mathcal{G}}\text{ for some }\lambda\in (0,\infty)}.
$$

The inclusion $\vG[cal]_{tr}\to \vG[cal]$ induces a closed embedding of operads \[\staroperad^{\rho_{tr}}(S)\hookrightarrow \staroperad^{\rho}(S)\] where $S$ is some star domain of $V$. We will first show that this is a weak equivalence of $G$-operads.

\begin{lemma}
    Let $\rho: \mathcal{G}\to \DL(V_\bullet,V_\bullet^{\prime})$ be a dilation representation.
    There exists a $G$-invariant function $\lambda: \mathcal{G}\to \Lambda(V_\bullet^{\prime})^0$ such that for any $x\in \mathcal{G}$ we have that \[x\cdot (s_\rho\lambda)(x)\in \mathcal{G}_{tr}.
    \]
\end{lemma}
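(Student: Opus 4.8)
The plan is to construct $\lambda$ explicitly by inverting the dilation part of $\rho$, using the partial section $s_\rho$ to lift it back into $\mathcal{G}$, and then clipping the result into the submonoid $\Lambda(V_\bullet^\prime)^0$ so that the output actually lands where we want. Recall the canonical splitting $\rho(x) = \sigma^\rho(x)\,\delta^\rho(x)$ with $\delta: \mathcal{G}\to \Lambda(V_\bullet^\prime)$, and that $\mathcal{G}_{tr}$ is cut out by the condition that $\delta(x) = \lambda\,\id_{\mathcal{G}}$ is a global scalar, i.e.\ that all the factors $\delta_j(x) \in \Lambda(V_j^\prime)$ agree after identifying each $\Lambda(V_j^\prime)$ with $(0,\infty)$. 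So the naive choice is: let $\delta(x)$ have scalar factors $(\alpha_j(x))_{j\in J^\prime}$, pick a ``target'' scalar $\mu(x) \in (0,\infty)$, and set $\lambda(x) \in \Lambda(V_\bullet^\prime)$ to be the element whose $j$-th factor is $\mu(x)/\alpha_j(x)$. Then $\delta(x)\cdot\delta(s_\rho\lambda(x)) = \delta(x)\cdot\lambda(x)$ is the global scalar $\mu(x)$ (using that $s_\rho$ is a partial section to $\rho$, so $\delta(s_\rho(\ell)) = \ell$ for $\ell\in\Lambda(V_\bullet^\prime)$), while $\sigma^\rho(s_\rho\lambda(x)) = \id$ since $s_\rho\lambda(x)$ lies in the dilation part; hence $x\cdot(s_\rho\lambda)(x)$ has trivial-scalar $\delta$-component, i.e.\ lies in $\mathcal{G}_{tr}$.

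The two things to get right are (a) landing in $\Lambda(V_\bullet^\prime)^0$ rather than just $\Lambda(V_\bullet^\prime)$, and (b) $G$-invariance of $\lambda$. For (a): the element $\lambda(x)$ above has eigenvalues $\mu(x)/\alpha_j(x)$, and these lie in $(0,1]$ for every $j$ precisely when $\mu(x) \leq \min_j \alpha_j(x)$. So I would simply \emph{define} $\mu(x) := \min_{j\in J^\prime}\alpha_j(x) = \min_j\delta_j(x)$, the smallest of the dilation eigenvalues of $\rho(x)$; then each factor $\mu(x)/\alpha_j(x) \in (0,1]$, so $\lambda(x)\in\Lambda(V_\bullet^\prime)^0$, and this min is a continuous function of $x$ since $\delta$ is continuous and $J^\prime$ is finite. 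For (b): $G$ acts on $\mathcal{G}$ by some action, on $\Lambda(V_\bullet^\prime)$ trivially (as the excerpt notes, conjugation on $\Lambda(V_\bullet)$ is a trivial action), and $\delta$ is a $G$-homomorphism, so $\delta(g\cdot x) = g\cdot\delta(x) = \delta(x)$; hence the eigenvalue tuple $(\alpha_j(x))_j$ — and therefore $\mu(x)$ and $\lambda(x)$ — is unchanged under $x\mapsto g\cdot x$. That is exactly $G$-invariance of $\lambda$. (One subtlety: the $G$-action may permute the factors of $V_\bullet^\prime$ rather than fix each $V_j^\prime$, in which case $g$ permutes the index set $J^\prime$; but $\min_j\alpha_j$ is symmetric in $j$, so $\mu$ is still invariant, and $\lambda$ transforms by the same permutation that $G$ applies to $\Lambda(V_\bullet^\prime)$ — which is compatible with $G$-invariance in the appropriate sense. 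If the paper's convention is that $G$-decompositions have each $V_j^\prime$ individually $G$-invariant, this issue disappears entirely.)

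Putting it together: define $\lambda: \mathcal{G}\to\Lambda(V_\bullet^\prime)^0$ by sending $x$ to the element of $\prod_{j\in J^\prime}\Lambda(V_j^\prime)$ whose $j$-th coordinate is $\bigl(\min_{k}\delta_k(x)\bigr)\cdot\delta_j(x)^{-1}$. Continuity follows from continuity of $\delta$ and finiteness of $J^\prime$; $G$-invariance follows from $\delta$ being a $G$-homomorphism into a space with trivial $G$-action; and the defining property $x\cdot(s_\rho\lambda)(x)\in\mathcal{G}_{tr}$ follows from the section identity $\delta\circ s_\rho = \mathrm{id}_{\Lambda(V_\bullet^\prime)}$ together with the computation that $\delta\bigl(x\cdot(s_\rho\lambda)(x)\bigr) = \delta(x)\cdot\lambda(x)$ is the global scalar $\min_k\delta_k(x)$. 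The main obstacle — and it is a minor one — is bookkeeping the interaction between the $G$-action and the index set $J^\prime$ of the decomposition, i.e.\ making sure the ``trivial action on $\Lambda$'' claim is applied correctly; everything else is a direct computation with the splitting $\rho = \sigma^\rho\delta^\rho$ and the partial section $s_\rho$.
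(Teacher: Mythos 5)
Your proposal is correct and takes essentially the same route as the paper: the paper defines exactly the same function, $\lambda(x):=\bigoplus_{j} \frac{\min_{k}\delta_k(x)}{\delta_j(x)}\id_{V_j}$, and notes $G$-invariance because $G$ acts by conjugation, which is trivial on $\Lambda(V_\bullet^{\prime})$ since $V_\bullet^{\prime}$ is a $G$-decomposition (so your worry about $G$ permuting the factors does not arise). The only difference is that you spell out the final verification via $\delta\circ s_\rho=\mathrm{id}$ and the eigenvalue bound for landing in $\Lambda(V_\bullet^{\prime})^0$, which the paper leaves implicit.
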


\begin{proof}
    Let $J$ be the indexing set for the decomposition of $V_\bullet^{\prime}$.
    For any $x\in \mathcal{G}$ and $j\in J$, the linear transformations $\delta_j(x)$ is just scalar multiplication.
    We will abuse notation and consider $\delta_j(x)\in (0,\infty)$.
    Note that these functions are $G$-invariant as $G$ ultimately acts by conjugation and $V_\bullet^{\prime}$ is a $G$-decomposition.
    The required function is then simply constructed as \[\lambda(x):=\bigoplus_i \frac{\min_{j}\delta_j(x)}{\delta_i(x)}\id_{V_i}\]
\end{proof}

We extend the function $\lambda$ of this lemma to a homotopy by setting \begin{align*}
    \lambda: \mathcal{G}\times [0,1]\to \Lambda(V_\bullet^{\prime})^0 \\ 
    \lambda(x,t) := (1-t)\id_V + t\lambda(x)
\end{align*}

The following is then straightforward to check.

\begin{lemma}\label{spherical equivalence lemma}
    For any star domain $S$, the homotopy \begin{align*}
        H:\staroperad^\rho(S)\times [0,1]&\to \staroperad^\rho(S) \\ H((x_i),t) &:= (x_i\circ s_\rho \lambda(x_i,t))_i
    \end{align*} 
    exhibits $\staroperad^{\rho_{tr}}(S)$ as a strong deformation retract (of $G$-collections) of $\staroperad^\rho(S)$. In particular, $\staroperad^{\rho_{tr}}(S)$ is weakly equivalent to $\staroperad^{\rho}(S)$ as $G$-operads.
\end{lemma}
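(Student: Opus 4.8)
This lemma is entirely a matter of unwinding definitions: the plan is to verify that $H$ is a well-defined continuous homotopy of $G$-collections, that $H(-,0)=\mathrm{id}$, that $H(-,1)$ lands in $\staroperad^{\rho_{tr}}(S)$, and that $H$ restricts to the constant homotopy on $\staroperad^{\rho_{tr}}(S)$. Granting these, the closed inclusion $\staroperad^{\rho_{tr}}(S)\hookrightarrow\staroperad^{\rho}(S)$ --- which is a morphism of $G$-operads, being $\mathscr{O}$ applied to the group inclusion $\mathcal{G}_{tr}\hookrightarrow\mathcal{G}$ --- exhibits its source as a strong $G$-deformation retract of its target at the level of underlying $G$-collections; in particular it is an arity-wise $G$-homotopy equivalence, hence an arity-wise weak $G$-equivalence, hence a weak equivalence of $G$-operads.

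The one step with genuine content is well-definedness. Fix $(x_i)\in\staroperad^{\rho}(S)(n)$ and $t\in[0,1]$. First, $\lambda(x_i,t)=(1-t)\id_V+t\lambda(x_i)$ is again blockwise scalar, each of its eigenvalues being a convex combination of $1$ and an eigenvalue of $\lambda(x_i)\in\Lambda(V_\bullet^{\prime})^0$, so $\lambda(x_i,t)\in\Lambda(V_\bullet^{\prime})^0$. Since $V_\bullet^{\prime}$ is a $G$-decomposition the conjugation action on $\Lambda(V_\bullet^{\prime})$ is trivial, and as $s_\rho$ is a $G$-homomorphism the element $s_\rho\lambda(x_i,t)\in\mathcal{G}^{\rho}$ is $G$-fixed and acts on $V$ as the linear contraction $\lambda(x_i,t)$. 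A blockwise contraction with all eigenvalues in $(0,1]$ carries the star domain $S$ into itself --- this is automatic in the spherical case, where the contraction is an ordinary scalar and only the star-shape of $S$ is used, and it holds for the products of round disks (aligned with $V_\bullet^{\prime}$) that we deal with in general. Hence $s_\rho\lambda(x_i,t)\in\mathcal{G}^{\rho}(S)$, and since $\mathcal{G}^{\rho}(S)$ is a monoid, the $i$-th coordinate $x_i\circ s_\rho\lambda(x_i,t)$ of $H((x_i),t)$ lies in $\mathcal{G}^{\rho}(S)$. For the disjointness condition: as an affine transformation $x_i\circ s_\rho\lambda(x_i,t)$ sends $S$ onto $x_i\bigl(\lambda(x_i,t)(S)\bigr)\subseteq x_i(S)$, and the same inclusion persists after acting by any $g\in G$; so the disjointness of the disks $\{(g\cdot x_i)(S)\}_i$ forces the disjointness of the smaller disks obtained after composing with $s_\rho\lambda(x_i,t)$, and $H((x_i),t)\in\staroperad^{\rho}(S)(n)$. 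Continuity of $H$ is immediate from continuity of $\lambda$, of $s_\rho$, and of operadic composition, and $\Sigma$- and $G$-equivariance follow since $\lambda$ is $G$-invariant, $s_\rho\lambda(x_i,t)$ is $G$-fixed, and composition is equivariant.

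It remains to check the endpoints and the fixing property. Because $s_\rho$ is a homomorphism, $s_\rho\lambda(x_i,0)=s_\rho(\id_V)=\id_{\rho}$, so $H((x_i),0)=(x_i\circ\id_{\rho})_i=(x_i)$. At $t=1$, $\lambda(x_i,1)=\lambda(x_i)$; writing $x_i=(f_i,v_i)$ with $f_i$ its linear part in $\mathcal{G}$, the previous lemma gives $f_i\cdot s_\rho\lambda(f_i)\in\mathcal{G}_{tr}$, hence $x_i\circ s_\rho\lambda(x_i)=\bigl(f_i\cdot s_\rho\lambda(f_i),\,v_i\bigr)\in\mathcal{G}_{tr}\ltimes V=\mathcal{G}_{tr}^{\rho_{tr}}$; together with the $\mathcal{G}^{\rho}(S)$-membership and disjointness already established, this puts $H((x_i),1)$ in $\staroperad^{\rho_{tr}}(S)(n)$. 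Finally, if $(x_i)\in\staroperad^{\rho_{tr}}(S)$ then each linear part $f_i$ lies in $\mathcal{G}_{tr}$, so $\delta(f_i)$ is a single scalar and the explicit formula for $\lambda$ gives $\lambda(f_i)=\id_V$; thus $\lambda(x_i,t)=\id_V$ and $H((x_i),t)=(x_i)$ for every $t$. This establishes the strong deformation retraction, and the weak equivalence of $G$-operads follows as above.

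The only obstacle worth flagging is the geometric input inside the well-definedness check: that a blockwise contraction carries $S$ into $S$. This is trivial when $\rho$ is spherical and for products of disks, but it is the single place where the shape of $S$ matters, and it is also exactly what keeps the disjointness condition stable along $H$; everything else is formal manipulation of the monoid $\mathcal{G}^{\rho}(S)$, the functor $\mathscr{O}(-)$, and the formula for $\lambda$.
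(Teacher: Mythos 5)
Your verification is correct and is precisely the ``straightforward to check'' argument the paper leaves implicit: the paper's own proof consists only of constructing $\lambda$, and your unwinding (well-definedness of each coordinate $x_i\circ s_\rho\lambda(x_i,t)$ via the blockwise contraction, preservation of disjointness, the endpoint computations using $f_i\cdot s_\rho\lambda(f_i)\in\mathcal{G}_{tr}$, and constancy on $\staroperad^{\rho_{tr}}(S)$) is exactly the intended check. Your flag about the geometric input is also apt: the step ``a blockwise contraction in $\Lambda(V_\bullet^{\prime})^0$ carries $S$ into $S$'' is automatic only when $\rho$ is spherical or when $S$ is a round ball or a product of disks compatible with $V_\bullet^{\prime}$, so the literal ``any star domain'' phrasing tacitly relies on the paper's standing restriction to such shapes, but this does not affect any application of the lemma in the paper.
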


We will now switch focus to showing that the homotopy type of $\staroperad^\rho(S)$ is independent of the particular star shape $S$. As explained in the introduction, we will only consider star shapes that are products of disks. 

Given a decomposition $W_\bullet:J\to \subspace(V)$ and function $\lambda:J\to \mathbb{R}_{>0}$, we will write $B_{W_\bullet}(v;\lambda_\bullet):= \bigtimes_{j\in J}B_{V_j}(\pr_{V_j}(v);\lambda_j)$. Note that if we have decompositions \[V_\bullet \leq W_\bullet \leq V_\bullet^{\prime}\] then any $f\in\DA(V_\bullet,V_\bullet^{\prime})$ is such that $$f\left(B_{W_\bullet}(v,\lambda_\bullet)\right)= B_{W_\bullet}(f(v),\mu_\bullet\lambda_\bullet)$$ where $\mu_j$ are some positive scalars.

For the next two lemmas, we will fix a dilation representation $\rho:\mathcal{G}\to \DL(V_\bullet,V_\bullet^{\prime})$ and decompositions \[V_\bullet \leq W_\bullet,W_\bullet^{\prime}\leq V_\bullet^{\prime}.\] We will also set $B:=B_{W_\bullet}(0,\epsilon_\bullet) $, $B^{\prime}=B_{W_\bullet^{\prime}}(0,\epsilon_\bullet^{\prime})$ and assume $B \subseteq B^{\prime}$

\begin{definition}{}{}
    Given star domains $S,T$ of $V$ we will write $\staroperad^{\rho}(S,T)$ for the intersection $\staroperad^{\rho}(S)\cap \staroperad^{\rho}(T)$.
\end{definition}
\sloppy{
\begin{lemma}\label{lem:comparisonfromlarger}
    There is a deformation retract of $\staroperad^\rho(B^{\prime})$ onto $\staroperad^\rho(B,B^{\prime})$ as $G$-collections.
    In particular, this implies that the inclusion of operads \[\staroperad^\rho(B,B^{\prime})\hookrightarrow \staroperad^\rho(B^{\prime})\] is a weak equivalence of $G$-operads.
\end{lemma}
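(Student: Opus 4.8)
The plan is to contract $\staroperad^\rho(B^{\prime})$ onto $\staroperad^\rho(B,B^{\prime})$ by uniformly rescaling whole configurations toward the origin. Recall that $x\in\staroperad^\rho(B^{\prime})$ is a tuple of components $x_i\in\mathcal{G}^\rho$ such that every translate $g\cdot x_i$ carries $B^{\prime}$ into $B^{\prime}$ with pairwise disjoint images, and that $x$ lies moreover in $\staroperad^\rho(B,B^{\prime})$ exactly when the same holds with $B$ in place of $B^{\prime}$. Since $V_\bullet^{\prime}$ is a $G$-decomposition, $G$ acts trivially on $\Lambda(V_\bullet^{\prime})$, so for $t\in(0,1]$ the element $\mu_t\in\Lambda(V_\bullet^{\prime})^0$ acting on $V$ as scalar multiplication by $t$ is $G$-fixed; hence so is $s_\rho(\mu_t)\in\mathcal{G}\subseteq\mathcal{G}^\rho$, using the $G$-equivariant partial section of \Cref{ass:topologicalgroups}. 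Let $\Psi_t$ be the self-map of $G$-collections on $\staroperad^\rho(B^{\prime})$ sending a configuration to the one whose components are $s_\rho(\mu_t)\cdot x_i$; on components this is the affine rescaling $x_i\mapsto(v\mapsto t\,x_i(v))$. Because $\mu_t$ is $G$-fixed, $g\cdot(s_\rho(\mu_t)x_i)=s_\rho(\mu_t)(g\cdot x_i)$; because $\mu_t$ is injective, disjointness of images is preserved; and because $B$ and $B^{\prime}$ are star domains, $t\cdot(g\cdot x_i)(B^{\prime})\subseteq tB^{\prime}\subseteq B^{\prime}$ and likewise with $B$. Thus each $\Psi_t$ does land in $\staroperad^\rho(B^{\prime})$ and carries $\staroperad^\rho(B,B^{\prime})$ into itself, and $\Psi_t\Psi_{t^{\prime}}=\Psi_{tt^{\prime}}$; so $\Phi_s:=\Psi_{e^{-s}}$ is a continuous flow of $G$-collection self-maps of $\staroperad^\rho(B^{\prime})$.

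To decide how far we must flow, I would use the Minkowski gauge $\norm{\cdot}_B$ of $B$; since $B$ is bounded, open and contains the origin, $\norm{\cdot}_B$ is a norm on $V$ with $B=\{v:\norm{v}_B<1\}$ and $\overline{B}=\{v:\norm{v}_B\le1\}$, and it is positively homogeneous. Set
\[
d(x):=\max_{g\in G}\ \max_{i}\ \sup_{v\in\overline{B}}\ \norm[\big]{(g\cdot x_i)(v)}_B .
\]
This is finite, since $\overline{B}$ is bounded and each $g\cdot x_i$ is affine with bounded linear part; and it is continuous in $x$, since $\hat\rho$ is continuous, $G$ is finite, and the inner supremum depends continuously on the affine map $g\cdot x_i$ (the evaluations $A\mapsto\norm{A(v)}_B$ being equi-Lipschitz over $v\in\overline{B}$). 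As an invertible affine map sends the open set $B$ to an open set, $(g\cdot x_i)(B)\subseteq B$ is equivalent to $(g\cdot x_i)(B)\subseteq\overline{B}$; and disjointness of the $B$-images is automatic from that of the $B^{\prime}$-images since $B\subseteq B^{\prime}$. Consequently, for $x\in\staroperad^\rho(B^{\prime})$ we have $x\in\staroperad^\rho(B,B^{\prime})$ if and only if $d(x)\le1$. Finally, positive homogeneity of the gauge yields the transformation law $d(\Phi_s x)=e^{-s}d(x)$.

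With this in hand I would put $\tau(x):=\max\{0,\log d(x)\}$, which is continuous, $G$-invariant, and (being symmetric in the components) defines a map of $G$-collections, and then define
\[
H\colon\staroperad^\rho(B^{\prime})\times[0,1]\longrightarrow\staroperad^\rho(B^{\prime}),\qquad H(x,t):=\Phi_{t\,\tau(x)}(x).
\]
Then $H(-,0)=\id$; for every $x$ one has $d(H(x,1))=e^{-\tau(x)}d(x)\le1$, so $H(-,1)$ takes values in $\staroperad^\rho(B,B^{\prime})$; and if $x\in\staroperad^\rho(B,B^{\prime})$ then $d(x)\le1$, so $\tau(x)=0$ and $H(x,t)=x$ for all $t$. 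Hence $H$ exhibits $\staroperad^\rho(B,B^{\prime})$ as a (strong) deformation retract of $\staroperad^\rho(B^{\prime})$ as $G$-collections; alternatively one can avoid the explicit $\tau$ and instead feed the flow $\Phi$ into the conversion lemma of \Cref{appendix: correction lemmas}. Since a deformation retract of $G$-collections is in particular an arity-wise $G$-homotopy equivalence, and the inclusion $\staroperad^\rho(B,B^{\prime})\hookrightarrow\staroperad^\rho(B^{\prime})$ is a morphism of $G$-operads, that inclusion is a weak equivalence of $G$-operads.

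The one genuinely non-formal point is that one cannot correct a single disk in isolation: post-composing an individual component $x_i$ with a dilation fixes the point $x_i(0)$, which may well lie in $B^{\prime}\setminus B$, so one is forced to rescale the whole configuration at once. The work then lies in checking that this simultaneous rescaling does not break the $B^{\prime}$-embedding conditions, and in establishing continuity of the entry time $\tau$ (equivalently, of $d$); I expect the bookkeeping around the fact that $B$ need not be $G$-invariant --- so $\norm{\cdot}_B$ is not $G$-invariant and the maximum over $g\in G$ genuinely cannot be dropped --- together with confirming that boundedness of $\overline{B}$ with $0$ in its interior really does suffice when $V$ is infinite-dimensional, to be the fiddliest parts.
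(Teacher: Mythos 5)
Your underlying homotopy is exactly the paper's: both contract by left-composing every component with the scalar $s_\rho(t)$, $t\in(0,1]$ (the paper writes $H(x,t)=s_\rho(1-t)\circ x$), and both rest on the observation that for $x\in\staroperad^\rho(B^{\prime})$ disjointness of the $B$-images is automatic, so membership in $\staroperad^\rho(B,B^{\prime})$ reduces to the containments $(g\cdot x_i)(B)\subseteq B$. Where you genuinely diverge is in converting the flow into a deformation retraction: the paper feeds $H$ into \cref{lem:deformationcorrection}, which requires closedness of $\staroperad^\rho(B,B^{\prime})$ (via \cref{suboperads are closed lemma}) plus the eventual-entry, monotonicity and unique-boundary-crossing conditions, and then obtains the continuous stopping time abstractly; you build the stopping time explicitly from the Minkowski gauge of $B$, using $d(\Phi_s x)=e^{-s}d(x)$ and the characterization $x\in\staroperad^\rho(B,B^{\prime})\Leftrightarrow d(x)\le 1$, which yields the strong $G$-deformation retract directly and bypasses both the closedness lemma and the boundary analysis --- a real economy. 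Two points you treat loosely should be made explicit for this to be airtight: (i) the gauge of $B$ is a norm, with $\overline B=\set{v\given \norm{v}_B\le 1}$ and $\operatorname{int}\overline B=B$, not merely because $B$ is bounded, open and contains $0$ (a general star domain fails this), but because here $B=B_{W_\bullet}(0,\epsilon_\bullet)$ is a symmetric convex product of balls, where in fact $\norm{v}_B=\max_j \norm{\pr_{W_j}v}/\epsilon_j$; and (ii) continuity of $d$ is cleanest if you use that product structure to compute the supremum block-wise as $\max_j\bigl(\delta_j\epsilon_j+\norm{w_j}\bigr)/\epsilon_j$ in terms of the scalar and translation parts of $\hat\rho(g\cdot x_i)$, so that continuity reduces to continuity of those parts of the representation, which the paper itself uses (e.g.\ in the construction preceding \cref{spherical equivalence lemma}); an ``equi-Lipschitz in the affine map'' argument needs a norm on the space of affine maps compatible with the topology of $\mathcal{G}^\rho$, which is exactly what this reduction supplies. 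With those two points spelled out, your argument is complete and reaches the same conclusion as the paper with somewhat less machinery.
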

}
\begin{proof}
    We will use \cref{lem:deformationcorrection}.
    Define a function \begin{align*}
        H: \staroperad^\rho(B^{\prime})\times[0,1)\to \staroperad^\rho(B^{\prime}) \\ H(x,t) := s_\rho\left(1-t\right)\circ x
    \end{align*}

    Before we verify that the conditions of \cref{lem:deformationcorrection} hold, let us make the following important observation.
    Because we have $B \subseteq B^{\prime}$, if $x\in \staroperad(B^{\prime})$ then $$(g\cdot x_i)(B)\cap (g\cdot x_j)(B)\subseteq (g\cdot x_i)(B^{\prime})\cap (g\cdot x_j)(B^{\prime}) =\emptyset$$ for all $g\in G$, and $i\neq j\in \ar(x)$.
    Hence, we have that for $x\in \staroperad(B^{\prime})$ that $x\in\staroperad(B,B^{\prime})$ if and only if $(g\cdot x_i)(B) \subseteq B$ for all $g\in G$, and $i\in \ar(x)$.

    Now, the suboperad $\staroperad^\rho(B,B^{\prime})$ is closed in $\staroperad^\rho(B^\prime)$ from \Cref{suboperads are closed lemma}.
    Let us now verify the needed conditions. In what follows we will omit the section map $s_\rho$ from notation and interpret scalar multiplication as acting through this map.

    \emph{Condition (1): There exists $t\in [0,1)$ such that $H(x,t)\in \staroperad^\rho(B,B^{\prime})$. }
    Since $B$ and $B^{\prime}$ are centered at $0$, we have that there exists $\lambda\in(0,1]$ such that $\lambda B^{\prime} \subseteq B$.
    Let $t=1-\lambda$.
    Then we have for all $g\in G$, $i\in \ar(x)$ that \[\left(g\cdot H(x,t)_i\right)(B^{\prime})= \lambda (g\cdot x_i)(B^{\prime}) \subseteq \lambda B^{\prime} \subseteq B.
    \] Hence, $$(g\cdot H(x,t)_i)(B) \subseteq (g\cdot H(x,t)_i)(B^{\prime}) \subseteq B$$ for all $i\in \ar(x)$, and $g\in G$. i.e., $H(x,t)\in \staroperad^\rho(B,B^{\prime})$ and condition (1) holds.

    \emph{Condition (2): If $H(x,t)\in \staroperad^\rho(B,B^{\prime})$, then $H(x,s)\in \staroperad^\rho(B,B^{\prime})$ for all $s\geq t$.}
    Observe the following for all $g\in G$, $i\in \ar(x)$: \begin{align*}
        (g\cdot H(x,s)_i)(B) & =(1-s)(g\cdot x)_i(B) \\ &= \frac{1-s}{1-t}(1-t)(g\cdot x)_i(B) \\ &= \frac{1-s}{1-t}(g\cdot H(x,t)_i)(B).
    \end{align*}
    Hence, if $H(x,t)\in \staroperad^\rho(B,B^{\prime})$, then for all $s\geq t$, we also have that \[(g\cdot H(x,s)_i)(B)=\frac{1-s}{1-t}(g\cdot H(x,t)_i)(B) \subseteq \frac{1-s}{1-t}B \subseteq B.
    \] Therefore, $H(x,s)\in \staroperad^\rho(B,B^{\prime})$ and condition (2) holds.

    \emph{Condition (3): The set $\left(\set{x}\times [0,1)\right) \cap H^{-1}(\partial \staroperad^\rho(B,B^{\prime}))$ is a singleton.}
    First observe that if $x\in \staroperad^\rho(B,B^{\prime})$ and there exists a $0<\mu<1$ such that $(g\cdot x)_i(B) \subseteq \mu B$ for all $g\in G$ and $i\in \ar(x)$ then $x\in \operatorname{Int}(\staroperad^\rho(B,B^{\prime}))$.

    Now, suppose that $\left(\set{x}\times [0,1)\right) \cap H^{-1}(\partial \staroperad^\rho(B,B^{\prime}))$ isn't a singleton.
    In particular, there exists $0<\lambda_1<\lambda_2$ such that $$H(x,\lambda_1),H(x,\lambda_2)\in \partial \staroperad^\rho(B,B^{\prime}).
    $$
    Since we have that \begin{align*}
        H(x,\lambda_2)= \frac{1-\lambda_2}{1-\lambda_1}H(x,\lambda_1)
    \end{align*}
    It follows that for all $i\in \ar(x)$ that \[(H(x,\lambda_2))_i(B) = \frac{1-\lambda_2}{1-\lambda_1}(H(x,\lambda_1))_i(B) \subseteq \frac{1-\lambda_2}{1-\lambda_1}B \subseteq  B.
    \] Since $0<\frac{1-\lambda_2}{1-\lambda_1}<1$, we deduce that $H(x,\lambda_2)$ is in the interior of $\staroperad(B,B^{\prime})$ which is a contradiction.
    Hence, condition (3) holds.

    In summary, we have verified the conditions for \cref{lem:deformationcorrection}, and so we conclude that there is a continuous $G$-invariant map \[\phi:\staroperad^\rho(B^{\prime})\to [0,1)\] such that the map \[\tilde{H}(x,t):=H(x,\phi(x)t)\] is a deformation retract of $\staroperad^\rho(B^{\prime})$ onto $\staroperad^\rho(B,B^{\prime})$ as $G$-collections.
\end{proof}

\begin{lemma}\label{lem:comparisonfromsmaller}
    There is a deformation retract of $\staroperad^\rho(B)$ onto $\staroperad^\rho(B,B^{\prime})$ as $G$-collections.
    In particular, this implies that the inclusion of operads \[\staroperad^\rho(B,B^{\prime})\hookrightarrow \staroperad^\rho(B)\] is a weak equivalence of $G$-operads.
\end{lemma}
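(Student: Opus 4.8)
The plan is to mirror the argument of \cref{lem:comparisonfromlarger}, but with one crucial change: instead of shrinking a whole configuration toward the global origin, we shrink each little disk toward its \emph{own} centre. Since $B$ is open with $0\in B$ and $B^{\prime}$ is bounded, fix once and for all a constant $\eta\in(0,1]$ with $\eta B^{\prime}\subseteq B$. I would then take the homotopy
\[
H:\staroperad^\rho(B)\times[0,1)\longrightarrow\staroperad^\rho(B),\qquad H(x,t):=x\circ (s_\rho(1-t))_{i\in\ar(x)},
\]
so that, using that $s_\rho(1-t)$ is $G$-fixed (the $G$-action on $\Lambda$ being trivial), $(g\cdot H(x,t)_i)(S)=(g\cdot x_i)((1-t)S)$ for every star domain $S$; inner composition with the scalar dilation $s_\rho(\lambda)$ contracts the $i$-th disk about its centre $x_i(0)$. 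This $H$ lands in $\staroperad^\rho(B)$ because $(1-t)S\subseteq S$ for any star domain $S$, and it is continuous and $G$-equivariant. Note that $1-t$ never reaches $0$, which is why the domain is $[0,1)$ rather than $[0,1]$ --- exactly as in \cref{lem:comparisonfromlarger} --- and the output is then corrected to an honest deformation via \cref{lem:deformationcorrection}.

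First I would record the analogue of the observation opening the proof of \cref{lem:comparisonfromlarger}: for $x\in\staroperad^\rho(B)$, membership in $\staroperad^\rho(B,B^{\prime})$ amounts to requiring \emph{both} that $(g\cdot x_i)(B^{\prime})\subseteq B^{\prime}$ and that the sets $(g\cdot x_i)(B^{\prime})$ are pairwise disjoint, for all $g\in G$ and $i\in\ar(x)$ --- neither of which is implied by $x\in\staroperad^\rho(B)$, since $B\subsetneq B^{\prime}$ in general. Granting that $\staroperad^\rho(B,B^{\prime})$ is closed in $\staroperad^\rho(B)$ by \Cref{suboperads are closed lemma}, I would verify the three hypotheses of \cref{lem:deformationcorrection}. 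Condition (1): at $t=1-\eta$ we get $(g\cdot H(x,t)_i)(B^{\prime})=(g\cdot x_i)(\eta B^{\prime})\subseteq(g\cdot x_i)(B)\subseteq B\subseteq B^{\prime}$, and these are pairwise disjoint in $i$ because $(g\cdot x_i)(B)$ already is; hence $H(x,1-\eta)\in\staroperad^\rho(B,B^{\prime})$. Condition (2): for $s\geq t$ one has $H(x,s)=H(x,t)\circ(s_\rho(\mu))_i$ with $\mu=\tfrac{1-s}{1-t}\leq 1$, and a further inner contraction by a factor $\leq 1$ preserves all four conditions (containment in $B$, containment in $B^{\prime}$, and the two disjointness statements), so $\staroperad^\rho(B,B^{\prime})$ is forward-invariant along each path $t\mapsto H(x,t)$. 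Condition (3): since $\staroperad^\rho(B,B^{\prime})$ is closed and the path is forward-invariant, the set of $t$ with $H(x,t)\in\staroperad^\rho(B,B^{\prime})$ is a terminal subinterval $[t_0,1)$, and one rules out two distinct boundary crossings exactly as in \cref{lem:comparisonfromlarger}, so $\set{x}\times[0,1)$ meets $H^{-1}(\partial\staroperad^\rho(B,B^{\prime}))$ in at most the single point $t_0$.

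Feeding $H$ into \cref{lem:deformationcorrection} produces a $G$-invariant continuous $\phi:\staroperad^\rho(B)\to[0,1)$ for which $\tilde H(x,t):=H(x,\phi(x)t)$ is a deformation retract of $\staroperad^\rho(B)$ onto $\staroperad^\rho(B,B^{\prime})$ as $G$-collections; exactly as in \cref{lem:comparisonfromlarger} and \cref{spherical equivalence lemma}, this makes the operad inclusion $\staroperad^\rho(B,B^{\prime})\hookrightarrow\staroperad^\rho(B)$ a weak equivalence of $G$-operads. The one genuinely new point relative to \cref{lem:comparisonfromlarger}, and the step I expect to be the crux, is the choice of the \emph{inner} contraction $x\circ(s_\rho(1-t))_i$ over the outer one: shrinking toward the global origin preserves overlaps between the enlarged $B^{\prime}$-disks and fails to pull them inside $B^{\prime}$, whereas contracting the $i$-th disk about its own centre by the universal factor $\eta$ forces the enlarged disk $(g\cdot x_i)(\eta B^{\prime})$ into the already-controlled disk $(g\cdot x_i)(B)$, yielding containment in $B^{\prime}$ and pairwise disjointness simultaneously. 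A minor subsidiary point to check carefully is the affine-geometric fact that $x\circ(s_\rho(\lambda))_i$ really does contract each disk about its centre rather than about the origin.
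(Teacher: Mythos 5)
Your proof is correct and follows essentially the same route as the paper: the same inner homotopy $H(x,t)=x\circ(s_\rho(1-t))_{i\in\ar(x)}$, landing in \cref{lem:deformationcorrection} after verifying the same three conditions (with the same choice $t=1-\eta$ in condition (1) and the same rescaling trick in conditions (2)–(3)). The only divergence is cosmetic: the paper first observes that for $x\in\staroperad^\rho(B)$ the containments $(g\cdot x_i)(B^{\prime})\subseteq B^{\prime}$ are automatic because $B\subseteq B^{\prime}$ are products of balls centered at the origin, so only disjointness of the $B^{\prime}$-images needs tracking, whereas you carry the containment condition along explicitly --- harmless, though your aside that this containment is \emph{not} implied by $x\in\staroperad^\rho(B)$ contradicts that (true) observation.
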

\begin{proof}
    The proof of this lemma mirrors that of \cref{lem:comparisonfromlarger} except the homotopy under consideration comes from right multiplication of scalers, and we have to deal with an intersection condition instead of an inclusion one.
    
    This time, let us define \begin{align*}
        H: \staroperad^\rho(B)\times[0,1)\to \staroperad^\rho(B) \\ H(x,t) := x\circ \left(s_\rho(1-t)\right)_{i\in\ar(x)}.
    \end{align*}
    Because we are only considering open balls in $B$ and $\vB[']$, we know that if $f\in\DA(V_\bullet,V_\bullet^\prime)$ and $f(B) \subseteq B$, then $f(\vB[']) \subseteq \vB[']$. A consequence of this is that for $x\in \staroperad^\rho(B)$, we automatically get that $x\in\ambientoperad^\rho(B^{\prime})$, and so we have $x\in\staroperad^\rho(B,B^{\prime})$ if and only if for all $g\in G$ and $i\neq j\in\ar(x)$ that \[(g\cdot x_i)(B^{\prime})\cap (g\cdot x_j)(B^{\prime})=\emptyset.\]

    \emph{Condition (1): There exists $t\in [0,1)$ such that $H(x,t)\in \staroperad^\rho(B,B^{\prime})$. }
    Again, there exists $\lambda\in(0,1]$ such that $\lambda B^{\prime} \subseteq B$.
    Let $t=1-\lambda$.
    Then we have for all $g\in G$, $i\neq j\in \ar(x)$ that 
    \begin{align*}
        \left(g\cdot H(x,t)_i\right)(B^{\prime})\cap \left(g\cdot H(x,t)_j\right)(B^{\prime}) &= \left(g\cdot x_i\right)(\lambda B^{\prime})\cap \left(g\cdot x_j\right)(\lambda B^{\prime}) \\ &\subseteq \left(g\cdot x_i\right)(B)\cap \left(g\cdot x_j\right)(B) \\ &= \emptyset. 
    \end{align*}
     Hence, $H(x,t)\in \staroperad^\rho(B,B^{\prime})$ and condition (1) holds.

    \emph{Condition (2): If $H(x,t)\in \staroperad^\rho(B,B^{\prime})$, then $H(x,s)\in \staroperad^\rho(B,B^{\prime})$ for all $s\geq t$.}
    Using the same algebraic trick (except on the right side) as in the last lemma, we have that \[(g\cdot H(x,s)_i)(B^{\prime}) = (g\cdot H(x,t)_i)\left(\frac{1-s}{1-t}B^{\prime}\right). \]
    Therefore, if $H(x,t)\in \staroperad^\rho(B,B^{\prime})$ and $s\geq t$, then we have for $i\neq j\in\ar(x)$\begin{align*}
        &(g\cdot H(x,s)_i)(B^{\prime})\cap (g\cdot H(x,s)_j)(B^{\prime}) \\ &= (g\cdot H(x,t)_i)\left(\frac{1-s}{1-t}B^{\prime}\right) \cap (g\cdot H(x,t)_j)\left(\frac{1-s}{1-t}B^{\prime}\right)\\ &\subseteq  (g\cdot H(x,t)_i)\left(B^{\prime}\right)\cap (g\cdot H(x,t)_j)\left(B^{\prime}\right) \\ &=\emptyset.
    \end{align*}
    So condition (2) holds.

    \emph{Condition (3): The set $\left(\set{x}\times [0,1)\right) \cap H^{-1}(\partial \staroperad^\rho(B,B^{\prime}))$ is a singleton.} In comparison to \cref*{lem:comparisonfromlarger}, for $x\in \staroperad^\rho(B,B^{\prime})$, if there exists $\mu>1$ such that $$(g\cdot x)_i(\mu B^{\prime})\cap (g\cdot x)_j(\mu B^{\prime})  =\emptyset$$ for all $g\in G$ and $i\neq j\in \ar(x)$ then $x\in \operatorname{Int}(\staroperad^\rho(B,B^{\prime}))$.

    Now, suppose that $\left(\set{x}\times [0,1)\right) \cap H^{-1}(\partial \staroperad^\rho(B,B^{\prime}))$ isn't a singleton.
    In particular, there exists $0<\lambda_1<\lambda_2$ such that $$H(x,\lambda_1),H(x,\lambda_2)\in \partial \staroperad^\rho(B,B^{\prime}).
    $$
    Since we have that \begin{align*}
        H(x,\lambda_1)= H(x,\lambda_2)\circ \frac{1-\lambda_1}{1-\lambda_2}
    \end{align*}
    It follows that for all $i\neq j\in \ar(x)$, $g\in G$ that 
    \begin{align*}
        &(g\cdot H(x,\lambda_2)_i)\left(\frac{1-\lambda_1}{1-\lambda_2}B^{\prime}\right)\cap (g\cdot H(x,\lambda_2)_j)\left(\frac{1-\lambda_1}{1-\lambda_2}B^{\prime}\right) \\ &= (g\cdot H(x,\lambda_1)_i)\left(B^{\prime}\right)\cap (g\cdot H(x,\lambda_1)_j)\left(B^{\prime}\right) \\ &= \emptyset.
    \end{align*}
    Since $\frac{1-\lambda_1}{1-\lambda_2}>1$, we deduce that $H(x,\lambda_2)$ is in the interior of $\staroperad(S,T)$ which is a contradiction.
    Hence, condition (3) holds.

\end{proof}

The following is the main theorem of this section.

\begin{theorem}\label{thm:maincomparison}
    Let $\rho:\mathcal{G}\to \DL(V_\bullet,V_\bullet^{\prime})$ be a dilation representation, and \[V_\bullet \leq W_\bullet,W_\bullet^{\prime}\leq V_\bullet^{\prime}\] $G$-decompositions of $V$. For \emph{any} $B:=B_{W_\bullet}(0,\epsilon_\bullet) $, and $B^{\prime}=B_{W_\bullet^{\prime}}(0,\epsilon_\bullet^{\prime})$,
    there is a zigzag of weak equivalences between $\staroperad^\rho(B)$ and $\staroperad^\rho(B^{\prime})$.
\end{theorem}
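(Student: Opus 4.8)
The plan is to deduce this from \Cref{lem:comparisonfromlarger} and \Cref{lem:comparisonfromsmaller} by slipping a single small box in between $B$ and $B^{\prime}$. The crucial point is that the finest decomposition $V_\bullet$ is itself an admissible shape for those lemmas — trivially $V_\bullet \leq V_\bullet$, and $V_\bullet \leq V_\bullet^{\prime}$ by hypothesis — so a box $B^{\prime\prime} := B_{V_\bullet}(0,\epsilon_\bullet^{\prime\prime})$ is a legitimate domain to compare against either $B$ or $B^{\prime}$. Because $V_\bullet \leq W_\bullet$ and $V_\bullet \leq W_\bullet^{\prime}$, each factor $W_k$ (respectively $W_k^{\prime}$) is the orthogonal sum of the pieces $V_j$ it contains; hence for radii $\epsilon_\bullet^{\prime\prime}$ small enough that $\sum_{j : V_j \subseteq W_k}(\epsilon_j^{\prime\prime})^2 \le \epsilon_k^2$ for every $k$, together with the analogous inequalities for $W_\bullet^{\prime}$, one gets $B^{\prime\prime} \subseteq B$ and $B^{\prime\prime} \subseteq B^{\prime}$. (More conceptually: the boxes $B_{V_\bullet}(0,\delta_\bullet)$ form a neighbourhood basis of the origin, and $B$, $B^{\prime}$ are open neighbourhoods of the origin.) Fix such a $B^{\prime\prime}$.

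First I would apply \Cref{lem:comparisonfromsmaller} and \Cref{lem:comparisonfromlarger} to the nested pair $B^{\prime\prime} \subseteq B$, taking the two shapes there to be $V_\bullet$ and $W_\bullet$ — both lie between $V_\bullet$ and $V_\bullet^{\prime}$, so the hypotheses are met. This gives that the inclusions $\staroperad^\rho(B^{\prime\prime},B) \hookrightarrow \staroperad^\rho(B^{\prime\prime})$ and $\staroperad^\rho(B^{\prime\prime},B) \hookrightarrow \staroperad^\rho(B)$ are weak equivalences of $G$-operads. Applying the same two lemmas to the nested pair $B^{\prime\prime} \subseteq B^{\prime}$ likewise shows that $\staroperad^\rho(B^{\prime\prime},B^{\prime}) \hookrightarrow \staroperad^\rho(B^{\prime\prime})$ and $\staroperad^\rho(B^{\prime\prime},B^{\prime}) \hookrightarrow \staroperad^\rho(B^{\prime})$ are weak equivalences. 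Splicing these four maps together produces the zigzag
\[
\staroperad^\rho(B) \xleftarrow{\ \sim\ } \staroperad^\rho(B^{\prime\prime},B) \xrightarrow{\ \sim\ } \staroperad^\rho(B^{\prime\prime}) \xleftarrow{\ \sim\ } \staroperad^\rho(B^{\prime\prime},B^{\prime}) \xrightarrow{\ \sim\ } \staroperad^\rho(B^{\prime})
\]
of weak equivalences of $G$-operads, which is exactly the assertion.

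I do not expect a genuine obstacle here: all of the analytic content already resides in the two comparison lemmas, and the theorem is essentially their formal consequence. The only point that needs a little care is the opening step — verifying that the interposed box $B^{\prime\prime}$ can be chosen simultaneously of an admissible shape (namely $V_\bullet$, the finest decomposition in sight) and small enough to sit inside both $B$ and $B^{\prime}$. If one prefers, the argument can be packaged even more cheaply: ``being connected by a finite zigzag of weak equivalences'' is an equivalence relation on $G$-operads, the two comparison lemmas say that $\staroperad^\rho(B_1)$ and $\staroperad^\rho(B_2)$ are so connected whenever $B_1 \subseteq B_2$ are admissible boxes, and the chain $B \supseteq B^{\prime\prime} \subseteq B^{\prime}$ exhibits $B$ and $B^{\prime}$ as linked by admissible boxes each nested in its neighbour.
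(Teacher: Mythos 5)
Your proposal is correct and follows essentially the same route as the paper: the paper likewise interposes a small box $B^{\prime\prime}=B_{W_\bullet^{\prime\prime}}(0,\epsilon_\bullet^{\prime\prime})$ with $V_\bullet\leq W_\bullet^{\prime\prime}\leq V_\bullet^{\prime}$ and $B^{\prime\prime}\subseteq B\cap B^{\prime}$, and splices \Cref{lem:comparisonfromlarger} and \Cref{lem:comparisonfromsmaller} into the same four-step zigzag. Your explicit choice $W_\bullet^{\prime\prime}=V_\bullet$ with the radius estimate is just a concrete instance of the intermediate decomposition whose existence the paper asserts.
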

\begin{proof}
    There exists a decomposition $W_\bullet^{\prime\prime}$ such that \[V_\bullet \leq W_\bullet^{\prime\prime} \leq V_\bullet^{\prime},\] and $B^{\prime\prime}:= B_{W_\bullet^{\prime\prime}}(0,\epsilon_\bullet^{\prime\prime})$ where $B^{\prime\prime} \subseteq B\cap B^{\prime}$.
    
    Using the previous two lemmas, the following is then a zigzag of weak equivalence:
    \[\staroperad^\rho(B) \leftarrow \staroperad^\rho(B,B\cap B^{\prime\prime}) \rightarrow \staroperad^\rho(B^{\prime\prime}) \leftarrow \staroperad^\rho(B^{\prime\prime},B^{\prime}) \rightarrow \staroperad^\rho(B^\prime).
    \]
\end{proof}

\begin{remark}
    \Cref{thm:maincomparison} can be viewed as a generalization of the well known fact that the little cube operads $\mathcal{C}_n$ and little disk operads $\mathcal{D}_n$ are weakly equivalent. This can be seen by taking just the trivial dilation representation and $G=\set{e}$ the trivial group. Then the unit disk and unit cube are both of the form $B(0,\epsilon_\bullet)$ for different decompositions. 
\end{remark}

\section{Divisibility in Operads}\label{sec:divisibility}
The difficult part of proving the additivity theorem is showing certain maps from the Boardman-Vogt tensor are injective.
This is intimately linked to understanding how divisors behave in the component operads.
By a divisor, we mean the following.

\begin{definition}{}{}
    Let $P$ be an operad and $x,y,q^i\in P$ such that \[y=x\underset{\alpha}{\circ}(q^i)_{i\in\ar{x}}.\]
    We will say that $x$ is a \emph{(left) divisor of $y$} and $q^i$ are the \emph{quotients}.
    We will also say that $\alpha$ \emph{structures the division}.
\end{definition}

Quotients are not necessarily unique in general, however, they are for the operads that we consider and this turns out to be a key property.

\begin{definition}{}{}
    An operad $P$ is \emph{left-cancellable} if for any $y\in P$ and two divisions \[y= x\circ (q_1^i)_{i\in\ar(y_1)} = x\circ (q_2^i)_{i\in\ar(y_1)}\] which are structured by the same map $\alpha: \ar(y)\to\ar(x)$, then we must have that $q_1^i = q_2^i$ for all $i\in\ar(x)$.
    Note that a left-cancellable operad is necessarily a reduced operad.
\end{definition}

The reason that an operad being left-cancellable is a key condition is that it allows inductive arguments to be made on the cardinality of an elements indexing set. This forms the main idea of the proof technique of \Barata-\Moerdijk~\cite{barataAdditivityLittleCubes2022}. For instance, if we want to show that an operad map $f:P\to Q$ was injective, then suppose we have $x,y\in P_n$ such that $f(x)=f(y)$ and $f$ is injective on $P_{<n}$. Furthermore, assume that $x,y$ share a common divisor $z$ where $\abs{\ar(z)}>1$. i.e., \begin{align*}
    x &= z\circ (q^i)_{i\in\ar(z)} \\
    y &= z\circ (r^i)_{i\in\ar(z)}
\end{align*}
for some $q^i,r^i\in P_{<n}$. Then we have that \begin{align*}
    f(x) &= f(y) \\ 
    f(z) \circ (f(q^i))_{i\in\ar(z)} &= f(z) \circ (f(r^i))_{i\in\ar(z)} 
\end{align*}
which, assuming $Q$ is left-cancellable, we get for all $i\in\ar(z)$ that \begin{align*}
    f(q^i) &= f(r^i) .
\end{align*}
So then $q^i=r^i$ since we assumed injectivity on $P_{<n}$, and we then deduce that $x=y$. 

The proof that the map $\mathcal{C}_n\otimes \mathcal{C}_m\to \mathcal{C}_{n+m}$ is injective essentially amounts to the above argument and showing that common divisors exist. Unfortunately, such common divisors don't exist for little disk operads. Instead, our argument is a bit more involved, and we will instead show that there exists a chain of divisors.

Since $\ambientoperad^\rho(S)$ lives inside $\ambientoperad^\rho$ which is generated by a \emph{group} and not just a monoid, quotients (when they exist), are unique. It is straightforward to prove that
\begin{lemma}{}{}
    The operads $\ambientoperad^\rho(S)$ and $\staroperad^\rho(S)$ are left-cancellable.
\end{lemma}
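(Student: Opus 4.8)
The plan is to prove both statements at once, since the claim for $\staroperad^\rho(S)$ will follow from the claim for $\ambientoperad^\rho(S)$ once we observe that $\staroperad^\rho(S)$ is a suboperad of $\ambientoperad^\rho(S)$ and left-cancellability is inherited by suboperads (if two divisions of $y$ agree inside the ambient operad, they agree in the suboperad). So the real content is: $\ambientoperad^\rho(S) = \mathscr{O}(\mathcal{G}^\rho(S))$ is left-cancellable. First I would recall that $\ambientoperad^\rho(S)$ sits inside the universal ambient operad $\ambientoperad^\rho = \mathscr{O}(\mathcal{G}^\rho)$, which by the Igusa construction has $n$-ary part $\prod_{j\in\Ord{n}}\mathcal{G}^\rho$ with composition along $\alpha:\Ord{m}\to\Ord{n}$ given by $\bigl((a_j)_j,((b_k)_k)_j\bigr)\mapsto (a_{\alpha(k)}b_k)_k$. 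The key structural point is that $\mathcal{G}^\rho$ is a \emph{group}, not merely a monoid.

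The main step is then a direct computation. Suppose $y = x\circ_\alpha (q_1^i)_i = x\circ_\alpha (q_2^i)_i$ in $\ambientoperad^\rho$, structured by the same $\alpha:\ar(y)\to\ar(x)$. Writing $x = (x_i)_{i\in\ar(x)}$, $q_t^i = (q_t^i{}_{,k})_{k\in\ar(q_t^i)}$, and $y=(y_k)_{k\in\ar(y)}$, the composition formula gives, for each $k\in\ar(y)$, the equality $x_{\alpha(k)}\, q_1^{\alpha(k)}{}_{,k} = y_k = x_{\alpha(k)}\, q_2^{\alpha(k)}{}_{,k}$ in the group $\mathcal{G}^\rho$. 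Since $\mathcal{G}^\rho$ is a group we may cancel $x_{\alpha(k)}$ on the left, obtaining $q_1^{\alpha(k)}{}_{,k} = q_2^{\alpha(k)}{}_{,k}$ for every $k$; as $\alpha$ is surjective onto $\ar(x)$ (which it must be, since $\ar(q_t^i)$ is the fiber $\ar(y)_i$ and for a well-defined composition every $i$ appears — or, if some fiber is empty, the corresponding $q_t^i\in\mathcal{G}^\rho(S)(0)=\ast$ is forced anyway) this says exactly that $q_1^i = q_2^i$ for all $i\in\ar(x)$, coordinatewise, hence as elements of $\ambientoperad^\rho$. This also shows $P$ is reduced, since $\ambientoperad^\rho(0)=\mathscr{O}(\mathcal{G}^\rho)(0)=\prod_{j\in\emptyset}\mathcal{G}^\rho=\ast$ and likewise the monoid $\mathcal{G}^\rho(S)$ gives $\ambientoperad^\rho(S)(0)=\ast$.

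Finally I would note that $\ambientoperad^\rho(S)$ is a \emph{sub}operad of $\ambientoperad^\rho$: its composition is the restriction of the composition above, so the same cancellation argument applies verbatim with all elements lying in $\mathcal{G}^\rho(S)\subseteq\mathcal{G}^\rho$ (we only ever cancel, never multiply, so staying inside the submonoid is automatic). The same remark transports the property down to the suboperad $\staroperad^\rho(S)\subseteq\ambientoperad^\rho(S)$. There is essentially no obstacle here beyond being careful about the bookkeeping with fibers and the surjectivity of $\alpha$; the one point worth stating cleanly is that left-cancellability passes to suboperads, which is immediate since an equation of divisions in the suboperad is in particular an equation in the ambient operad.
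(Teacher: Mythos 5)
Your proposal is correct and matches the paper's intent: the paper leaves this lemma as ``straightforward,'' with the preceding remark that $\ambientoperad^\rho(S)$ sits inside $\ambientoperad^\rho=\mathscr{O}(\mathcal{G}^\rho)$, which is built from a \emph{group}, so quotients cancel uniquely --- exactly the componentwise cancellation $x_{\alpha(k)}q^{\alpha(k)}_k = y_k$ in $\mathcal{G}^\rho$ that you spell out. Your observations that empty fibers force the quotient to be the nullary point and that left-cancellability passes to suboperads (hence to $\ambientoperad^\rho(S)$ and $\staroperad^\rho(S)$) complete the bookkeeping correctly.
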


Interestingly, whether two elements divide each other in the various star operads can be determined in an entirely geometric way as the next lemma shows.

\begin{lemma}{}{}
    Let $H\leq G$, and suppose $S$ is an $H$-invariant star domain of $G$-representation $V$.
    Write $g_1,\dots, g_n$ for a complete set of representatives of the right coset space $H/G$.
    Then, $x\in\ambientoperad^\rho(S)$ is a divisor for $y\in \ambientoperad^\rho(S)$ in $\ambientoperad^\rho(S)$ if and only if for each $i\in \ar(y)$, there exists $j\in\ar(x)$ such that $$(g_k\cdot y_i)(S) \subseteq (g_k\cdot x_j)(S)$$ for all $g_k$.
\end{lemma}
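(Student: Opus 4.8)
The plan is to peel away the definitions one layer at a time. Since $\ambientoperad^\rho(S)=\mathscr{O}(\mathcal{G}^\rho(S))$ is built from the submonoid $\mathcal{G}^\rho(S)$ of the group $\mathcal{G}^\rho$, both division in this operad and membership in $\mathcal{G}^\rho(S)$ admit completely explicit descriptions, and the lemma is essentially a translation between them. First I would pin down what a division looks like. Writing $x=(x_j)_{j\in\ar(x)}$ and $y=(y_k)_{k\in\ar(y)}$ with all entries in $\mathcal{G}^\rho(S)$, the composition formula for $\mathscr{O}(-)$ says that a division $y=x\circ_\alpha(q^i)_{i\in\ar(x)}$ structured by $\alpha\colon\ar(y)\to\ar(x)$ has $k$-th coordinate $y_k=x_{\alpha(k)}\,q^{\alpha(k)}_k$ (writing $q^i_k$ for the $k$-th coordinate of the tuple $q^i$). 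As $\mathcal{G}^\rho$ is a group, this forces $q^{\alpha(k)}_k=x_{\alpha(k)}^{-1}y_k$ computed in $\mathcal{G}^\rho$; conversely, for any $\alpha$ these formulas define tuples $q^i$ with $x\circ_\alpha(q^i)_i=y$. Hence $x$ is a divisor of $y$ in $\ambientoperad^\rho(S)$ if and only if there is some $\alpha\colon\ar(y)\to\ar(x)$ such that $x_{\alpha(k)}^{-1}y_k\in\mathcal{G}^\rho(S)$ for every $k\in\ar(y)$ — this being exactly the requirement that each candidate quotient $q^i$ lie in $\ambientoperad^\rho(S)$ rather than merely in $\ambientoperad^\rho$.

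Next I would unwind $x_{\alpha(k)}^{-1}y_k\in\mathcal{G}^\rho(S)$. By definition this means $\bigl(g\cdot(x_{\alpha(k)}^{-1}y_k)\bigr)(S)\subseteq S$ for all $g\in G$. The $G$-action on $\mathcal{G}^\rho$ is by group automorphisms, so $g\cdot(x_{\alpha(k)}^{-1}y_k)=(g\cdot x_{\alpha(k)})^{-1}(g\cdot y_k)$, and applying the invertible affine map $g\cdot x_{\alpha(k)}$ to both sides of the inclusion turns the condition into $(g\cdot y_k)(S)\subseteq(g\cdot x_{\alpha(k)})(S)$. Thus $x$ divides $y$ in $\ambientoperad^\rho(S)$ if and only if there is $\alpha\colon\ar(y)\to\ar(x)$ with $(g\cdot y_k)(S)\subseteq(g\cdot x_{\alpha(k)})(S)$ for all $k\in\ar(y)$ and all $g\in G$.

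Finally I would cut the quantifier ``$\forall g\in G$'' down to the coset representatives using the $H$-invariance of $S$. Because $\mathcal{G}^\rho$ maps $G$-equivariantly to $\GA(V)$, which acts on $V$ by conjugation, we have $(g\cdot z)(S)=g\bigl(z(g^{-1}S)\bigr)$ for any $z\in\mathcal{G}^\rho$; applying $g^{-1}$ shows $(g\cdot y_k)(S)\subseteq(g\cdot x_j)(S)$ is equivalent to $y_k(g^{-1}S)\subseteq x_j(g^{-1}S)$. Since $h^{-1}S=S$ for $h\in H$, the set $g^{-1}S$, and hence this last condition, depends only on the right coset $Hg$; as $g_1,\dots,g_n$ run over a complete set of representatives, $\{g_1^{-1}S,\dots,g_n^{-1}S\}=\{g^{-1}S:g\in G\}$, so imposing the condition for all $g\in G$ is equivalent to imposing it only for $g=g_1,\dots,g_n$. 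Combining the three reductions, $x$ divides $y$ in $\ambientoperad^\rho(S)$ if and only if there exists $\alpha\colon\ar(y)\to\ar(x)$ with $(g_k\cdot y_i)(S)\subseteq(g_k\cdot x_{\alpha(i)})(S)$ for all $i\in\ar(y)$ and all $g_k$, and the existence of such an $\alpha$ is precisely the statement that for each $i\in\ar(y)$ there is some $j\in\ar(x)$ with $(g_k\cdot y_i)(S)\subseteq(g_k\cdot x_j)(S)$ for all $g_k$. The only point requiring genuine care is the bookkeeping of the two distinct $G$-actions — conjugation on $\mathcal{G}^\rho$ versus the linear action on $V$ — together with keeping the inverses straight so that the reduction lands on right cosets; the rest is formal.
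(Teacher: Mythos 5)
Your proposal is correct and follows essentially the same route as the paper: identify the quotients as $x_{\alpha(i)}^{-1}y_i$ in the group $\mathcal{G}^\rho$, translate membership of these quotients in $\mathcal{G}^\rho(S)$ into the inclusion $(g\cdot y_i)(S)\subseteq(g\cdot x_{\alpha(i)})(S)$ for all $g\in G$, and use $H$-invariance of $S$ together with the decomposition $g=h\,g_k$ to reduce the quantifier to the coset representatives. The only cosmetic difference is that the paper writes out the candidate quotient tuples $q^j=(x_j^{-1}\circ y_i)_{i\in\alpha^{-1}(j)}$ and verifies them directly, while you phrase the same computation as a chain of equivalences; the mathematical content is identical.
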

\begin{proof}
    If $x$ is a divisor of $y$, let $\alpha:\ar(y)\to \ar(x)$ be the map that structures the division with quotients $q^j\in \ambientoperad^\rho(S)$ for $j\in \ar(x)$ such that \[y=x\underset{\alpha}{\circ} (q^j)_{j\in\ar(x)}.\]
    So for each $i\in\ar(y)$, we have that $y_i=x_{\alpha(i)}\circ q^{\alpha(i)}_i$ and for each $g\in G$ we have \[(g\cdot y_i)(S)=(g\cdot x_{\alpha(i)})\circ (g\cdot q^{\alpha(i)}_i)(S) \subseteq (g\cdot x_{\alpha(i)})(S).\]
    Since any $g\in G$ can be written as $h_gg_k$ for some $h_g\in H$ and right coset representative $g_k$, we get that \[h_g(g_k\cdot y_i)(S) \subseteq h_g(g_k\cdot x_{\valpha{i}})(S)\] using $S$ is $H$-invariant.
    Applying the inverse $h_g^{-1}$ gives us the forward direction.

    Conversely, if for each $i\in\ar(x)$, there exists $j\in\ar(y)$ such that $(g_k\cdot y_i)(S) \subseteq (g_k\cdot x_j)(S)$ for all representatives $g_k$.
    We can then build a function $\alpha:\ar(y)\to\ar(x)$ so that $$(g_k\cdot y_i)(S) \subseteq (g_k\cdot x_{\alpha(i)})(S).$$

    Note that as $S$ is $H$-invariant and every element of $g\in G$ can be written in the form $h_gg_k$ for some $h_g\in H$ and representative $g_k$, we have that $$(g\cdot y_i)(S) \subseteq (g\cdot x_{\alpha(i)})(S)$$ for all $g\in G$.

    We now build the quotients to show that $x$ is a divisor of $y$.
    For each $j\in\ar(x)$ define the following element in $\ambientoperad^\rho$: \[q^j:=\begin{cases}
            \ast & \text{if } j\notin\im(\alpha) \\ (x_{j}^{-1}\circ y_i)_{i\in \alpha^{-1}(j)} & \text{if } j\in\im(\alpha).
        \end{cases}\] This is defined so that $y=x\underset{\valpha}{\circ} (q^j)_{j\in\ar(x)}
    $.
    Hence, all we need to show now is that $q^{\alpha(i)}_i\in \mathcal{G}^\rho(S)$ for all $i\in \ar(y)$.
    By applying $(g\cdot x_{\alpha(i)}^{-1})$ to the above inclusion, we get that \[(g\cdot q_i^{\alpha(i)})(S)= (g\cdot x_{\alpha(i)}^{-1})\circ (g\cdot y_i)(S) \subseteq S.\]
    Hence, we are done.
\end{proof}

\begin{lemma}{}{}
    Suppose $x,y\in \ambientoperad^\rho(S)$, $S$ a star domain of $V$, and $x$ divides $y$ in $\ambientoperad^\rho(S)$ with quotients $(q^j)_{j\in \ar(y)}$.
    If $y\in \staroperad^\rho(S)$ then $q^j\in \staroperad^\rho(S)$ for all $j\in \ar(y)$.
\end{lemma}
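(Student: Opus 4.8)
The plan is to unwind the defining condition of $\staroperad^\rho(S)$ inside $\ambientoperad^\rho(S)$ and then exploit that $\ambientoperad^\rho$ is assembled from the \emph{group} $\mathcal{G}^\rho$ rather than a mere monoid, so that quotients are honest inverses. Write the division as $y = x\underset{\alpha}{\circ}(q^j)_{j\in\ar(x)}$ with $\alpha:\ar(y)\to\ar(x)$, so that $q^j$ has arity $\alpha^{-1}(j)$. Since the division takes place in $\ambientoperad^\rho(S)$ we already know $q^j\in\ambientoperad^\rho(S)$ (this is also re-established by the computation in the proof of the previous lemma). Hence the only thing to verify is, for each $j$, the pairwise-disjointness condition: for all $g\in G$ and all $i\neq i'$ in $\ar(q^j)=\alpha^{-1}(j)$,
\[(g\cdot q^j_i)(S)\cap(g\cdot q^j_{i'})(S)=\emptyset.\]
When $j\notin\im(\alpha)$ the element $q^j$ is the nullary $\ast$ and there is nothing to check, and when $\abs{\alpha^{-1}(j)}\le 1$ the condition is vacuous; so assume $i\neq i'$ both lie in $\alpha^{-1}(j)$.

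First I would identify the components of $q^j$. Working in $\mathcal{G}^\rho$ and using that composition in $\mathscr{O}(\mathcal{G}^\rho)$ is just multiplication, taking $i$-components of $y=x\underset{\alpha}{\circ}(q^j)_{j\in\ar(x)}$ yields $y_i=x_j\cdot q^j_i$ for every $i\in\alpha^{-1}(j)$, hence $q^j_i=x_j^{-1}\cdot y_i$; this is precisely the formula appearing in the proof of the previous lemma. Applying the $G$-action to this component equation (the action on $\ambientoperad^\rho$ is diagonal, through the group automorphisms by which $G$ acts on $\mathcal{G}^\rho$) gives $g\cdot y_i=(g\cdot x_j)(g\cdot q^j_i)$, so $g\cdot q^j_i=(g\cdot x_j)^{-1}(g\cdot y_i)$. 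Regarding these elements as invertible affine transformations of $V$, this means
\[(g\cdot q^j_i)(S)=(g\cdot x_j)^{-1}\bigl((g\cdot y_i)(S)\bigr).\]

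Then I would finish by a bijectivity argument: an invertible affine map carries disjoint sets to disjoint sets, so
\[(g\cdot q^j_i)(S)\cap(g\cdot q^j_{i'})(S)=(g\cdot x_j)^{-1}\bigl((g\cdot y_i)(S)\cap(g\cdot y_{i'})(S)\bigr).\]
Since $i\neq i'$ are both inputs of $y$ and $y\in\staroperad^\rho(S)$, the intersection inside the parentheses is empty, hence so is the left-hand side. As $g\in G$ and the admissible pair $i\neq i'$ were arbitrary, this shows $q^j\in\staroperad^\rho(S)$ for every $j$, completing the proof.

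There is no serious obstacle here: the content is entirely bookkeeping, translating operadic composition and components in $\mathscr{O}(\mathcal{G}^\rho)$ into plain multiplication and inversion in the group $\mathcal{G}^\rho$, together with the elementary fact that bijections preserve disjointness. The one point worth stating carefully is that the $G$-action on $\mathcal{G}^\rho$ being by group automorphisms is exactly what lets $g\cdot(-)$ distribute over the product $x_j^{-1}y_i$.
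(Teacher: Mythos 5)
Your proposal is correct and follows essentially the same route as the paper: identify the quotient components as $q^{\alpha(i)}_i = x_{\alpha(i)}^{-1}y_i$ in $\mathcal{G}^\rho$, then apply the invertible affine map $(g\cdot x_{\alpha(i)})^{-1}$ to transfer the empty intersection $(g\cdot y_i)(S)\cap(g\cdot y_{i'})(S)=\emptyset$ to the components of $q^j$, handling $j\notin\im(\alpha)$ via the nullary element. The extra remarks about the diagonal $G$-action by group automorphisms and the vacuous cases are fine bookkeeping but introduce nothing beyond the paper's argument.
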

\begin{proof}
    Let $\alpha: \ar(y)\to \ar(x)$ be the map that structures the division, so for each $i\in\ar(y)$ we have that \[y_i=x_{\alpha(i)}\circ q^{\alpha(i)}_i\] where $q^{\alpha(i)}\in \ambientoperad^\rho(S)$.
    So as elements of $\mathcal{G}^\rho$ we have that \[q^{\alpha(i)}_i=x_{\alpha(i)}^{-1} y_i.\]

    To show that $q^j\in \staroperad^\rho(S)$ all $j$, first note that if $j\notin \im(\alpha)$, then $q^j=\ast$ since the operad is reduced.
    Otherwise, let $j\in \im(\alpha)$, and suppose $i,i^{\prime}\in \alpha^{-1}(j)$ and $i\neq i^{\prime}$.
    Since $y\in\staroperad^\rho(S)$ we know that for all $g\in G$ that \[(g\cdot y_i)(S)\cap (g\cdot y_{i^{\prime}})(S)=\emptyset.\]
    By applying the homeomorphism $(g\cdot x_{\alpha(i)}^{-1})$ we get that \[(g\cdot q_i^j)(S)\cap(g\cdot q_{i^{\prime}}^j)(S)=\emptyset\] and conclude $q^j\in \staroperad^\rho(S)$ all $j$.
\end{proof}

The following is then a critical characterization identifying when elements divide each other. We will often use this result without comment in the future. 

\begin{corollary}{}{}
    If $S$ is a $G$-invariant star domain and $x,y\in \staroperad^\rho(S)$, then $x$ divides $y$ in $\staroperad^\rho(S)$ if and only if for each $i\in \ar(y)$, there exists a $j\in \ar(x)$ such that \[y_i(S) \subseteq x_j(S).\]
\end{corollary}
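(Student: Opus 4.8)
The plan is to deduce this directly from the two preceding lemmas together with the geometric characterization of divisibility in $\ambientoperad^\rho(S)$, specialized to the case $H=G$. The key observation is that when $H=G$ the coset space $H/G$ appearing in that lemma is a single point, so a complete set of representatives may be taken to be the identity element alone; the condition "$(g_k\cdot y_i)(S)\subseteq(g_k\cdot x_j)(S)$ for all $g_k$" then collapses to the single inclusion $y_i(S)\subseteq x_j(S)$. Since $S$ is assumed $G$-invariant, the geometric characterization lemma applies with $H=G$, so $x$ divides $y$ in $\ambientoperad^\rho(S)$ if and only if for each $i\in\ar(y)$ there exists $j\in\ar(x)$ with $y_i(S)\subseteq x_j(S)$.

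It then remains to compare divisibility in $\ambientoperad^\rho(S)$ with divisibility in the suboperad $\staroperad^\rho(S)$ for elements $x,y\in\staroperad^\rho(S)$. First I would note that if $x$ divides $y$ in $\staroperad^\rho(S)$, then since $\staroperad^\rho(S)$ is a suboperad of $\ambientoperad^\rho(S)$ the same structuring map and the same quotients exhibit $x$ as a divisor of $y$ in $\ambientoperad^\rho(S)$. For the converse, suppose $y=x\underset{\alpha}{\circ}(q^j)_{j\in\ar(x)}$ with $q^j\in\ambientoperad^\rho(S)$. Because $x,y\in\staroperad^\rho(S)$ by hypothesis, the quotient lemma above guarantees $q^j\in\staroperad^\rho(S)$ for every $j$; and as the operad composition of $\staroperad^\rho(S)$ is the restriction of that of $\ambientoperad^\rho(S)$, this same equation is a division inside $\staroperad^\rho(S)$. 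Hence the two notions of divisibility coincide on $\staroperad^\rho(S)$.

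Combining the two steps gives the statement: $x$ divides $y$ in $\staroperad^\rho(S)$ iff $x$ divides $y$ in $\ambientoperad^\rho(S)$ iff the geometric condition $y_i(S)\subseteq x_j(S)$ holds for a suitable choice of $j$ for each $i$. I do not anticipate a genuine obstacle here, as all the content is carried by the earlier lemmas; the only points requiring a sentence of care are the reduction of the many-representative coset condition to a single inclusion when $H=G$, and the verification that the quotients produced by the geometric lemma in $\ambientoperad^\rho(S)$ actually witness the division within the suboperad $\staroperad^\rho(S)$, which is exactly what the quotient lemma provides.
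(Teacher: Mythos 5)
Your proposal is correct and matches the paper's intended argument: the corollary is meant to follow directly from the geometric divisibility lemma specialized to $H=G$ (so the coset condition collapses to the single inclusion $y_i(S)\subseteq x_j(S)$) together with the quotient lemma guaranteeing that the quotients lie in $\staroperad^\rho(S)$, so the division in $\ambientoperad^\rho(S)$ is in fact a division in $\staroperad^\rho(S)$. No gaps; this is essentially the same route the paper takes.
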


A consequence of these results is that we can geometrically describe common divisors. Suppose that $a,b \in \staroperad^\rho(S)$ are divisors for some element $x\in \staroperad^\rho(S)$. A common divisor $c\in\staroperad^\rho(S)$ is then any element such that the images $c_i(S)$ fit between $x_j(S)$ and $a_{j^\prime}(S)\cap b_{j^{\prime\prime}}(S)$.
The little cube operads are a particularly nice operad to work with in this respect because the intersections uniquely define a common divisor element. This, however, fails for more general shapes. Since intersections of shapes are so important, we will define some notation now to more easily talk about them in the future.

\begin{definition}\label{def:intersectioncorrespondence}
    Given elements $x,y\in \ambientoperad^\rho(S)$, the \emph{intersection correspondence $\cap_{x,y}$} on $\ar(x)\times\ar(y)$ is given by $$i\cap_{x,y}j\text{ if }x_i(S)\cap y_j(S) \neq \emptyset.$$
    For any set $I \subseteq \ar(x)$, we will define the set \[c_{x,y}(I)=\set[\bigg]{j\in \ar(y)\given i\cap_{x,y} j \text{ for some } i\in I}\] and use the shorthand \[c_{x,y}(i):= c_{x,y}(\set{i}).\]
\end{definition}

\begin{definition}\label{def:intersectionrelation}
    For an element $x\in \ambientoperad^\rho(S)$, the \emph{intersection relation $\cap_{x}$} on $\ar(x)$ is given by $$i\cap_{x}j:= i\cap_{x,x} j.$$
\end{definition}

\section{Additivity of Equivariant Framed Little Disks}\label{sec:additivity}
Let us start this section with an observation.
Given a dilation representation $\rho:\mathcal{G}\to\DL(V_\bullet,V_\bullet^{\prime})$ where $V_\bullet^{\prime}:J\to \subspace{V_\bullet^{\prime}}$.
Recall that we have that \[\DL(V_\bullet\cap V_j^{\prime},V_j^{\prime})\hookrightarrow \DL(V_\bullet,V_\bullet^{\prime}).
\] We will denote the pullback of $\rho$ along this inclusion by \[\rho_j: \mathcal{G}_j\to \DL(V_\bullet\cap V_j^{\prime},V_j^{\prime}).\] It is not difficult to show that we have that $\rho = \times_{j\in J}\rho_j$.
In other words, any dilation breaks into spherical dilations.
We will prove that we have an additivity theorem for spherical dilations and then discuss how the general case works.

Throughout this section we will work with two spherical dilation representations \[\rho:\mathcal{G}\to\DL(V_\bullet,V_{tr})\text{, and }\psi:\mathcal{H}\to\DL(W_\bullet,W_{tr})
\] and open balls $B_V:=B_V(0;\epsilon)$, $B_W:=B_W(0;\epsilon^{\prime})$ for some $\epsilon,\epsilon^{\prime}>0$. There are injective operad maps \begin{align*}
    i_V: \staroperad^\rho(B_V) & \to \staroperad^{\rho\times \psi}(B_V\times B_W) \\ (x_i)                  & \mapsto (x_i\times \id_{\psi})\shortintertext{and,}
    i_W: \staroperad^\psi(B_W) & \to \staroperad^{\rho\times \psi}(B_V\times B_W) \\ (y_j) &\mapsto ( \id_{\rho}\times y_j)
\end{align*} which interchange. This means we get an induced map \[\staroperad^\rho(B_V)\otimes \staroperad^\psi(B_W)\to \staroperad^{\rho\times\psi}(B_V\times B_W)\] which we will denote by $\phi$ throughout this section. 
Our goal for this section is to prove the following.

\begin{theorem}\label{thm:basicadditivity}
    The induced map \[\phi: \staroperad^\rho(B_V)\otimes \staroperad^\psi(B_W)\to \staroperad^{\rho\times\psi}(B_V\times B_W)\] is a weak equivalence of $G$-operads.
\end{theorem}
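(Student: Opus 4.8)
The strategy follows the paper's own roadmap: rather than show $\phi$ is injective (it isn't, as the introduction warns), I would isolate the \emph{additive core} $\additivecore \subseteq \staroperad^\rho(B_V)\otimes\staroperad^\psi(B_W)$, prove $\phi$ restricts to an embedding on $\additivecore$, and then exhibit deformation retractions of both source and target onto (copies of) $\additivecore$. First I would describe $\additivecore$ concretely using the ``trees in superposition'' machinery of \Cref{sec:tensor}: an element of the tensor is represented by a proper tree in superposition with $P$-data at white labels and $Q$-data at black labels, and the additive core should consist of those represented by \emph{core} trees (height $\leq 2$, i.e. in $\coretreegroupoid$) whose geometric realizations in $\staroperad^{\rho\times\psi}(B_V\times B_W)$ are configurations of honest products $x_i\times y_i$ of disks that are \emph{grid-like} — the $V$-shadows and $W$-shadows of the little disks interact only through the product structure. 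Equivalently, $\additivecore$ is the subcollection on which the intersection correspondences $\cap_{x,y}$ of \Cref{def:intersectioncorrespondence} in the two factors are ``independent,'' so that a configuration of product-boxes is recorded faithfully.

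The embedding statement is the technical heart, and I expect it to be the main obstacle. The plan is to run the \Barata-\Moerdijk~induction sketched in \Cref{sec:divisibility}: given $u,v\in\additivecore$ with $\phi(u)=\phi(v)$, and $\phi$ already injective in lower arity, I would produce a common divisor (or, failing that, a finite \emph{chain} of divisors as the introduction foreshadows) in $\staroperad^{\rho\times\psi}(B_V\times B_W)$ and lift it back through $\phi$. Here the key inputs are: (i) \Cref{lem:unaryinjection}, which handles arity $1$ since $i_V(1)\cap i_W(1)=\{\id\}$; (ii) left-cancellability of $\ambientoperad^{\rho\times\psi}(B_V\times B_W)$ and $\staroperad^{\rho\times\psi}(B_V\times B_W)$, which lets the induction descend; and (iii) the geometric characterization of divisibility (the Corollary after \Cref{def:intersectionrelation}): $z$ divides $w$ iff each $w_i(S)$ sits inside some $z_j(S)$. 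Using that $B_V\times B_W$ is itself a product box, the $V$- and $W$-projections of a configuration coming from the additive core can be analyzed separately, and the ``grid-like'' hypothesis is exactly what guarantees that the common divisors one finds in each projection assemble to an element of the tensor (rather than of something larger). I would also use that $\staroperad^\rho(B_V)$ and $\staroperad^\psi(B_W)$ are algebraically axial (\Cref{def:axial}), so elements are determined by their components $x_i\in P(1)$, which is what makes the component-by-component comparison in the induction legitimate; and I would invoke the embedding lemmas of \Cref{appendix:topology lemmas} to upgrade the resulting continuous bijection onto its image to a homeomorphism.

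For the retraction half, on the target side I would use a scaling homotopy as in \Cref{lem:comparisonfromlarger,lem:comparisonfromsmaller}: shrink each little product-disk toward its center, correcting via \Cref{lem:deformationcorrection} to get an honest deformation of $\staroperad^{\rho\times\psi}(B_V\times B_W)$ onto the subspace of grid-like configurations, which is the image $\phi(\additivecore)$; one must check the intersection conditions are preserved along the homotopy, exactly as in those lemmas. On the source side I would build a homotopy on $\staroperad^\rho(B_V)\otimes\staroperad^\psi(B_W)$, again by shrinking, that pushes a general tree-in-superposition representative into a core tree — intuitively, once all the little disks are small enough, a height-$>2$ superposition can be re-bracketed as a height-$\leq 2$ one — and deformation-retracts the tensor onto $\additivecore$; the \Cref{appendix: correction lemmas} correction lemmas convert the a priori $[0,\infty)$-parametrized shrinking into a genuine $[0,1]$-deformation, and $G$-invariance of all the scaling data (using the partial sections $s_\rho,s_\psi$ and that the dilation parts carry trivial $G$-action) keeps everything equivariant. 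Combining: $\staroperad^\rho(B_V)\otimes\staroperad^\psi(B_W)\simeq\additivecore\xrightarrow{\ \phi|_{\additivecore}\ }\phi(\additivecore)\simeq\staroperad^{\rho\times\psi}(B_V\times B_W)$, with the middle map a homeomorphism, which gives the weak equivalence of $G$-operads. Finally, by \Cref{spherical equivalence lemma} and \Cref{thm:maincomparison} the spherical hypothesis is harmless and $\staroperad^{\rho\times\psi}(B_V\times B_W)\simeq\mathcal{D}^{\rho\times\psi}_{V\oplus W}$, so \Cref{thm:maindiskadditiviy} follows from the general-dilation reduction $\rho=\times_j\rho_j$ noted at the start of the section.
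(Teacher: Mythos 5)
Your high-level roadmap (isolate a core, embed it, retract both sides onto it) matches the paper's, but two load-bearing ingredients are missing, and without them the plan does not go through. First, your description of $\additivecore$ as ``height $\leq 2$ trees with grid-like image'' omits the quantitative separation condition that the paper builds into the core: the top-level factors must lie in the \emph{separated} operads $\separateddiskoperad^\rho(B_V)$, $\separateddiskoperad^\psi(B_W)$, i.e.\ each configuration stays disjoint and inside the ball after a $5$-fold enlargement. That condition is not cosmetic; it is exactly what powers \cref{lem:criticalgeometricdisks,lem:criticalgeometricdiskstwo}, which let the paper manufacture the chain of divisors $x \triangleright y \hookleftarrow x\triangledown y \hookrightarrow x \triangleleft y$ (\cref{lem:bubbletransfer}) needed in the inductive injectivity argument. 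You correctly note that a common divisor may not exist and that a ``chain of divisors'' is foreshadowed, but you give no mechanism for producing one; analyzing the $V$- and $W$-projections separately and hoping ``grid-likeness'' assembles the divisors is essentially the cube argument, which is precisely what fails for disks. Without the $5$-separation built into the definition of the core, the induction in \cref{theorem: injective on core} has no way to descend.

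Second, on the source side you treat the retraction of the tensor onto $\additivecore$ as a routine shrinking homotopy that ``re-brackets'' a deep tree into a height-$\leq 2$ one. The paper flags this as the genuinely delicate point: the shrinking homotopy on the tensor lifts the one on $\staroperad^{\rho\times\psi}(B_V\times B_W)$, and it is easy to see that the \emph{image} under $\phi$ eventually lands in $\phi(\additivecore)$ --- but since $\phi$ is not injective, that does not show the tensor element itself lands in $\additivecore$. Closing this gap is the content of the ``critical elements'' subsection: one defines the collections $\mathscr{C}$ and $\mathscr{D}$, shows every sufficiently shrunk element has a critical image with a core representative (\cref{lemma: exists core representation}), and then proves by a separate induction (using \cref{lemma: common critical refinement} and the already-established injectivity on $\additivecore$) that a shrunk tensor element \emph{equals} its core representative; only then do \cref{lemma: weak deformation lemma} and \cref{lem:weakdeformationcorrection} produce the weak deformation retracts of both source and target onto $\additivecore$, compatibly over $\phi$. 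Your proposal neither identifies this obstruction nor supplies an argument for it, so the source-side half of the equivalence is unproved as written. (Minor point: the axiality used in the injectivity proof is that of the target $\staroperad^{\rho\times\psi}(B_V\times B_W)$, not of the two factors.)
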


In order to prove that $\phi$ is a weak equivalence, we will first show that there is a sub-collection $\additivecore \subseteq \staroperad^{\rho}(B_V)\otimes\staroperad^{\psi}(B_W)$ such that $\phi$ restricted onto $\mathcal{K}$ is an embedding.
We will then show that there exists a weak deformation retract of $\staroperad^{\rho\times\psi}(B_V\times B_W)$ onto $\additivecore$ \emph{which lifts to a weak deformation retract of $\staroperad^\rho(B_V)\otimes \staroperad^\psi(B_W)$ onto $\additivecore$}, thus proving \Cref{thm:basicadditivity}.

\subsection{Injectivity on the Additive Core}
Given any affine dilation map $x\in \DA(V)$, the image $x(B_V)$ is an open ball, and we will write $\Rad{x;B_V}$ for its radius.
The following is a critical geometric lemma.

\begin{lemma}\label{lem:criticalgeometricdisks}
    Suppose $x,y_1,y_2\in \DA(V)$ and $\lambda > 1$ where \begin{enumerate}
        \item $x(B_V)\cap y_i(\vB[V])\neq \emptyset$ for each $i$, and
        \item $y_1(\lambda \vB[V])\cap y_2(\lambda \vB[V]) = \emptyset$.
    \end{enumerate}
    Then \[\Rad{x;B_V} > \frac{\lambda - 1}{2}\bigg(\Rad{y_1;B_V}+\Rad{y_2;\vB[V]}\bigg).
    \] Moreover, for any $\mu \geq \frac{4}{\lambda - 1}+3$ we have for each $i$, \[y_i(\lambda B_V) \subseteq x(\mu B_V).\]
\end{lemma}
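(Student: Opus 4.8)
The plan is to reduce everything to elementary estimates about radii of balls in $V$, since affine dilation maps send balls to balls. Write $r = \Rad{x;B_V}$, $r_1 = \Rad{y_1;B_V}$, $r_2 = \Rad{y_2;B_V}$, and let $c, c_1, c_2$ be the centers of $x(B_V)$, $y_1(B_V)$, $y_2(B_V)$ respectively. Then $y_i(\lambda B_V)$ is the ball of radius $\lambda r_i$ about $c_i$. Condition (1) says $\norm{c - c_i} < r + r_i$ for each $i$, and condition (2) says $\norm{c_1 - c_2} \geq \lambda(r_1 + r_2)$. Combining these via the triangle inequality, $\lambda(r_1 + r_2) \leq \norm{c_1 - c_2} \leq \norm{c_1 - c} + \norm{c - c_2} < (r + r_1) + (r + r_2) = 2r + (r_1 + r_2)$, which rearranges to $2r > (\lambda - 1)(r_1 + r_2)$, i.e. the first claimed inequality $r > \tfrac{\lambda-1}{2}(r_1 + r_2)$.

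For the second part, I want to show $y_i(\lambda B_V) \subseteq x(\mu B_V)$, i.e. the ball of radius $\lambda r_i$ about $c_i$ is contained in the ball of radius $\mu r$ about $c$. This holds iff $\norm{c - c_i} + \lambda r_i \leq \mu r$. Estimate the left side using (1): $\norm{c - c_i} + \lambda r_i < (r + r_i) + \lambda r_i = r + (\lambda + 1) r_i$. Now I need to control $r_i$ in terms of $r$. From the first part applied symmetrically (or directly from $2r > (\lambda-1)(r_1+r_2) \geq (\lambda - 1) r_i$) we get $r_i < \tfrac{2}{\lambda - 1} r$. Substituting, $\norm{c - c_i} + \lambda r_i < r + (\lambda+1)\cdot \tfrac{2}{\lambda-1} r = \left(1 + \tfrac{2(\lambda+1)}{\lambda - 1}\right) r = \left(3 + \tfrac{4}{\lambda-1}\right) r$. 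Hence any $\mu \geq \tfrac{4}{\lambda-1} + 3$ works, which is exactly the stated bound.

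I should be slightly careful about one bookkeeping point: I am using that an affine dilation $x \in \DA(V)$ acting on a ball $B_V = B_V(0;\epsilon)$ produces a genuine ball, and that $x(\mu B_V)$ and $x(\lambda B_V)$ are the balls with the same center as $x(B_V)$ but radii scaled by $\mu$ and $\lambda$ — this is immediate from the structure of $\DA(V) = \DL(V)\ltimes V$, where the linear part is an orthogonal map composed with a positive scalar, so it scales radii and does not move centers relative to each other. The inclusion of balls then reduces to the scalar inequality on radii and centers used above. There is no real obstacle here; the only mild subtlety is making sure the strict vs. non-strict inequalities line up (condition (2) is stated with $=\emptyset$ for the closed-ball-like sets $y_i(\lambda B_V)$, but since these are open balls "disjoint" gives $\norm{c_1 - c_2} \geq \lambda(r_1+r_2)$, the non-strict version, which is all that is needed). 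I would write the argument in exactly the order above: set up notation, derive the center inequalities from (1) and (2), prove the radius bound, then bound $r_i$ by $r$ and conclude the containment.
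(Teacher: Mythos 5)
Your proof is correct and takes essentially the same approach as the paper: both arguments reduce everything to the centers and radii of the image balls, and the algebra yielding the threshold $\mu \geq 3+\tfrac{4}{\lambda-1}$ is identical. The only difference is that the paper justifies the first inequality informally by appealing to the extremal configuration where $y_1(B_V)$ and $y_2(B_V)$ touch $x(B_V)$ tangentially at antipodal points (illustrated with a figure), whereas your triangle-inequality computation on the centers proves the same bound directly and a bit more rigorously.
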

\begin{proof}
    Fix the radii of $y_1(B_V)$ and $y_2(B_V)$.
    The furthest the two disks $y_1(B_V)$ and $y_2(B_V)$ can be and still intersect $x(B_V)$ is (in the limit) if they intersect it tangentially at antipode points.
    Thus, it follows that the smallest that the radius of $x(B_V)$ can be while still keeping $y_1(\lambda B_V)\cap y_2(\lambda B_V) = \emptyset$ is if the diameter of $x(B_V)$ is more than $$(\lambda-1)\rad(y_1) + (\lambda -1)\rad(y_2).
    $$ See \cref{lem:criticalgeometricdisks figure 1} for a picture of this situation.

    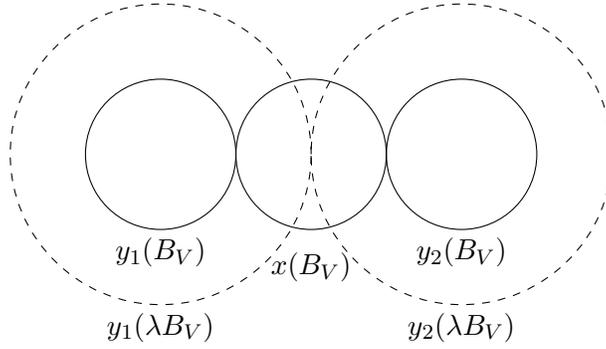
\begin{figure}[H]
        \centering
        \begin{tikzpicture}
            \draw (0,0) circle (1);
            \draw[dashed] (0,0) circle (2);
            \draw (2,0) circle (1);
            \draw (4,0) circle (1);
            \draw[dashed] (4,0) circle (2);
            \node at (0,-1.3) {$y_1(B_V)$};
            \node at (0,-2.3) {$y_1(\lambda B_V)$};
            \node at (2,-1.5) {$x(B_V)$};
            \node at (4,-1.3) {$y_2(B_V)$};
            \node at (4,-2.3) {$y_2(\lambda B_V)$};
        \end{tikzpicture}
        \caption{The extreme case}
        \label{lem:criticalgeometricdisks figure 1}
    \end{figure}

    Hence, we deduce that \[\Rad{x;B_V} > \frac{\lambda - 1}{2}\bigg(\Rad{y_1;B_V}+\Rad{y_2;\vB[V]}\bigg).
    \] The second statement follows as again, we can think about the case when $y_i(B_V)$ are the furthest they can be apart.
    In order for $x(\mu B_V)$ to cover $y_1(\lambda B_V)\cup y_2(\lambda B_V)$, we need that for each $i$ that \[(\mu-1) \Rad{x;B_V} \geq (\lambda + 1)\Rad{y_i;B_V}.
    \] Since we have that $\Rad{x;B_V} > \frac{\lambda -1}{2}\Rad{y_i;B_V}$, we get that \begin{align*}
        (\mu -1)\Rad{x;B_V} > \frac{(\mu -1)(\lambda - 1)}{2}\Rad{y_i;B_V}
    \end{align*}
    Hence, the condition is met as long as \begin{align*}
        \frac{(\mu -1)(\lambda - 1)}{2} & \geq \lambda + 1                                                                                            \\
        \mu - 1                         & \geq \frac{2(\lambda +1)}{\lambda - 1}                                                                      \\
                                        & = \frac{2(\lambda - 1 + 2)}{\lambda -1}                                                                     \\
                                        & = 2+\frac{4}{\lambda - 1} \shortintertext{and so we get $y_i(\lambda B_V) \subseteq x(\mu B_V)$ as long as}
        \mu                             & \geq 3 + \frac{4}{\lambda -1}
    \end{align*}
\end{proof}

Note that if we take $\lambda=\mu=5$ then the second condition works.
This leads to the following definition which we will show has useful division properties as a consequence of \cref{lem:criticalgeometricdisks}.

\begin{definition}{}{}
    The \emph{separated $\rho$-framed little disk operad} $\separateddiskoperad^\rho(B_V)$ is a suboperad of $\staroperad^\rho(B_V)$ given by the following \[\separateddiskoperad^\rho(B_V):= \set[\bigg]{x\in \staroperad^\rho(B_V)\given x\circ(5\id)_{i\in\ar(x)}\in \staroperad^\rho(B_V)\text{ if } \abs{\ar(x)}>1}\]
    Here we interpret $\staroperad^\rho(B_V) \subseteq \ambientoperad^\rho$ so the composition $x\circ(5\id)_{i\in\ar(x)}$ happens in $\ambientoperad^\rho$.
\end{definition}

\begin{remark}
    Geometrically, the operad $\separateddiskoperad^\rho(B_V)$ can be described as tuples $x=(x_i)_{i\in\ar{x}}$ where the disks $x_i(B_V)$ are such that if we enlarge them $5$ times, then they don't overlap, and they are still contained within $B_V$. See \cref{fig:seperated disks} for a picture of this. 
\end{remark}

\begin{figure}[H]
    \centering
    \begin{tikzpicture}
        \draw (0,0) circle (3);
        \draw (-1.5,0) circle (0.5);
        \draw[dashed] (-1.5,0) circle (1.5);
        \draw (1,1) circle (0.25);
        \draw[dashed] (1,1) circle (1);
        \draw (1.5,-1) circle (0.25);
        \draw[dashed] (1.5,-1) circle (1);
    \end{tikzpicture}
    \caption{An element of $\separateddiskoperad^\rho(B_V)$.}
    \label{fig:seperated disks}
\end{figure}

Recall the intersection correspondence of \cref{def:intersectioncorrespondence}.
Given $x,y\in \staroperad^\rho(B_V)$, we will define the following sets which partition the indexing sets of $x$ and $y$.
\begin{align*}
    L^1_{x,y} & := \set{i\in \ar(x)\given \text{ if } i\cap_{x,y}j\text{ then } \rad(x_i)\leq \rad(y_j)} \\
    L^2_{x,y} & := \set{j\in \ar(y)\given \text{ if } i\cap_{x,y}j\text{ then } \rad(x_i)> \rad(y_j)}    \\
    R^1_{x,y} & := \ar(x)\backslash L_{x,y}^1                                                            \\
    R^2_{x,y} & := \ar(y)\backslash L_{x,y}^2.
\end{align*}

\begin{example}
    As an example of what these sets correspond to, consider \cref{fig:seperatedaritysets}. 

    \begin{figure}[H]
        \centering
        \begin{tikzpicture}
            \draw (0,0) circle (3);
            \draw (-1.5,0) circle (1) node {$x_1$};
            \draw (1,1) circle (1) node {$x_2$};
            \draw[dashed] (1.5,-1) circle (0.5) node {$y_2$};
            \draw[dashed] (-0.5,1) circle (1) node {$y_1$};
            \draw (0.8,-1.8) circle (0.7) node {$x_3$};
        \end{tikzpicture}
        \caption{Example for indexing partitions}
        \label{fig:seperatedaritysets}
    \end{figure}
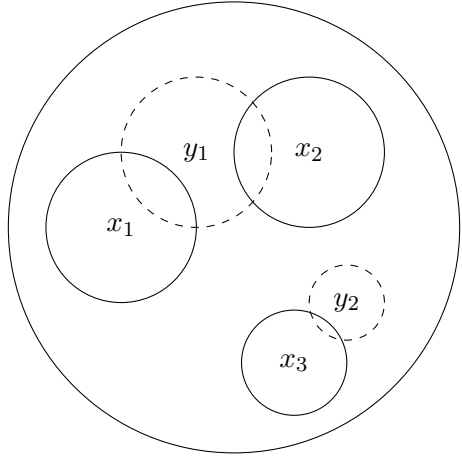

    Here, we have that \begin{align*}
        \vL[x,y,upper=1] &= \set{1,2} \\ 
        \vL[x,y,upper=2] &= \set{2} \\
        \vR[x,y,upper=1] &= \set{3} \\
        \vR[x,y,upper=2] &= \set{1}. 
    \end{align*} 
    The ``$L$'' sets correspond to indexes of disks which are smaller than every disk it intersects, while ``$R$'' sets are the complement. There is an asymmetry in the definition to ensure that there is a preferred choice for when disks of equal radius intersect. This is needed for \cref{lem:criticalgeometricdiskstwo}.
\end{example}

\begin{lemma}\label{lem:criticalgeometricdiskstwo}
    Suppose $x,y\in \separateddiskoperad^\rho(B_V)$ and under the intersection correspondence $\cap_{x,y}$ every element of $\ar(x)$ corresponds to some element in $\ar(y)$ and vice versa.
    Then we have the following
    \begin{enumerate}
        \item If $\abs{c_{x,y}(i)}>1$ then $i\in \vR[x,y,upper=1]$ and for all $j\in c_{x,y}(i)$ we have that $j\in \vL[x,y,upper=2]$ and \[y_j(5B_V) \subseteq x_i(5B_V).\] Moreover, we have that
        \item $c_{x,y}(L_{x,y}^1)=R_{x,y}^2$, and $c_{x,y}(R_{x,y}^1)=L_{x,y}^2$.
    \end{enumerate}
    Similar statements hold for $c_{y,x}$.
\end{lemma}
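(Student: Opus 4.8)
The plan is to isolate from \cref{lem:criticalgeometricdisks} a single symmetric ``radius domination'' statement and then derive both assertions by bookkeeping with the index sets. So first I would prove the following self-contained claim, making no reference to the sets $L^i_{x,y}$, $R^i_{x,y}$ nor to the lemma being proved: \emph{if $\abs{c_{x,y}(i)}>1$, then for every $j\in c_{x,y}(i)$ we have $\rad(x_i)>\rad(y_j)$ and $y_j(5B_V)\subseteq x_i(5B_V)$.} Given such a $j$, choose $j'\in c_{x,y}(i)$ with $j'\neq j$; then $\abs{\ar(y)}\geq 2$, so $y\circ(5\id)_{k\in\ar(y)}\in\staroperad^\rho(B_V)$ by definition of $\separateddiskoperad^\rho(B_V)$, which gives $y_j(5B_V)\cap y_{j'}(5B_V)=\emptyset$, while $x_i(B_V)$ meets both $y_j(B_V)$ and $y_{j'}(B_V)$. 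Feeding $x_i, y_j, y_{j'}$ into \cref{lem:criticalgeometricdisks} with $\lambda=\mu=5$ (admissible since $5\geq \tfrac{4}{5-1}+3$) yields $\rad(x_i)>2(\rad(y_j)+\rad(y_{j'}))>\rad(y_j)$ together with the containment. The claim is visibly symmetric in $x$ and $y$, and it is the only place the geometry enters; everything after it is combinatorics.

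Next I would deduce part (1). Assume $\abs{c_{x,y}(i)}>1$. Some $j\in c_{x,y}(i)$ then has $\rad(x_i)>\rad(y_j)$, so $i$ fails the defining condition of $L^1_{x,y}$, i.e. $i\in R^1_{x,y}$. For a fixed $j\in c_{x,y}(i)$ I would show $c_{y,x}(j)=\{i\}$: it certainly contains $i$, and if it had a further element then the $x\leftrightarrow y$ mirror of the claim above would force $\rad(y_j)>\rad(x_i)$, contradicting $\rad(x_i)>\rad(y_j)$. Hence $i$ is the only index of $x$ that $y_j$ meets, and since $\rad(x_i)>\rad(y_j)$ the index $j$ satisfies the defining condition of $L^2_{x,y}$, so $j\in L^2_{x,y}$; the containment $y_j(5B_V)\subseteq x_i(5B_V)$ is part of the claim. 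The corresponding statements for $c_{y,x}$ are the mirror argument.

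Then part (2). For the forward inclusions: if $i\in L^1_{x,y}$ and $j\in c_{x,y}(i)$ then $\rad(x_i)\leq\rad(y_j)$, so $j$ fails the defining condition of $L^2_{x,y}$, giving $c_{x,y}(L^1_{x,y})\subseteq R^2_{x,y}$; if $i\in R^1_{x,y}$ and $j\in c_{x,y}(i)$, then either $\abs{c_{x,y}(i)}>1$ and $j\in L^2_{x,y}$ by part (1), or $c_{x,y}(i)=\{j\}$, in which case $i\notin L^1_{x,y}$ forces $\rad(x_i)>\rad(y_j)$ and the same ``$c_{y,x}(j)=\{i\}$'' argument gives $j\in L^2_{x,y}$; so $c_{x,y}(R^1_{x,y})\subseteq L^2_{x,y}$. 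For the reverse inclusions I use the hypothesis $c_{x,y}(\ar(x))=\ar(y)$ (the ``and vice versa''): since $\ar(x)=L^1_{x,y}\sqcup R^1_{x,y}$ and $\ar(y)=L^2_{x,y}\sqcup R^2_{x,y}$ are disjoint unions and the two forward inclusions land in the two disjoint pieces, any $j\in R^2_{x,y}$ lies in $c_{x,y}(L^1_{x,y})\cup c_{x,y}(R^1_{x,y})$ but cannot lie in $c_{x,y}(R^1_{x,y})\subseteq L^2_{x,y}$, so $j\in c_{x,y}(L^1_{x,y})$; symmetrically for $L^2_{x,y}$. Hence both inclusions are equalities, and the $c_{y,x}$ versions follow by symmetry.

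The main obstacle is organizational rather than computational: the naive proof of part (1) for $c_{x,y}$ would invoke part (1) for $c_{y,x}$ and conversely, which is circular. Isolating the radius-domination claim and proving it directly from \cref{lem:criticalgeometricdisks}, with no mention of $L^i/R^i$, breaks this loop, after which every remaining step is monotone. A small point to keep in mind is that an inclusion of open balls $A\subseteq B$ gives $\rad(A)\leq\rad(B)$, which is what converts the containment conclusions back into the radius inequalities used in the contradictions above.
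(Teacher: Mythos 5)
Your proposal is correct and follows essentially the same route as the paper: both apply \cref{lem:criticalgeometricdisks} with $\lambda=\mu=5$ (using that the $5$-fold enlargements of the disks of an element of $\separateddiskoperad^\rho(B_V)$ are disjoint) to get the radius domination $\rad(x_i)>2\rad(y_j)$ and the containment $y_j(5B_V)\subseteq x_i(5B_V)$, and then conclude $c_{y,x}(j)=\set{i}$ before sorting indices into the $L$/$R$ sets. Your write-up simply makes explicit the bookkeeping and the symmetry argument that the paper leaves implicit, including the non-circular way of invoking the mirrored claim.
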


\begin{proof}
    The first statement follows from the second part of \cref{lem:criticalgeometricdisks}, while the second statement follows from the first part of \cref{lem:criticalgeometricdisks}.
    In particular, if $\abs{c_{x,y}(i)}>1$ then for all $j\in c_{x,y}(i)$ we have \[\Rad{x_i,B_V}>2\Rad{y_j,B_V},\] and we conclude that $c_{y,x}(j)=\set{i}$.
\end{proof}

We will use this lemma to construct certain elements in $\staroperad^\rho(B_V)$ from $x,y\in \separateddiskoperad^\rho(B_V)$.
In particular, we will define elements $x\triangleleft y, x \triangleright y, x \triangledown y \in \staroperad^\rho(B_V)$ via the following.
\begin{align*}
    (x\triangleright y)_i & := \begin{cases*}
                                   x_i & if  $i\in L_{x,y}^1$ \\ x_i\circ 5\id & if  $i\in R_{x,y}^1$
                               \end{cases*} \\
    (x\triangleleft y)_j  & := \begin{cases*}
                                   y_j & if  $j\in L_{x,y}^2$ \\ y_j\circ 5\id & if  $j\in R_{x,y}^2$
                               \end{cases*} \\
    (x\triangledown y)_i  & := \begin{cases*}
                                   x_i & if  $i\in L_{x,y}^1$ \\ y_i & if  $i\in L_{x,y}^2$
                               \end{cases*}
\end{align*}
Here, $\ar(x\triangleright y)=\ar(x), \ar(x\triangleleft y)=\ar(y)$, and $\ar(x\triangledown y)=\vL[x,y]^1\cup\vL[x,y]^2$ which we put the induced order on.

\begin{example}
    Let us geometrically describe what these constructed elements are doing. For this example consider \cref{fig bubbleexample}. In \cref{fig:bubble example 1} we have two examples of \(x,y\in \separateddiskoperad^\rho(B_V)\) and for the disks that are larger than those they intersect, we have drawn in dotted lines the disk that is enlarged by $5$ times. Such an enlarged disk covers the disks that intersect the original disk. The element $x\triangleright y$ is then shown in \cref{fig:bubble example 2}. This is just the element $x$ where we have replaced those disks with the enlarged version. The element $x\triangleleft y$ is similar but for the element $y$ and is shown in \cref{fig:bubble example 4}. The element $x\triangledown y$ is shown in \cref{fig:bubble example 3} and this is the disks of $x$ and $y$ which are the smaller disks in each intersection. The key observation about these elements is that we have a zigzag of inclusions of the disks as follows.
    \begin{center}
        \begin{tikzcd}
            &x\triangleright y \ar[dr,hookleftarrow]&&x\triangleleft y\ar[dr,hookleftarrow]& \\ 
            x\ar[ur,hookrightarrow]&&x\triangledown y\ar[ur,hookrightarrow]&&y
        \end{tikzcd}
    \end{center}
    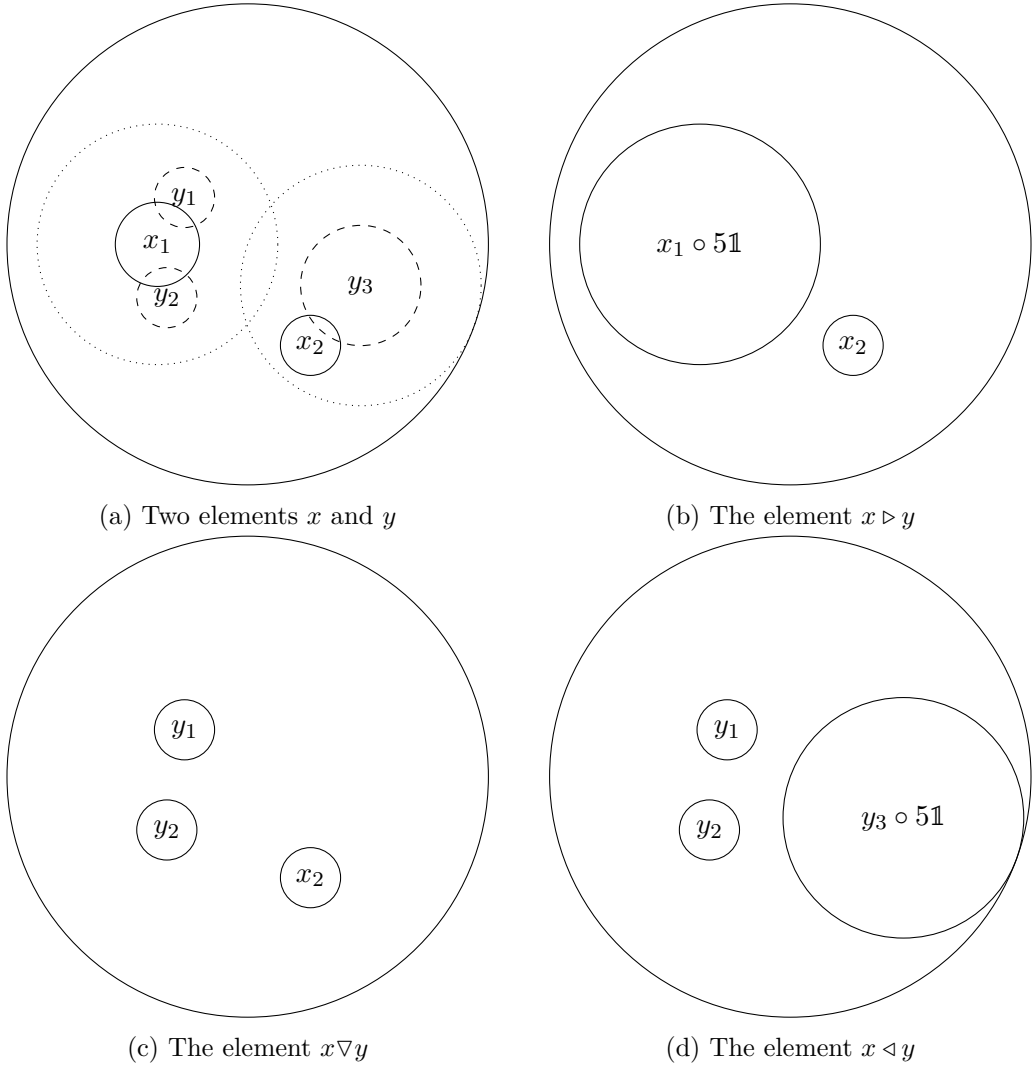
\begin{figure}[ht]
        \centering
        \begin{subfigure}{0.45\textwidth}
            \centering
            \begin{tikzpicture}[scale=0.8]
                \draw (0,0) circle (4);
                \draw (-1.5,0) circle (0.7) node {$x_1$};
                \draw[dotted] (-1.5,0) circle (2);
                \draw[dashed] (-1.5,0) ++(60:0.9) circle (0.5) node {$y_1$};
                \draw[dashed] (-1.5,0) ++(-80:0.9) circle (0.5) node {$y_2$};
                \draw[dashed] (-20:2) circle (1) node {$y_3$};
                \draw[dotted] (-20:2) circle (2);
                \draw (-20:2) ++ (230:1.3)circle (0.5) node {$x_2$};
            \end{tikzpicture}
            \caption{Two elements $x$ and $y$}
            \label{fig:bubble example 1}
        \end{subfigure}
        \begin{subfigure}{0.45\textwidth}
            \centering
        \begin{tikzpicture}[scale=0.8]
            \draw (0,0) circle (4);
            \draw (-1.5,0) circle (2) node {$x_1\circ 5\id$};
            \draw (-20:2) ++ (230:1.3)circle (0.5) node {$x_2$};
        \end{tikzpicture}
        \caption{The element $x\triangleright y$}
        \label{fig:bubble example 2}
    \end{subfigure}
        \begin{subfigure}{0.45\textwidth}
            \centering
        \begin{tikzpicture}[scale=0.8]
            \draw (0,0) circle (4);
            \draw (-1.5,0) ++(60:0.9) circle (0.5) node {$y_1$};
            \draw (-1.5,0) ++(-80:0.9) circle (0.5) node {$y_2$};
            \draw (-20:2) ++ (230:1.3)circle (0.5) node {$x_2$};
        \end{tikzpicture}
        \caption{The element $x\triangledown y$}
        \label{fig:bubble example 3}
    \end{subfigure}
    \begin{subfigure}{0.45\textwidth}
        \centering
        \begin{tikzpicture}[scale=0.8]
            \draw (0,0) circle (4);
            \draw (-1.5,0) ++(60:0.9) circle (0.5) node {$y_1$};
            \draw (-1.5,0) ++(-80:0.9) circle (0.5) node {$y_2$};
            \draw (-20:2) circle (2) node {$y_3\circ 5\id$};
        \end{tikzpicture}
        \caption{The element $x\triangleleft y$}
        \label{fig:bubble example 4}
    \end{subfigure}
    \caption{Example of constructed elements}
    \label{fig bubbleexample}
    \end{figure}
\end{example}

We can make our observations from the example rigorous by using \cref{lem:criticalgeometricdiskstwo}. Using the connection between the images of the disks and divisibility we can see that for each $i\in R_{x,y}^1$, there exists $\mu^i\in \separateddiskoperad^\rho(B_V)(c_{x,y}(i))$ such that for each $j\in c_{x,y}(i)$ we have that \[y_j=x_i\circ 5\id\circ \mu^i_j.
\] Hence, if we also set $\mu^i = \id$ for $i\in L_{x,y}^1$, we get that \[(x\triangleright y)\circ (\mu^i)_{i\in \ar(x)}= x\triangledown y,\] and by setting \[\overline{\mu}^i=\begin{cases*}\id_V& if $i\in L_{x,y}^1$ \\ \frac{1}{5}\id_V &if $i\in R_{x,y}^1$ \end{cases*}\] we have \[x = (x \triangleright y)\circ (\overline{\mu}^i)_{i\in\ar(x)}.\]We similarly have that for each $j\in \ar(y)$ there exists $\nu^j\in \separateddiskoperad^\rho(B_V)(c_{y,x}(j))$ and $\overline{\nu}^j\in\separateddiskoperad^\rho(B_V)(\set{j})$ such that \[(x\triangleleft y)\circ (\nu^j)_{j\in \ar(y)}= x\triangledown y,\] and \[y = (x\triangleleft y)\circ (\overline{\nu}^j)_{j\in \ar(y)}\]

Summarizing this discussion, we get the following lemma.

\begin{lemma}\label{lem:bubbletransfer}
    Given $x,y\in \separateddiskoperad^\rho(B_V)$ such that for each $i\in\ar(x)$, there exists $j\in \ar(y)$ such that $i\cap_{x,y} j$ and vice versa.
    There exists:
    \begin{enumerate}
        \item $x\triangleleft y, x \triangleright y, x \triangledown y \in \staroperad^\rho(B_V)$,
        \item for each $i\in \ar(x)$, elements $\mu^i\in\separateddiskoperad(V)(c_{x,y}(i))$, $\overline{\mu}^i\in\separateddiskoperad^\rho(B_V)(\set{i})$, and
        \item for each $j\in \ar(y)$, elements $\nu^j\in\separateddiskoperad^\rho(B_V)(c_{y,x}(j))$, $\overline{\nu}^j\in\separateddiskoperad^\rho(B_V)(\set{j})$,
        \item $\sigma_x,\sigma_y\in \Aut(\ar(x \triangledown y))$,
    \end{enumerate}
    such that the following hold
    \begin{align*}
        x                & = (x \triangleright y)\circ (\overline{\mu}^i)_{i\in\ar(x)}, \\
        y                & = (x\triangleleft y)\circ (\overline{\nu}^j)_{j\in \ar(y)},  \\
        x\triangledown y & = \sigma_x\cdot\pbr{(x\triangleright y)\circ (\mu^i)_{i\in \ar(x)}}            \\ &= \sigma_y\cdot \pbr{(x\triangleleft y)\circ (\nu^j)_{j\in \ar(y)}}.
    \end{align*}
\end{lemma}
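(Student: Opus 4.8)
The statement essentially records the discussion that precedes it, so the plan is to combine three ingredients: the explicit formulas for $x\triangleright y$, $x\triangleleft y$, $x\triangledown y$; the hypothesis that $x,y\in\separateddiskoperad^\rho(B_V)$; and the geometric conclusions of \cref{lem:criticalgeometricdiskstwo}, whose hypothesis on $\cap_{x,y}$ is exactly the one we are handed.

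First I would verify that the three constructed elements lie in $\staroperad^\rho(B_V)$. For $x\triangleright y$, whose $i$-th disk is $x_i(B_V)$ when $i\in L^1_{x,y}$ and $x_i(5B_V)$ when $i\in R^1_{x,y}$: since $x\in\separateddiskoperad^\rho(B_V)$, by definition the configuration obtained from $x$ by enlarging each disk fivefold already lies in $\staroperad^\rho(B_V)$, so the disks $x_i(5B_V)$ are pairwise disjoint and contained in $B_V$; because $x_i(B_V)\subseteq x_i(5B_V)$, the disks of $x\triangleright y$ inherit both properties at once. The case of $x\triangleleft y$ is symmetric, with $y$ in place of $x$. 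For $x\triangledown y$, with disks $x_i(B_V)$ for $i\in L^1_{x,y}$ and $y_j(B_V)$ for $j\in L^2_{x,y}$, disjointness within each family comes from $x,y\in\staroperad^\rho(B_V)$, and a cross intersection $x_i(B_V)\cap y_j(B_V)\ne\emptyset$ is impossible, since $i\cap_{x,y}j$ would force $\rad(x_i)\le\rad(y_j)$ and $\rad(x_i)>\rad(y_j)$ simultaneously.

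Next I would build the quotients. For $i\in R^1_{x,y}$, \cref{lem:criticalgeometricdiskstwo} gives $c_{x,y}(i)\subseteq L^2_{x,y}$ together with $y_j(5B_V)\subseteq x_i(5B_V)$ for every $j\in c_{x,y}(i)$, so I set $\mu^i_j:=(x_i\circ 5\id)^{-1}\circ y_j$ on $c_{x,y}(i)$. That last inclusion says exactly that $\mu^i_j(5B_V)\subseteq B_V$, while disjointness of the various $\mu^i_j(5B_V)$ is inherited from $y\in\separateddiskoperad^\rho(B_V)$, so $\mu^i\in\separateddiskoperad^\rho(B_V)(c_{x,y}(i))$. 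I then put $\mu^i:=\id$ for $i\in L^1_{x,y}$, and $\overline{\mu}^i:=\id$ for $i\in L^1_{x,y}$, $\overline{\mu}^i:=\tfrac{1}{5}\id$ for $i\in R^1_{x,y}$. From the formula for $x\triangleright y$, composing with $(\overline{\mu}^i)_i$ shrinks every enlarged disk back and recovers $x$, while composing with $(\mu^i)_i$ yields precisely the disk list $\{x_i:i\in L^1_{x,y}\}\cup\{y_j:j\in L^2_{x,y}\}$ of $x\triangledown y$. The elements $\nu^j,\overline{\nu}^j$ are produced the same way with $x$ and $y$ interchanged, using $c_{y,x}$ and the mirror form of \cref{lem:criticalgeometricdiskstwo}.

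Finally I would account for the reindexing, which is the only place the permutations $\sigma_x,\sigma_y$ enter. The composite $(x\triangleright y)\circ(\mu^i)_{i\in\ar(x)}$ has indexing ordinal $\bigsqcup_{i\in\ar(x)}\ar(\mu^i)$, ordered lexicographically by $\ar(x)$ and then by each $c_{x,y}(i)$, whereas $\ar(x\triangledown y)=L^1_{x,y}\cup L^2_{x,y}$ carries the order induced from $\ar(x)\sqcup\ar(y)$; these two orders on the same set differ in general, since the composite interleaves $L^1_{x,y}$- and $L^2_{x,y}$-indices according to $\ar(x)$. Taking $\sigma_x\in\Aut(\ar(x\triangledown y))$ to be the permutation relating them, the equivariance axioms upgrade the set-level identity of the previous step to $x\triangledown y=\sigma_x\cdot\big((x\triangleright y)\circ(\mu^i)_i\big)$, and likewise for $\sigma_y$. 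The only step with genuine content is the geometry invoked in the third paragraph — that the fivefold enlargement of $x_i$ engulfs the fivefold enlargements of all the $y_j$ it meets — which is \cref{lem:criticalgeometricdisks} with $\lambda=\mu=5$; I expect the subtle point to be the equal-radius case, handled precisely by the deliberately asymmetric definitions of $L^1_{x,y},L^2_{x,y},R^1_{x,y},R^2_{x,y}$, which sort a disk tied in radius with one it meets consistently onto a single side. Everything else is the bookkeeping above.
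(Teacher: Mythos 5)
Your argument is essentially the paper's: the paper proves this lemma by exactly the discussion preceding it, constructing $\mu^i_j=(x_i\circ 5\id)^{-1}\circ y_j$ for $i\in R^1_{x,y}$ via \cref{lem:criticalgeometricdiskstwo}, taking $\overline{\mu}^i$ to be $\id$ or $\tfrac15\id$, and symmetrically for $\nu^j,\overline{\nu}^j$, with the permutations absorbing the reindexing; your write-up adds the membership checks the paper leaves implicit.

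One small overstatement to repair: you claim that for \emph{every} $i\in R^1_{x,y}$ and every $j\in c_{x,y}(i)$ one has $y_j(5B_V)\subseteq x_i(5B_V)$, citing \cref{lem:criticalgeometricdiskstwo}. That lemma only gives this when $\abs{c_{x,y}(i)}>1$, and the inclusion is genuinely false in the singleton case (take $\rad(y_j)$ close to $\rad(x_i)$ with the two disks barely overlapping; the fivefold enlargements are not nested, and nothing in the hypotheses rules this out since the separation condition is vacuous in arity one). This does not break the construction: when $c_{x,y}(i)=\set{j}$ the element $\mu^i$ is unary, so membership in $\separateddiskoperad^\rho(B_V)$ only requires $y_j(B_V)\subseteq x_i(5B_V)$, which follows from the elementary estimate $d+\rad(y_j)<\rad(x_i)+2\rad(y_j)<3\rad(x_i)$ for the distance $d$ between centers, using $\rad(y_j)<\rad(x_i)$. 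With that case split made explicit (only the $\abs{c_{x,y}(i)}>1$ case appeals to \cref{lem:criticalgeometricdisks} with $\lambda=\mu=5$), your proof goes through and coincides with the paper's.
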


\begin{definition}{}{}
    The \emph{additive core} $\additivecore$ is the image of $\coregenerators(\separateddiskoperad^\rho(B_V),\separateddiskoperad^\psi(B_W))$ under the quotient map \[q: \supergenerators(\staroperad^\rho(B_V),\staroperad^\psi(B_W))\to \staroperad^\rho(B_V)\otimes \staroperad^\psi(B_W).
    \] This is a $G$-collection.
\end{definition}

A bit more concretely, elements of $\additivecore$ are those that have representations of the form \[\big(\sigma\cdot(a\otimes b)\big)\circ \big(c^{i,j}\otimes d^{i,j}\big)_{(i,j)\in\ar{a}\times \ar{b}}\] where \begin{enumerate}
    \item $a\in\separateddiskoperad^\rho(B_V)$, $b\in\separateddiskoperad^\psi(B_W)$ and $\sigma\in \Sigma_{\ar{a\otimes b}}$, 
    \item $c^{i,j}\in \staroperad^\psi(B_V)_{\leq 1}$ and $d^{i,j}\in\staroperad^\psi(B_W)_{\leq 1}$,  
    \item $\abs{\ar{c^{i,j}}}=0$ if and only if $\abs{\ar{d^{i,j}}}=0$, and 
    \item If $\abs{\ar{c^{i,j}}}=0$, then there exists a $\vj[']$ such that $\abs{\ar{d^{i,\vj[']}}}\neq 0$ and vice versa.
\end{enumerate}

\begin{theorem}\label{theorem: injective on core}
    The induced map $\phi:\staroperad^\rho(B_V)\otimes \staroperad^\psi(B_W)\to \staroperad^{\rho\times\psi}(B_V\times B_W)$ restricted to $\additivecore$ is an embedding.
\end{theorem}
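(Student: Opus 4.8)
The plan is to prove the statement in two stages: first that $\phi|_{\additivecore}$ is injective (as a map of $G$-collections), and then that this continuous injection is a topological embedding, the latter via the machinery of \Cref{appendix:topology lemmas}. The key structural inputs for injectivity are that the codomain $\staroperad^{\rho\times\psi}(B_V\times B_W)$ is algebraically axial — being a suboperad of $\ambientoperad^{\rho\times\psi}(B_V\times B_W)=\mathscr{O}(\mathcal{G}^{\rho\times\psi}(B_V\times B_W))$, and suboperads of algebraically axial operads are again algebraically axial — and that it is left-cancellable. Consequently an element of the codomain is determined by its unary components, and each unary component is an affine dilation of the product disk $B_V\times B_W$, hence splits \emph{uniquely} as a product of an affine dilation of $B_V$ with one of $B_W$. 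Together with \Cref{lem:unaryinjection} (whose hypothesis $i_V(1)\cap i_W(1)=\set{\id}$ holds by this unique splitting) this settles arity $\le 1$ and, more importantly, lets us argue componentwise in general.

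For higher arity, take $z,z'\in\additivecore$ with $\phi(z)=\phi(z')$ and fix core-generator representatives $\bigl(\sigma\cdot(a\otimes b)\bigr)\circ(c^{i,j}\otimes d^{i,j})$ and $\bigl(\sigma'\cdot(a'\otimes b')\bigr)\circ((c')^{i,j}\otimes (d')^{i,j})$, with $a,a'\in\separateddiskoperad^\rho(B_V)$, $b,b'\in\separateddiskoperad^\psi(B_W)$, and $c,d,c',d'$ of arity $\le 1$. Applying $\phi$ and splitting each component into its $V$- and $W$-factors, $\phi(z)=\phi(z')$ becomes two presentations of a single configuration of product disks inside $B_V\times B_W$. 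The heart of the proof is to show that the ``grid'' of bounding disks $\set{a_i(B_V)\times b_j(B_W)}$ underlying such a presentation — i.e.\ the superposition-tree/block structure — is \emph{canonically} encoded in the configuration. Because $a\in\separateddiskoperad^\rho(B_V)$, the enlarged columns $a_i(5B_V)$ are pairwise disjoint and still contained in $B_V$ (and symmetrically for the rows coming from $b$), so the $5$-fold separation built into $\separateddiskoperad$, combined with \Cref{lem:criticalgeometricdisks} and \Cref{lem:criticalgeometricdiskstwo}, precludes any coincidental alignment and forces the column/row partition of the index set to agree up to the relabelling recorded in $\sigma,\sigma'$. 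When $a$ and $a'$ are not literally equal, one reconciles them with \Cref{lem:bubbletransfer}: $a\triangleright a'$ is a common divisor of $a$ and $a\triangledown a'$ and $a\triangleleft a'$ is a common divisor of $a'$ and $a\triangledown a'$, yielding the promised chain of divisors linking $a$ and $a'$ in $\staroperad^\rho(B_V)$, and likewise for $b,b'$; pushing these chains through $\phi$ and repeatedly cancelling on the left (using left-cancellability of the codomain) collapses the chains, forcing $a=a'$ and $b=b'$.

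Once the $a\otimes b$-layer is matched, $\phi(z)=\phi(a\otimes b)\circ\phi(c^{i,j}\otimes d^{i,j})$, so left-cancellability of $\staroperad^{\rho\times\psi}(B_V\times B_W)$ pins down each $\phi(c^{i,j}\otimes d^{i,j})$. The nullary/unary pattern of the $(c^{i,j},d^{i,j})$ is read off from which grid cells survive in $\phi(z)$, and condition~(4) in the description of $\additivecore$ guarantees this pattern is internally consistent (no spurious empty row or column); on the unary cells, \Cref{lem:unaryinjection} together with injectivity of $i_V$ and $i_W$ recovers $c^{i,j}$ and $d^{i,j}$ themselves. Feeding all of this back into the core-tree presentation — which, being a colimit over the groupoid $\coretreegroupoid$, is insensitive precisely to the relabelling data we have tracked — gives $z=z'$, proving injectivity.

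Finally, $\phi$ is continuous as an operad map, and to upgrade the injection $\phi|_{\additivecore}$ to an embedding I would invoke the results of \Cref{appendix:topology lemmas}: the recovery recipe above (split each component into its $V$- and $W$-parts, read off the grid block structure via the geometric lemmas, then peel off the quotients by left-cancellation) is continuous on $\phi(\additivecore)$ and therefore exhibits a continuous inverse; equivalently one verifies directly that $\phi|_{\additivecore}$ is a closed, indeed proper, map onto its image. I expect the main obstacle to be the geometric core of the second and third paragraphs: proving rigorously that the superposition/grid structure of a core element is determined by its image, with careful bookkeeping of the nullary cells and of the interaction between the $5$-enlargements, the ``chain of divisors'' of \Cref{lem:bubbletransfer}, and left-cancellability in the codomain.
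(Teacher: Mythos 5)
Your reduction of ``embedding'' to ``injectivity'' is fine and matches the paper: the additive core is the image of a closed subcollection of the proper generators, so \cref{lemma on induced map are closed} upgrades an injection to an embedding (your alternative suggestion of a ``continuous recovery recipe'' is not needed and, as explained next, would not work). The genuine gap is in your injectivity argument. Its central claim --- that the grid of bounding disks $\set{a_i(B_V)\times b_j(B_W)}$ is canonically encoded in $\phi(z)$, and that pushing divisor chains through $\phi$ and cancelling ``forces $a=a'$ and $b=b'$'' --- is false, because core representations are not unique. If $a,a^{\prime}\in\separateddiskoperad^\rho(B_V)$ with $a_i=a^{\prime}_i\circ\mu_i$ for unary $\mu_i$ (e.g.\ $a^{\prime}$ a slight enlargement of $a$ that is still $5$-separated), then
\[\big(a\otimes b\big)\circ\big(c^{i,j}\otimes d^{i,j}\big)=\big(a^{\prime}\otimes b\big)\circ\big((\mu_i\circ c^{i,j})\otimes d^{i,j}\big)\]
are two legitimate core representations of the \emph{same} element of $\additivecore$ with different grids. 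So no argument can recover $a,b$ from the image, and nothing forces the two grids of $z$ and $z^{\prime}$ to coincide; the ``collapse the chains'' step has no correct target.

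What actually closes the argument in the paper is an induction on arity that you do not have. Given $\phi(x)=\phi(y)$ with representations over $(a\otimes b)$ and $(e\otimes f)$, one never proves $a=e$: instead \cref{lem:bubbletransfer} (via \cref{lem:criticalgeometricdiskstwo}) applied to the pair $a,e$ produces the common refinement $a\triangledown e$ together with unary data $\overline{\mu}^i,\mu^i$ (and $\overline{\nu}^s,\nu^s$); the algebraically axial property of $\staroperad^{\rho\times\psi}(B_V\times B_W)$ identifies $\phi(x)=\phi\big((a\triangledown e)\circ(u^k)_k\big)=\phi(y)$ for suitable lower-arity inputs $u^k$; and then left-cancellability reduces the problem to equalities $\phi(\overline{\mu}^i\circ q^i)=\phi(\mu^i\circ(r^s)_{s})$ between elements of $\additivecore_{<n}$, where the inductive hypothesis applies, giving $x=(a\triangledown e)\circ(u^k)_k=y$. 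Your proposal uses the same ingredients (\cref{lem:bubbletransfer}, left-cancellability, \cref{lem:unaryinjection}) but aims them at the impossible goal of canonical recovery rather than at this inductive reduction; it also omits the degenerate case where one tensor factor is unary, which the paper must handle by a separate interchange computation rather than any grid argument.
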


\begin{proof}
    Since $\coregenerators(\separateddiskoperad^\rho(B_V),\separateddiskoperad^\psi(B_W))$ is closed in $\propergenerators(\staroperad^\rho(B_V),\staroperad^\psi(B_W))$, \cref{lemma on induced map are closed} implies that if the induced map $\phi$ is injective on $\additivecore$, then it's an embedding.
    We prove that this map is injective via induction. From \cref{lem:unaryinjection}, we know that $\phi_1$ is injective.
    Suppose that $\phi$ is injective on $\additivecore_k$ for all $k<n$, and we have $x,y\in \additivecore_n$ such that $\phi(x)=\phi(y)$.

    A consequence of the definition of $\additivecore$ is that we have representations of $x,y$ in the form:

    \begin{align*}
        x & = (a\otimes b)\circ \left(c^{(i,j)}\otimes d^{(i,j)}\right)_{(i,j)\in \ar(a)\times\ar(b)} \\
        y & = (e\otimes f)\circ \left(g^{(s,t)}\otimes h^{(s,t)}\right)_{(s,t)\in \ar(e)\times\ar(f)}
    \end{align*}
    where for $x$ we have:
    \begin{enumerate}
        \item $\abs{\ar(a\otimes b)}>1
              $, and $\abs{\ar(c^{(i,j)})}=\abs{\ar(d^{(i,j)})}=0,$ or $1$ simultaneously for each pair $(i,j)$,
        \item for all $i$, there exists $j$ such that $\abs{\ar(c^{(i,j)})}=\abs{\ar(d^{(i,j)})}=1$ and similarly, for all $j$ there exists an $i$ with $\abs{\ar(c^{(i,j)})}=\abs{\ar(d^{(i,j)})}=1$.
    \end{enumerate}
    The representation for $y$ has the analogous properties.

    \emph{Case 1: $\abs{\ar(b)}=\abs{\ar(e)}=1$.}
    Observe that in this case that we have (after a chosen isomorphism) $\ar(a)\times\ar(b)=\ar(a)$, and so by the above conditions, none of the elements $c^i,d^i$ are nullary.
    Hence, as $c^i$ are unary, they interchange, and we have that \begin{align*}
        x & = (a\otimes b)\circ \left(c^{i}\otimes d^{i}\right)_{i\in \ar(a)}                     \\
          & = \biggl(a\circ (c^i)_{i\in\ar(a)}\biggr)\circ \biggl(b\circ d^i\biggr)_{i\in\ar(a)}.
    \end{align*}
    Similarly, we have that \begin{align*}
        y & = \biggl(f\circ (h^t)_{t\in\ar(f)}\biggr)\circ \biggl(e\circ g^t\biggr)_{t\in\ar(f)}.
    \end{align*}
    We can take that $\ar(a)=\ar(f)=\ar(x)=\ar(y)$, and as $\phi(x)=\phi(y)$, we have that the projections are equal.
    In particular, we deduce that if we set \begin{align*}
        A & := a\circ (c^i)_{i\in\ar(a)}  \\
        F & := f\circ (h^t)_{t\in\ar(f)},
    \end{align*}
    then we have that for each $i\in \ar(A)=\ar(F)$, that\begin{align*}
        A_i & = a_i\circ c^i = e\circ g^i \\
        F_i & = f_i\circ h^i = b\circ d^i
    \end{align*} in $\mathcal{G}^\rho$ and $\mathcal{H}^\psi$ respectively.

    Consequently, we then get that \begin{align*}
        x & = \biggl(a\circ (c^i)_{i\in\ar(a)}\biggr)\circ \biggl(b\circ d^i\biggr)_{i\in\ar(a)} \\
          & = A \circ (F_i)_{i\in\ar(A)}                                                         \\
          & = A \circ \biggl(F\circ (\delta^i_j)_{j\in \ar(F)}\biggr)_{i\in\ar(A)}               \\
          & = (A\otimes F)\circ (\delta^i_j)_{j\in \ar(F)}                                       \\
          & = F \circ \biggl(A\circ (\delta^i_j)_{j\in \ar(A)}\biggr)_{i\in\ar(F)}               \\ &= F \circ (A_i)_{i\in\ar(F)} \\ &= y.
    \end{align*}
    \emph{Case 2: $\abs{\ar(a)},\abs{\ar(e)}>1$.}

    In this case, since $a,e\in \separateddiskoperad^\rho(B_V)_{>1}$,  \cref{lem:bubbletransfer} tells us we have the following elements and equations:
    \begin{align}
        a                & = (a\triangleright e)\circ (\overline{\mu}^i)_{i\in\ar(a)} \label{lem:injective eq 1}\\
        e                & = (a\triangleleft e)\circ (\overline{\nu}^s)_{s\in\ar(e)}  \label{lem:injective eq 2}\\
        a\triangledown e & =  (a\triangleright e)\circ (\mu^i)_{i\in\ar(a)}           \label{lem:injective eq 3}\\
                         & = (a\triangleleft e)\circ (\nu^s)_{s\in\ar(e)}.  \label{lem:injective eq 4}
    \end{align}
    Writing \begin{align*}
        q^{i} & := b\circ (c^{(i,j)}\otimes d^{(i,j)})_{j\in \ar(b)} \shortintertext{and }
        r^s   & := f\circ (g^{(s,t)}\otimes h^{(s,t)})_{t\in \ar(b)},
    \end{align*} we set \[u^k := \begin{cases*}
            q^k & if $k\in L_{a,e}^1$ \\ r^k & if $k\in L_{a,e}^2$.
        \end{cases*}\]
    Since we have \begin{align*}
        \phi(x)&=\phi(y) \\ \phi(a\circ (q^i)_{i\in\ar(a)}) &= \phi(e\circ (r^s)_{s\in\ar(e)})
    \end{align*} by assumption, and from definitions we have for $k\in \vL[a,e,upper=1]$, and $\vk[']\in \vL[a,e,upper=2]$ that \begin{align*}
        \phi(a_k\circ q^k) &= \phi((a\triangledown e)_k\circ u^k) \\
        \phi(e_k\circ r^{\vk[']}) &= \phi((a\triangledown e)_{\vk[']}\circ u^{\vk[']}).
    \end{align*}
    Since $\staroperad^{\rho\times \psi}(B_V\times B_W)$ is algebraically axial, we deduce that \[\phi(a\circ (q^i)_{i\in\ar(a)}) = \phi((a\triangledown e)\circ(u^k)_{k\in L_{a,e}^1\cup L_{a,e}^2})=\phi(e\circ (r^s)_{s\in\ar(e)}).
    \] From the left equality, \cref{lem:injective eq 1,lem:injective eq 2,lem:injective eq 3,lem:injective eq 4}, and that $\staroperad^{\rho\times \psi}(B_V\times B_W)$ is left-cancellable, we get that for each $i\in R_{a,e}^1$ that \[\phi\left(\overline{\mu}^i\circ q^i\right) = \phi\left(\mu^i\circ (r^s)_{s\in c(i)}\right).\] Both of these inputs are elements of $\additivecore_{<n}
    $ and so by the inductive assumption we get that \[\overline{\mu}^i\circ q^i = \mu^i\circ (r^s)_{s\in c(i)}.
    \] Consequently, we deduce that \[x = (a\triangledown e)\circ(u^k)_{k\in L_{a,e}^1\cup L_{a,e}^2},\] and by symmetry we get $x=y$.
\end{proof}

\subsection{Weak Deformation Retracts onto the Additive Core}
We now switch focus and build the required weak deformation retracts.
In general, creating explicit homotopies on an operad tensor (even as collections) is difficult.
However, we can use a trick here.
All the homotopies that we have constructed come from composing in the operad and this works just as well in the tensor.
In particular, the following is a well-defined map of $G$-collections on $\staroperad^\rho(B_V)\otimes \staroperad^\psi(B_W)$:
\begin{align*}
    \widetilde{H}: \staroperad^\rho(B_V)\otimes \staroperad^\psi(B_W)\times [0,1) \to \staroperad^\rho(B_V)\otimes \staroperad^\psi(B_W) \\ \widetilde{H}(x,t) := x\circ \left(s_\rho(1-t)\otimes s_\psi(1-t)\right)_{i\in\ar(x)},
\end{align*}
where $s_\rho: [0,1)\to \vG[cal]\to\staroperad^\rho(B_V)$ is the section map from \cref{ass:topologicalgroups}.
This map is a lift of the following map
\begin{align*}
    H: \staroperad^{\rho\times \psi}(B_V\times B_W)\times [0,1) \to \staroperad^{\rho\times \psi}(B_V\times B_W) \\ H(x,t) := x\circ \left(s_{\rho\times \psi}(1-t)\right)_{i\in\ar(x)},
\end{align*}
and the following diagram commutes
$$
    \begin{tikzcd}
        \staroperad^\rho(B_V)\otimes \staroperad^\psi(B_W)\times [0,1) \arrow[r,"\widetilde{H}"]\arrow[d,"\phi\times \id"] & \staroperad^\rho(B_V)\otimes \staroperad^\psi(B_W)\arrow[d,"\phi"] \\ \staroperad^{\rho\times \psi}(B_V\times B_W)\times [0,1) \arrow[r,"H"]& \staroperad^{\rho\times \psi}(B_V\times B_W)
    \end{tikzcd}
$$
We would like to show that we can correct the maps $H$ and $\widetilde{H}$ to give weak deformation retracts onto $\additivecore$.
While it is fairly straightforward to show that we can do so for $H$, working with the tensor is difficult, and we will need to make an intermediate step. The issue is that while we can show that for every $x\in \staroperad^{\rho\times \psi}(B_V\times B_W)$ there is a large enough $t$ so that $H(x,t)\in \additivecore$. Since the induced map $\phi$ is not necessarily injective, it is non-trivial to show that for every element $x\in \staroperad^\rho(B_V)\otimes\staroperad^\psi(B_W)$, the deformation $\vH[~](x,t)$ eventually lies in $\additivecore$.

In the next section, we will prove the following lemma 
\begin{restatable}{lemma}{weakdeformationlemma}
    \label{lemma: weak deformation lemma}
    The equivariant map
        \begin{align*}
            \vH[~]: \staroperad^{\rho}(B_V)\otimes \staroperad^{\psi}(B_W)\times [0,1) \to \staroperad^{\rho}(B_V)\otimes \staroperad^{\psi}(B_W)\\ \vH[~](x,t) := x\circ \left(s_{\rho}(1-t)\id_V\otimes s_{\psi}(1-t)\id_W\right)_{i\in\ar(x)},
        \end{align*}
    is such that for each $x\in \staroperad^{\rho}(B_V)\otimes \staroperad^{\psi}(B_W)$, there exists an open neighborhood $V_x$ of $\phi(x)$, and $t_x\in (1-\frac{1}{50},1)$ such that for $U_x := \vphi[inv]V_x$, \[H(U_x,t_x)\subseteq \additivecore.\]
\end{restatable}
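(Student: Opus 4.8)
The plan is to correct the shrinking map $\widetilde{H}$ geometrically in the codomain and then lift, in the style of \Cref{sec:divisibility}. Fix $x$ in the tensor, put $N=\abs{\ar(x)}$, and regard $\phi(x)$ as a configuration of $N$ pairwise disjoint product-disks $D_1,\dots,D_N\subseteq B_V\times B_W$. The key feature of $\widetilde{H}(\cdot,t)$ — equivalently of $H(\cdot,t)$ under $\phi$ — is that it fixes every product-disk \emph{center} and multiplies every radius by $1-t$. So I would first pick $t_x\in(1-\tfrac1{50},1)$ with $1-t_x$ small enough that, in $H(\phi(x),t_x)$: (i) any two $D_k$ with distinct $V$-centers (resp.\ distinct $W$-centers) have disjoint $5$-enlarged $V$-shadows (resp.\ $W$-shadows); (ii) grouping indices by equality of $V$-center gives an equivalence relation $\sim_V$ (similarly $\sim_W$) whose classes consist of concentric $V$-shadows, the largest members of which, slightly enlarged, have pairwise disjoint $5$-enlargements contained in $B_V$ (resp.\ $B_W$); and (iii) every cell $(\text{a }\sim_V\text{-class},\text{ a }\sim_W\text{-class})$ contains at most one $D_k$ — this last point because two product-disks concentric in both $V$ and $W$ cannot be disjoint. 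The threshold $1/50$ is a safe universal constant making these estimates work since the ambient shapes are fixed balls, and $t_x$ is taken $G$-invariantly as an infimum over the $G$-finite configuration.

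Next I would assemble the candidate core element from these data: $a\in\separateddiskoperad^\rho(B_V)$ of arity $\#(\sim_V)$ with $a_I(B_V)$ a $G$-equivariantly chosen slight enlargement of the hull of class $I$ (its $5$-enlargements disjoint in $B_V$ by (ii)), then $b\in\separateddiskoperad^\psi(B_W)$ similarly, and for each cell $(I,J)$ the nullary $c^{I,J}\otimes d^{I,J}$ (both of arity $0$) if the cell is empty, otherwise the unique arity-$1$ simple tensor $c^{I,J}\otimes d^{I,J}$ — using the block-diagonal form of $\mathcal{G}^{\rho\times\psi}$ — with $a_I\circ c^{I,J}$ and $b_J\circ d^{I,J}$ equal to the $V$- and $W$-parts of $H(\phi(x),t_x)_k$. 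Then $z:=(a\otimes b)\circ(c^{I,J}\otimes d^{I,J})_{(I,J)}$ lies in $\additivecore$ — conditions (1)--(4) of the additive core hold, (4) coming from nonemptiness of the classes — and $\phi(z)=H(\phi(x),t_x)=\phi\bigl(\widetilde{H}(x,t_x)\bigr)$.

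The crux is to promote this to the assertion that $\widetilde{H}(x',t_x)$ itself lies in $\additivecore$ for \emph{every} $x'$ with $\phi(x')$ in a suitable open $V_x\ni\phi(x)$ — that is, to show that the well-separated geometry of the $\phi$-image forces the tensor element, and not merely its image, into the core. Since properties (i)--(iii) are open conditions on $\phi(x')$ and on $H(\phi(x'),t_x)$, and arity is locally constant (so every such $x'$ has arity $N$ and the same cell partition applies), continuity of $H(\cdot,t_x)$ gives such a $V_x$ on which they persist with uniform margin. It then remains to prove the lifting statement: if $w$ is a tensor element of arity $N$ whose image $\phi(w)$ satisfies (i)--(iii) with uniform margin, then $w\in\additivecore$. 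I would prove this by induction on $N$, imitating the proof of \Cref{theorem: injective on core}: the division characterization shows that the images $i_V(a)$ and $i_W(b)$ divide $\phi(w)$ in $\staroperad^{\rho\times\psi}(B_V\times B_W)$; left-cancellability of the codomain, algebraic axiality, and the bubble lemma \Cref{lem:bubbletransfer} then let one lift these divisions into the tensor (dividing $w$ by the simple tensors $a\otimes\id$ and $\id\otimes b$) and identify the quotients; by (iii) the quotients have arity at most $1$, hence are simple tensors $c^{I,J}\otimes d^{I,J}$ by \Cref{lem:unaryinjection}, yielding a core representative of $w$. Taking $U_x:=\phi^{-1}(V_x)$ completes the proof.

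I expect the lifting statement to be the real obstacle: it is precisely the point where the non-injectivity of $\phi$ bites, which is why this lemma is isolated in its own section. The geometric bookkeeping in (i)--(iii) is routine, but must be arranged carefully enough that the margins survive perturbation of $\phi(x)$ and that, after pulling back along the filtration $\mathcal{G}_i^\rho$ of \Cref{filtered group conventions}, the relevant maps stay proper.
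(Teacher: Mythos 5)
Your global architecture matches the paper's: shrink until intersecting projection disks are (essentially) concentric, observe that the image becomes a ``critical'' configuration, build a core element with the same image (this is the paper's \cref{lemma: exists core representation}), stabilize the geometry over an open neighborhood of $\phi(x)$, and reduce everything to a lifting statement saying that a tensor element whose image has this well-separated geometry must itself lie in $\additivecore$. The gap is in your proof of that lifting statement, which you correctly identify as the crux but then dispatch with an unsupported mechanism. You propose to ``lift these divisions into the tensor (dividing $w$ by the simple tensors $a\otimes\id$ and $\id\otimes b$)'' from the fact that $i_V(a)$ and $i_W(b)$ divide $\phi(w)$ in $\staroperad^{\rho\times\psi}(B_V\times B_W)$. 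No tool in the paper, and none of the ones you cite, produces a division of $w$ itself from a division of $\phi(w)$: left-cancellability and algebraic axiality live entirely in the codomain, \cref{lem:bubbletransfer} only manufactures elements of a single factor $\separateddiskoperad^\rho(B_V)$ from two elements that are \emph{already} divisors of tensor elements, and in the proof of \cref{theorem: injective on core} the divisions in the tensor are supplied by the assumed core representations of $x$ and $y$ --- exactly the datum you do not have for an arbitrary $w$. Since $\phi$ is not injective, divisibility simply does not transport backwards along it, and this is the precise point where your induction has nothing to induct on.

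The paper's corresponding lemma (the unnamed one preceding the restated statement, asserting $z=\tilde z$ for $z\in\mathscr{D}$ with $\phi(z)\in\mathscr{C}$) gets around this differently, and the difference is essential. It restricts to the subcollection $\mathscr{D}$ of elements pre-composed with $\tfrac{1}{50}\id_V\otimes\tfrac{1}{50}\id_W$ --- a hypothesis on $w$ itself, not on $\phi(w)$, which your lifting statement omits --- and uses that such a $z$ already carries a division \emph{inside the tensor}: from a proper-tree representation, $z=a'\circ(q^s)_s$ with $a'$ in one factor and the quotients $q^s$ again in $\mathscr{D}$. Only equalities of \emph{images} are then extracted in the codomain via left-cancellability, regrouped by \cref{lemma: common critical refinement} (where membership in $\mathscr{D}$ is what makes the $5$-enlarged refinement element land in $\separateddiskoperad^\rho(B_V)$ --- this is the actual role of your ``safe universal constant'' $1/50$), and the induction on arity plus injectivity on $\additivecore$ (\cref{theorem: injective on core}) identifies the quotients $q^s$ as genuine tensor elements of core form, after which $z$ is reassembled into a core representation. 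Your geometric bookkeeping (i)--(iii) and the neighborhood-stability step are fine and parallel the paper, but without replacing the ``lift the division along $\phi$'' step by an argument of this kind (or some other mechanism that works from the internal tree structure of $w$ and a $\mathscr{D}$-type hypothesis), the proof does not go through.
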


Assuming that this lemma holds, by using \cref{lem:weakdeformationcorrection}, there exists a $G$-invariant map $\lambda:\staroperad^{\rho\times\psi}(B_V\times B_W)\to [0,1)$ such that 
\begin{enumerate}
    \item for all $y\in \staroperad^{\rho\times \psi}(B_V\times B_W)$, we have $y\circ (\lambda(y))_{i\in\ar(y)}\in \additivecore$;
    \item for all $x\in \staroperad^{\rho}(B_V)\otimes \staroperad^\psi(B_W)$, we have $y\circ (\vlambda[~](x))_{i\in\ar(x)}\in \additivecore$ where $\vlambda[~]=\vlambda\circ\phi$.
\end{enumerate}
This then allows us to construct the weak deformation retracts by the following:
\begin{align*}
    \vD[~](x,t) &= x\circ \Big(\big(1-(1-\vlambda[~](x))t\big)(\id_V\otimes \id_W)\Big)_{i\in \ar{x}} \shortintertext{and} 
    \vD(x,t) &= x\circ \Big(\big(1-(1-\vlambda(x))t\big)(\id_{V\times W})\Big)_{i\in \ar{x}} 
\end{align*}
Thus proving \Cref{thm:basicadditivity}. Hence, all that is left is to prove \cref{lemma: weak deformation lemma}, which we do in the next section.

\subsection{Injectivity on Critical Elements}

In order to prove \cref{lemma: weak deformation lemma}, we will hone in on a special sub-symmetric sequence of the additive core $\additivecore$ that can be detected by the geometry of its image under $\phi$. 

An element $x\in \staroperad^{\rho\times \psi}(B_V\times B_W)$, has two projections $\operatorname{pr}_Vx\in \ambientoperad^\rho(B_V)$, and $\operatorname{pr}_Wx\in \ambientoperad^\psi(B_W)$ whose images can overlap.
As a result, the intersection relation gives two different partitions of $\ar(x)$:
\[\ar(x)=\coprod_{i\in I}P_i = \coprod_{j\in J}Q_j.
\] Here, $P_i$ is the intersection partition induced by the overlaps of $\operatorname{pr}_Vx$ and $Q_j$ induced by $\operatorname{pr}_Wx$.

\begin{definition}{}{}
    We will say $x\in \staroperad^{\rho\times\psi}(B_V\times B_W)$ is \emph{critical} if there exists $a\in \separateddiskoperad^\rho(B_V)(I)$ and $b\in \separateddiskoperad^\psi(B_W)(J)$ such that:
    \begin{enumerate}
        \item for all $i\in I$ we have that \[\emptyset\neq \bigcap_{k\in P_i}(\operatorname{pr}_Vx)_k(B_V) \subseteq \bigcup_{k\in P_i}(\operatorname{pr}_Vx)_k(B_V) \subseteq a_i(B_V),\] and
        \item for all $j\in J$ we have that \[\emptyset\neq \bigcap_{k\in Q_j}(\operatorname{pr}_Wx)_k(B_W) \subseteq \bigcup_{k\in Q_j}(\operatorname{pr}_Wx)_k(B_W) \subseteq b_j(B_W).\]
    \end{enumerate}
    We will denote the $G$-subcollection of $\staroperad^{\rho\times\psi}(B_V\times B_W)$ of all critical elements by $\mathscr{C}$, and also say that the pair of elements $(a,b)$ above \emph{separate} the critical element $x$.
\end{definition}

\begin{lemma}\label{lemma: exists core representation}
    For $w\in \vC[scr]$ which is separated by $(a,b)$, there exists unary or nullary pairs $(c^{(i,j)},d^{(i,j)})\in \left(\staroperad^\rho(B_V)\times \staroperad^\psi(B_W)\right)_{\leq 1}$ such that \begin{align*}
        w = \phi\left((a\otimes b)\circ (c^{(i,j)}\otimes d^{(i,j)})_{(i,j)\in\ar{a}\times \ar{b}}\right). 
    \end{align*} 
    Note that $(a\otimes b)\circ (c^{(i,j)}\otimes d^{(i,j)})_{(i,j)\in\ar{a}\times \ar{b}}\in\additivecore$.
\end{lemma}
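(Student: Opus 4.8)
The plan is to reconstruct, for a critical element $w$ separated by $(a,b)$, the genuine "core representative" sitting over it in the tensor, by reading off each coordinate of $w$ against the disks $a_i(B_V)$ and $b_j(B_W)$. First I would note that since $(a,b)$ separates $w$, every index $k\in\ar(w)$ lies in a unique block $P_i$ of the $\pr_V$-partition and a unique block $Q_j$ of the $\pr_W$-partition; this assigns to $k$ a pair $(i,j)\in\ar(a)\times\ar(b)$, giving a function $\ar(w)\to\ar(a)\times\ar(b)$, and hence a map $\beta$ of finite ordinals structuring a would-be division. For each $(i,j)$ I would then define $c^{(i,j)}\in\ambientoperad^\rho$ and $d^{(i,j)}\in\ambientoperad^\psi$: if some $k\in\ar(w)$ maps to $(i,j)$, set $c^{(i,j)}:=a_i^{-1}\circ(\pr_V w)_k$ and $d^{(i,j)}:=b_j^{-1}\circ(\pr_W w)_k$ (well-defined as elements of $\mathcal{G}^\rho$, $\mathcal{H}^\psi$ since $a_i,b_j$ are invertible in the ambient groups); if no such $k$ exists, set both to $\ast$. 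The containments $\bigcup_{k\in P_i}(\pr_V w)_k(B_V)\subseteq a_i(B_V)$ from condition (1) of criticality give $(a_i^{-1}\circ(\pr_Vw)_k)(B_V)\subseteq B_V$, so $c^{(i,j)}\in\staroperad^\rho(B_V)_{\le 1}$; symmetrically for $d^{(i,j)}$ using condition (2).

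Next I would check the index $k$ is well-defined independently of the choice within its pair-class, i.e. that if $k,k'$ both map to $(i,j)$ then $(\pr_Vw)_k=(\pr_Vw)_{k'}$ and $(\pr_Ww)_k=(\pr_Ww)_{k'}$. This is where criticality is really used: if $k,k'\in P_i$ and also $k,k'\in Q_j$, then $(\pr_Vw)_k(B_V)$ and $(\pr_Vw)_{k'}(B_V)$ intersect (both contain the nonempty intersection $\bigcap_{l\in P_i}(\pr_Vw)_l(B_V)$) and likewise $(\pr_Ww)_k(B_W)\cap(\pr_Ww)_{k'}(B_W)\ne\emptyset$; hence the product disks $w_k(B_V\times B_W)$ and $w_{k'}(B_V\times B_W)$ intersect, forcing $k=k'$ since $w\in\staroperad^{\rho\times\psi}(B_V\times B_W)$. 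So each pair-class has at most one element, $\beta$ is injective, and the assignment $k\mapsto(i,j)$ is unambiguous. With the $c^{(i,j)},d^{(i,j)}$ in hand, I would compute $\big((a\otimes b)\circ(c^{(i,j)}\otimes d^{(i,j)})_{(i,j)}\big)$ in the tensor: its image under $\phi$ lands in $\ambientoperad^{\rho\times\psi}$, and the $k$-th component of $\phi$ applied to it is, by the definition of $\phi$ (the maps $i_V,i_W$) and of the operad $\mathscr{O}(-)$ built from a monoid, exactly $(a_i\circ c^{(i,j)})\times(b_j\circ d^{(i,j)}) = (\pr_Vw)_k\times(\pr_Ww)_k = w_k$. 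Since $\staroperad^{\rho\times\psi}(B_V\times B_W)$ is algebraically axial (it is $\mathscr{O}$ of a monoid, so a suboperad of $\mathscr{O}(\mathcal{G}^{\rho\times\psi}(\cdot)(\Ord1))$, hence its elements are determined by components), matching all components gives $w=\phi\big((a\otimes b)\circ(c^{(i,j)}\otimes d^{(i,j)})_{(i,j)}\big)$.

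Finally I would verify the displayed element lies in $\additivecore$ by checking it has the shape listed after the definition of the additive core: $a\in\separateddiskoperad^\rho(B_V)$ and $b\in\separateddiskoperad^\psi(B_W)$ by hypothesis on the separating pair; each $c^{(i,j)},d^{(i,j)}$ is unary or nullary by construction; $\abs{\ar(c^{(i,j)})}=0$ iff no $k$ maps to $(i,j)$ iff $\abs{\ar(d^{(i,j)})}=0$; and the "vice versa" condition holds because the blocks $P_i$ (resp. $Q_j$) are nonempty, so for each $i$ some pair $(i,j)$ is hit and for each $j$ some pair $(i,j)$ is hit. I expect the main obstacle to be the bookkeeping in the component computation — carefully tracking which index of $(a\otimes b)\circ(\cdots)$ corresponds to $k\in\ar(w)$ through the grafting map $\beta$ and the identification $\ar(a)\times\ar(b)$ with an ordinal, and confirming the $\mathscr{O}$-monoid composition formula really does distribute $a_i$ over $c^{(i,j)}$ in the claimed way; the geometric inputs (nonempty intersection forces well-definedness, containment forces $\staroperad$-membership) are the straightforward part.
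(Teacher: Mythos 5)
Your proposal is correct and follows essentially the same route as the paper: the paper divides $w$ by $\phi(a\otimes b)$ via the geometric divisibility criterion and then uses the two nonempty-intersection conditions of criticality together with disjointness of the components of $w$ to force each quotient $r^{(i,j)}$ to be unary or nullary, which is exactly your argument that at most one $k\in\ar(w)$ can map to a given pair $(i,j)$. Your explicit construction $c^{(i,j)}=a_i^{-1}\circ(\pr_V w)_k$, $d^{(i,j)}=b_j^{-1}\circ(\pr_W w)_k$ together with the component comparison via algebraic axiality is just a hands-on rederivation of that divisibility step (with the ``for all $g\in G$'' containment following from $G$-invariance of $B_V$ and $B_W$), so the two proofs are essentially identical.
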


\begin{proof}
    Since the images of the disks of $\phi(a\otimes b)$ cover those of $w$, there exists $r^{(i,j)}\in \staroperad^{\rho\times\psi}(B_V\times B_W)$ such that \begin{align*}
        w = \phi(a\otimes b)\otimes (r^{(i,j)})_{(i,j)\in\ar{a}\times\ar{b}}.
    \end{align*}
    We claim that $\abs{\ar{r^{(i,j)}}}=0,$ or $1$. Suppose for some $(\vi['],\vj['])\in \ar{a}\times\ar{b}$ that $\abs{\ar{r^{(\vi['],\vj['])}}}>1$. From the definition of $\vC[scr]$, we have that \begin{align*}
        \emptyset \neq \bigcap_{k\in \ar{r^{(\vi['],\vj['])}}} \pr_Vw_k(B_V) \shortintertext{and, } 
        \emptyset \neq \bigcap_{k\in \ar{r^{(\vi['],\vj['])}}} \pr_Ww_k(B_W). 
    \end{align*} This would imply that $w\notin \staroperad^{\rho\times\psi}(B_V\times B_W)$ which isn't the case. Hence, we know that $\abs{\ar{r^{(i,j)}}}=0,$ or $1$. Such elements we can write as \begin{align*}
        r^{(i,j)} = c^{(i,j)}\times d^{[i,j]}
    \end{align*} where $c^{(i,j)}\in S^\rho(B_V)$, $d^{(i,j)}\in S^\psi(B_W)$ and are either unary or nullary simultaneously. We can write \begin{align*}
        w &= \phi(a\otimes b)\circ (r^{(i,j)})_{(i,j)\in\ar{a}\times\ar{b}} \\ 
        &= \phi(a\otimes b)\circ (c^{(i,j)}\times d^{(i,j)})_{(i,j)\in\ar{a}\times\ar{b}} \\
        &= \phi\left((a\otimes b)\circ (c^{(i,j)}\otimes d^{(i,j)})_{(i,j)\in\ar{a}\times\ar{b}}\right).
    \end{align*}
    We are then done.
\end{proof}

\begin{definition}
    Given an element $z\in\vC[scr]$, \cref{lemma: exists core representation,theorem: injective on core} implies that there is a unique $\vz[~]\in \additivecore$ such that $\phi(\vz[~])=z$. We will call $\vz[~]$ the \emph{core representative} of $z$.
\end{definition}

\begin{definition}
    Define a $G$-subcollection of $\staroperad^\rho(B_V)\otimes \staroperad^\psi(B_W)$ by the following:
    \begin{align*}
        \vD[scr] := \set[\bigg]{x\circ \left(\frac{1}{50}\id_V\otimes \frac{1}{50}\id_W\right)\given x\in \staroperad^\rho(B_V)\otimes \staroperad^\psi(B_W)}.
    \end{align*}
    We will abuse notation slightly and for the corresponding $G$-subcollection of $\staroperad^\rho(B_V)$ we will write this as $\staroperad^\rho(B_V)\cap \vD[scr]$.
\end{definition}

Given a collection of elements $f^i\in \staroperad^\rho(B_V)$ for some $i\in \vI[cal]$.
We will say that this collection has \emph{transitive intersections} if for any $\vi[1],\vi[2],\vi[3]\in \vI[cal]$, and $\vk[1]\in\ar{f^{\vi[1]}}$,$\vk[2]\in\ar{f^{\vi[2]}}$,$\vk[3]\in\ar{f^{\vi[3]}}$ such that \[f^{\vi[1]}_{\vk[1]}(B_V)\cap f^{\vi[2]}_{\vk[2]}(B_V)\neq \emptyset, \text{ and } f^{\vi[2]}_{\vk[2]}(B_V)\cap f^{\vi[3]}_{\vk[3]}(B_V)\neq \emptyset \] then we have \[f^{\vi[1]}_{\vk[1]}(B_V)\cap f^{\vi[3]}_{\vk[3]}(B_V)\neq \emptyset.
\]

\begin{lemma}\label{lemma: common critical refinement}
    Let $\vI[cal]$ be an indexing set, and for each $i\in\vI[cal]$, suppose we have elements $e^i\in \staroperad^\rho(B_V)\cap \vD[scr]$ such that the collection $\set{e^i}_{i\in\vI[cal]}$ has transitive intersections.
    Then there exists $e\in\separateddiskoperad^\rho(B_V)$, $\vsigma[i]\in\Aut{\ar{e}}$, and $e^{i,k}\in \staroperad^\rho(B_V)_{\leq 1}$ such that \[e^i = (\sigma_i\cdot e)\circ (e^{i,k})_{k\in\ar{e}}.
    \]
\end{lemma}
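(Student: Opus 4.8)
The plan is to manufacture $e$ by bundling together the disks of all the $e^i$ and enclosing each bundle in a single ball. Write each $e^i=x^i\circ(\tfrac1{50}\id_V)_k$ with $x^i\in\staroperad^\rho(B_V)$, so that $e^i_k(B_V)=x^i_k(\tfrac1{50}B_V)$ is concentric with the ball $x^i_k(B_V)\subseteq B_V$ and has $\tfrac1{50}$ of its radius; in particular the balls $x^i_k(B_V)$ are pairwise disjoint for fixed $i$. On the set of all disks $\{e^i_k(B_V)\}$ consider the relation ``meets''. The transitive-intersections hypothesis says precisely that this relation is transitive, hence an equivalence relation whose classes $c$ are \emph{cliques}: any two disks of a class meet. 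Since two disks of a fixed $e^i$ are disjoint, each class contains at most one disk of each $e^i$; so, writing $E$ for the set of classes, sending a disk to its class gives for every $i$ an injection $\ar{e^i}\hookrightarrow E$, and the plan is to build $e$ with $\ar{e}=E$.

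For a class $c$ put $\mathcal U_c=\bigcup_{D\in c}D$, let $D^\ast_c=e^{i_c}_{k_c}(B_V)$ be a disk of $c$ of maximal radius $r_c$ and centre $z_c$, and let $M_c=B(m_c,R_c)$ be the smallest ball containing $\mathcal U_c$. Since $c$ is a clique, every disk of $c$ lies in $B(z_c,3r_c)$, whence $r_c\le R_c\le 3r_c$ and $|m_c-z_c|\le 2r_c$. Let $e_c$ be the spherical dilation with $e_c(B_V)=M_c$ (it lies in $\mathcal G^\rho$ because $\rho$ is spherical), and set $e:=(e_c)_{c\in E}$. For each $i$, the disk $e^i_k(B_V)$ is contained in $M_c$ with $c$ its class, and the classes occurring are distinct as $k$ ranges over $\ar{e^i}$; taking $\sigma_i\in\Aut{E}$ to realise this injection and $e^{i,c}:=e_c^{-1}e^i_k$ (with $e^{i,c}=\ast$ if $c$ carries no disk of $e^i$), one gets $e^i=(\sigma_i\cdot e)\circ(e^{i,c})_{c\in\ar{e}}$ with each $e^{i,c}\in\staroperad^\rho(B_V)_{\le 1}$. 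Well-definedness of these quotients is immediate since $\ambientoperad^\rho$ is built from the group $\mathcal G^\rho$, and they lie in $\mathcal G^\rho(B_V)$ because applying any $g\in G$ (an orthogonal map) preserves the inclusion $e^i_k(B_V)\subseteq M_c\subseteq B_V$.

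It remains to verify $e\in\separateddiskoperad^\rho(B_V)$, i.e.\ that when $|E|>1$ the balls $5M_c=e_c(5B_V)$ are pairwise disjoint and lie in $B_V$, equivariantly (when $|E|\le 1$ the defining condition of $\separateddiskoperad^\rho(B_V)$ is vacuous). Containment is the easy half and is where the constant $50$ earns its keep: from $R_c\le 3r_c$ and $|m_c-z_c|\le 2r_c$ one gets $5M_c\subseteq B(z_c,17r_c)\subseteq B(z_c,50r_c)=x^{i_c}_{k_c}(B_V)\subseteq B_V$, and this survives any $g\in G$ since orthogonal maps are isometries. The disjointness of distinct $5M_c$ is the heart of the proof, and the step I expect to be the main obstacle.

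For disjointness I would argue by contradiction. If $5M_c$ meets $5M_{c'}$ then $z_c,z_{c'}$ are within $17(r_c+r_{c'})$, so $D^\ast_{c'}$ lies inside $x^{i_c}_{k_c}(B_V)$; since the disks of a single $e^i$ are even $50$-fold disjoint inside $B_V$, one first deduces $D^\ast_c$ and $D^\ast_{c'}$ cannot come from the same $e^i$, and then uses the transitive-intersections hypothesis to trace a chain of mutually meeting disks that would force two disks of a single $e^i$ to have overlapping $50$-fold enlargements, contradicting $x^i\in\staroperad^\rho(B_V)$. If a pair of classes still resists separation, one merges them into a single disk of $e$; using once more the $50$-fold separation inside each $e^i$, one checks a merge never collects two disks of the same $e^i$, so the injections $\ar{e^i}\hookrightarrow E$ survive, and if the merging collapses $E$ to one class the $\separateddiskoperad^\rho(B_V)$ condition is again vacuous. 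Carrying out this separation/merging estimate precisely, with the constants $5$ and $50$ doing exactly their intended work, is the crux of the argument.
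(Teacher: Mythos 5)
Your construction follows the same skeleton as the paper's proof: transitivity of intersections makes ``meets'' an equivalence relation on the disks, each class meets each $e^i$ in at most one disk (so the per-element maps $\ar(e^i)\hookrightarrow E$ are injective), and one takes one ball of $e$ per class. The paper takes for $e_c$ the largest disk of the class scaled by $5$, you take the smallest enclosing ball; that difference is inessential, and your containment estimate $5M_c\subseteq B(z_c,17r_c)\subseteq B_V$ is fine. The genuine gap is exactly the step you flag but do not carry out: the pairwise disjointness of the balls $5M_c$, i.e.\ the verification that $e\in\separateddiskoperad^\rho(B_V)$. The contradiction you sketch cannot be completed from the stated hypotheses, because membership in $\vD[scr]$ only separates disks belonging to one and the same $e^i$. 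Two large disks belonging to \emph{different} elements can lie in different classes while nearly touching (non-intersection of classes carries no quantitative separation), and then the $5$-fold enlargements of the two class-enclosing balls overlap without any pair of disks of a single $e^i$ having overlapping $50$-fold enlargements -- there is no chain of mutually meeting disks to trace, since by construction the two classes have no intersecting members at all.

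The merging fallback does not repair this. In the configuration just described, attach to each of the two nearly-touching large disks (coming from different elements $e^j$, $e^{j'}$) a small disk of one and the same $e^i$; merging the two classes then puts two disks of $e^i$ inside a single ball of $e$, so $e^i$ can no longer be written as $(\sigma_i\cdot e)\circ(e^{i,k})_{k\in\ar(e)}$ with all $e^{i,k}$ unary or nullary. Your assertion that ``a merge never collects two disks of the same $e^i$'' is precisely what would need proof, and it does not follow from the $50$-fold separation: the size and position of a merged ball are governed by the large disks of other elements, not by the radii of $e^i$'s own disks, so the $50$-fold separation inside $e^i$ is on the wrong scale to prevent this. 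Such configurations satisfy every hypothesis you (and the statement) record -- each element in $\staroperad^\rho(B_V)\cap\vD[scr]$, intersections transitive -- which shows that no argument from these hypotheses alone can close the disjointness step; any complete proof has to exploit more about how the $e^i$ arise in the application. (The paper's own proof, which uses the maximal disk of each class composed with $5\id_V$, likewise dispatches this point in one sentence, asserting that $e^i\in\vD[scr]$ ensures $e\in\separateddiskoperad^\rho(B_V)$.) So as written your proposal establishes the easy containment half, but the separatedness of $e$, which is the actual content of the lemma, is left unproven.
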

\begin{proof}
    Looking at the collection of all the disks $\set{e^i_k}_{i\in\vI[cal],k\in\ar{e^i}}$, that the intersections are transitive implies we get an equivalence relation on this set via intersection.
    Denote these equivalence classes by $E_l$ for some $l\in\vL[cal]$, and we will put some linear order on $\vL[cal]$.
    Since each $e^i\in \staroperad^\rho(B_V)$, we have that for each $i\in\vI[cal]$ and $l\in \vL[cal]$ that $\set{e^i_k}_{k\in\ar{e^i}}\cap E_l$ is either empty or a singleton.
    For each $E_l$, pick a disk from it that has the largest diameter, say $e^l$.
    We then define $e\in \separateddiskoperad^\rho(B_V)(\vL[cal])$ by \[e_l:= e^l\circ 5\id_V.
    \] Note that each $e^i\in \vD[scr]$ ensures that $e\in\separateddiskoperad^\rho(B_V)$.
    Now, every element of $\set{e^i_k}_{i\in\vI[cal],k\in\ar{e^i}}$ is contained in exactly one $e_l$.
    The lemma then follows. 
\end{proof}

\begin{lemma}
    For any $z\in \vD[scr]$ such that $\phi(z)\in \vC[scr]$, let $\vz[~]$ be the core representation of $\phi(z)$.
    Then $z=\vz[~]$.
\end{lemma}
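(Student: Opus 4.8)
The plan is to reduce the lemma to a single divisibility statement inside the tensor, and then to finish using left-cancellability. Unwinding the hypothesis, write $z=x\circ\left(\tfrac{1}{50}\id_V\otimes\tfrac{1}{50}\id_W\right)_{i\in\ar(x)}$ for some $x\in\staroperad^\rho(B_V)\otimes\staroperad^\psi(B_W)$, and set $w:=\phi(z)$. Since $\phi\left(\tfrac{1}{50}\id_V\otimes\tfrac{1}{50}\id_W\right)=\tfrac{1}{50}\id_{V\oplus W}$, the disks of $w$ are the $\tfrac{1}{50}$-dilates of the disks of $\phi(x)$; in particular each $(\pr_V w)_k$ and $(\pr_W w)_k$ is a $\tfrac{1}{50}$-shrink of an element that already maps $B_V$ (resp.\ $B_W$) into itself. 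Fix a pair $(a,b)$ separating $w$, with $a\in\separateddiskoperad^\rho(B_V)(I)$, $b\in\separateddiskoperad^\psi(B_W)(J)$, and write $\ar(w)=\coprod_{i\in I}P_i=\coprod_{j\in J}Q_j$ for the two intersection partitions of $\ar(w)$.

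I would first record two elementary facts. First, for each pair $(i,j)$ the set $P_i\cap Q_j$ has at most one element: criticality gives $\bigcap_{k\in P_i}(\pr_V w)_k(B_V)\neq\emptyset$ and $\bigcap_{k\in Q_j}(\pr_W w)_k(B_W)\neq\emptyset$, so two distinct $k,k'\in P_i\cap Q_j$ would force $w_k(B_V\times B_W)\cap w_{k'}(B_V\times B_W)$ to be a product of two nonempty sets, contradicting $w\in\staroperad^{\rho\times\psi}(B_V\times B_W)$; hence $k\mapsto(i(k),j(k))$ embeds $\ar(w)$ into $I\times J$. Second, the disks of $\phi(a\otimes b)$ are the products $a_i(B_V)\times b_j(B_W)$, which are pairwise disjoint, so $\phi(a\otimes b)\in\staroperad^{\rho\times\psi}(B_V\times B_W)$; and because $w_k(B_V\times B_W)\subseteq a_{i(k)}(B_V)\times b_{j(k)}(B_W)$, the geometric characterization of divisibility shows $\phi(a\otimes b)$ divides $w$ there, with quotients $r^{(i,j)}$ that are at most unary and of the form $c^{(i,j)}\times d^{(i,j)}$ --- this is exactly what the proof of \cref{lemma: exists core representation} produces, and there $(a\otimes b)\circ(c^{(i,j)}\otimes d^{(i,j)})_{(i,j)\in I\times J}=\vz[~]$.

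The heart of the proof is then the claim that $a\otimes b$ divides $z$ in $\staroperad^\rho(B_V)\otimes\staroperad^\psi(B_W)$. Granting this, write $z=(a\otimes b)\circ(\widehat r^{(i,j)})_{(i,j)}$. Applying $\phi$ and comparing with the division of $w$ from the previous paragraph --- both are structured by the same map, since the grid cells are disjoint and each $w_k$ lies in exactly one of them --- left-cancellability of $\staroperad^{\rho\times\psi}(B_V\times B_W)$ gives $\phi(\widehat r^{(i,j)})=r^{(i,j)}$ for every $(i,j)$. As $r^{(i,j)}$ is at most unary and $\phi$ preserves arity, each $\widehat r^{(i,j)}$ is at most unary, so by the identification $(\staroperad^\rho(B_V)\otimes\staroperad^\psi(B_W))(1)\cong\staroperad^\rho(B_V)(1)\times\staroperad^\psi(B_W)(1)$ of \cref{lem:unaryinjection} together with injectivity of $\phi$ on unary components we conclude $\widehat r^{(i,j)}=c^{(i,j)}\otimes d^{(i,j)}$. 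Hence $z=(a\otimes b)\circ(c^{(i,j)}\otimes d^{(i,j)})_{(i,j)}=\vz[~]$; this also exhibits $z\in\additivecore$, so one could equivalently finish by invoking injectivity of $\phi$ on the additive core (\cref{theorem: injective on core}).

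To prove the divisibility claim I would induct on the height of a proper tree-in-superposition representation $z=q(T)$, chosen so that the bottom-level $\tfrac{1}{50}$-shrinks and the leading vertex are inherited from a representation of $x$; this keeps every subtree element inside $\vD[scr]$. The height-$\le 1$ case is immediate: then $z=p\otimes r$ with $p$ and $r$ themselves $\tfrac{1}{50}$-shrinks, hence separated-disk elements (a $\tfrac{1}{50}$-disk dilated five-fold is a $\tfrac{1}{10}$-disk of its original, still disjoint from the others and inside $B_V$), so $z\in\additivecore$ and $a\otimes b$ may be taken equal to $p\otimes r$. For the inductive step write $z=(p\otimes r)\circ(z^{(\alpha,\beta)})_{(\alpha,\beta)}$; because $p,r$ have pairwise-disjoint disks, each $P_i$ lies in a single ``column'' and each $Q_j$ in a single ``row'' of the $(p,r)$-grid, so the criticality data of $w$ descends to criticality data of every $\phi(z^{(\alpha,\beta)})$ --- the separators being manufactured from the $\tfrac{1}{50}$-shrunk families $\{(\pr_V w)_k\}$, $\{(\pr_W w)_k\}$ by \cref{lemma: common critical refinement}, whose transitivity hypothesis holds precisely because each block has a common point. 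By induction the separator of each $\phi(z^{(\alpha,\beta)})$ divides $z^{(\alpha,\beta)}$; feeding these divisions back up and applying \cref{lemma: common critical refinement} once more to coalesce $p$ (resp.\ $r$) with the column (resp.\ row) separators into a single separated-disk element --- this is where the slack $50\ge 5\cdot 10$ is needed, so that five-fold dilates of everything remain inside $B_V$ --- yields a division of $z$ by a separated-disk element coarse enough that, after the unique reindexing supplied by the grid injection $\ar(w)\hookrightarrow I\times J$, it can be identified with $a\otimes b$. I expect this final coalescing step to be the main obstacle: keeping the three grids in play consistent --- the $(p,r)$-grid of the chosen tree, the criticality grid $(a,b)$ of $w$, and the refined grids built inductively for the subtrees --- and verifying that the height-$\le 2$ tree one eventually writes down genuinely represents the same element of the tensor. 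This is the tensor-level incarnation of the geometric ``common divisor'' step that is automatic for cubes but must here be powered by the separation built into $\vD[scr]$.
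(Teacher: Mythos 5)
Your reduction of the lemma to the claim that $a\otimes b$ divides $z$ inside $\staroperad^\rho(B_V)\otimes\staroperad^\psi(B_W)$ is sound, and your endgame (compare the two divisions of $\phi(z)$ by $\phi(a\otimes b)$, use left-cancellability of $\staroperad^{\rho\times\psi}(B_V\times B_W)$ and injectivity of $\phi$ on unary parts, or finish via \cref{theorem: injective on core}) would indeed close the argument. The gap is that this divisibility claim is essentially equivalent to the lemma itself, and your proposed proof of it --- induction on the height of a tree-in-superposition representation of $z$ --- is only sketched exactly where the content lies. In the inductive step you must (i) exhibit each $\phi(z^{(\alpha,\beta)})$ as critical, which requires manufacturing separators for the pieces (note the naive transport of $a$ by $p_\alpha^{-1}$ can fail, since $p_\alpha^{-1}\circ a_i$ need not map $B_V$ into itself, so this already needs an argument along the lines of \cref{lemma: common critical refinement}); and, more seriously, (ii) carry out the ``coalescing'': merge $p$ with the column separators and $r$ with the row separators and show that the resulting height-$\le 2$ expression still represents $z$ in the tensor. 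Step (ii), which you yourself flag as the main obstacle, is a simultaneous two-sided rewriting that the interchange relations do not obviously permit --- it is exactly the phenomenon responsible for $\phi$ failing to be injective --- and nothing in your proposal supplies the required tensor-level identities.

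The paper's proof avoids this by inducting on arity rather than height and by merging on one side only. It strips off a single leading factor $a'\in\staroperad^\rho(B_V)_{>1}$ from a representation of $z$, uses left-cancellability in the target to identify $\phi(q^s)$ for each lower-arity piece $q^s$, applies \cref{lemma: common critical refinement} only to the $V$-side elements sitting under the \emph{fixed} $W$-side element $b$ (producing $e^s$ with $e^{s,j}=(\sigma_j\cdot e^s)\circ(e^{s,j,k})_k$), and then invokes the induction hypothesis --- the lemma itself at lower arity --- to upgrade the image equality $\phi(q^s)=\phi\bigl((\sigma\cdot(b\otimes e^s))\circ(\cdots)\bigr)$ to an equality in the tensor. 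Reassembling, with $b$ unchanged throughout, uses only explicit interchange and equivariance identities and exhibits $z$ as an element of $\additivecore$, after which \cref{theorem: injective on core} gives $z=\tilde{z}$. If you want to salvage your outline, replace the height induction and the two-sided coalescing with this one-sided arity induction; as written, the divisibility claim remains unproved.
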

\begin{proof}
    We prove this via induction on the arity of $z$.
    Note that we know this is true on unary elements as $\phi$ is injective on this component via \cref{lem:unaryinjection}.
    Suppose $(a,b)$ separates $\phi(z)$.
    Then from \cref{lemma: exists core representation} we have that the core representative of $\phi(z)$ is given by \begin{align}
        \vz[~] := (a\otimes b)\circ (c^{(i,j)}\otimes d^{(i,j)})_{(i,j)\in\ar{a}\times \ar{b}} \label{crit lemma eq 1}
    \end{align} for some $c^{(i,j)}\in \staroperad^\rho(B_V)_{\leq 1}
    $, and $d^{(i,j)}\in \staroperad^\psi(B_W)_{\leq 1}$.

    Since $z\in\vD[scr]$, with-out loss of generality, there exists $\va[']\in \staroperad^\psi(B_V)_{>1}$ and $q^i\in \vD[scr]$ such that \begin{align}
        z = \va[']\circ (q^s)_{s\in\va[']}. \label{crit lemma eq 2}
    \end{align}
    As $\phi(\va[']\otimes b)$ covers $\phi(z)$, we deduce that there exists $e^{s,j}\in \staroperad^\rho(B_V)\cap\vD[scr]$ and $f^{\vs,\vj,\vk}\in\staroperad^\psi(B_W)\cap \vD[scr]$ such that \begin{align}
        \phi(z) = \phi\left((\va[']\otimes \vb)\circ \left(e^{s,j}\circ (f^{s,j,k})_{k\in\ar{e^{s,j}}}\right)_{(s,j)\in \ar{\va[']}\times \ar{\vb}}\right). \label{crit lemma eq 3}
    \end{align}
    Using \cref{crit lemma eq 2,crit lemma eq 3} and left cancellability, we deduce that for each $s\in\ar{\va[']}$ that \begin{align}
        \phi(q^s) = \phi\left(\vb\circ \left(e^{s,j}\circ (f^{s,j,k})_{k\in\ar{e^{s,j}}}\right)_{j\in \ar{\vb}}\right). \label{crit lemma eq 4}
    \end{align}
    From \cref{lemma: common critical refinement}, there exists $e^{s},e^{s,j,k}$ and $\sigma_j$ such that \[e^{s,j}= (\sigma_j\cdot e^s)\circ (e^{s,j,k})_{k\in \ar{e^s}}.\] 
    Denote the injective map on the arity sets by $\valpha[j]:\ar{e^{s,j}}\hookrightarrow \ar{e^s}$.
    We then calculate that
    \begin{align}
        \vb\circ \left(e^{s,j}\right)_{j\in \ar{\vb}} & = \vb\circ \left((\sigma_j\cdot e^s)\circ (e^{s,j,k})_{k\in \ar{e^s}}\right)_{j\in \ar{\vb}} \nonumber                \\
        & = \left(\vb\circ (\sigma_j\cdot e^s)_{j\in\ar{b}}\right)\circ (e^{s,j,k})_{(j,k)\in \ar{\vb}\times\ar{e^s}}\nonumber  \\
        \vb\circ \left(e^{s,j}\right)_{j\in \ar{\vb}} & =\left(\sigma\cdot (\vb\otimes e^s)\right)\circ (e^{s,j,k})_{(j,k)\in \ar{\vb}\times\ar{e^s}} \label{crit lemma eq 5}
    \end{align}
    where $\sigma = \oplus \sigma_j$.
    This allows us to extend \cref{crit lemma eq 4} to
    \begin{align}
        \phi(q^s) & = \phi\left(\vb\circ \left(e^{s,j}\circ (f^{s,j,k})_{k\in\ar{e^{s,j}}}\right)_{j\in \ar{\vb}}\right) \nonumber                                                                                       \\
                  & = \phi\left(\left(\vb\circ (e^{s,j})_{j\in\ar{b}}\right)\circ\left(f^{s,j,k}\right)_{(j,k)\in\ar{b}\times\ar{e^{s,j}}}\right) \nonumber                                                              \\
                  & = \phi\left(\left(\left(\sigma\cdot (\vb\otimes e^s)\right)\circ (e^{s,j,k})_{(j,k)\in \ar{\vb}\times\ar{e^s}}\right)\circ\left(f^{s,j,k}\right)_{(j,k)\in\ar{b}\times\ar{e^{s,j}}}\right) \nonumber \\
        \phi(q^s) & = \phi\left(\left(\sigma\cdot (\vb\otimes e^s)\right)\circ \left(e^{s,j,k}\otimes\vf[~]^{s,j,k}\right)_{(j,k)\in\ar{b}\times\ar{e^{s}}}\right) \label{crit lemma eq 6}
    \end{align}
    where $\vf[~]^{s,j,\vk[']} = \begin{cases*}
            \vf^{s,j,k} & if $\vk[']=\valpha[j](\vk)$ for some (unique) $\vk$\\ \ast & otherwise.
        \end{cases*}$

    Note that the element in the right side of \cref{crit lemma eq 6} is in $\additivecore$, so by the induction hypothesis we get that
    \begin{align}
        q^s & = \left(\sigma\cdot (\vb\otimes e^s)\right)\circ \left(e^{s,j,k}\otimes\vf[~]^{s,j,k}\right)_{(j,k)\in\ar{b}\times\ar{e^{s}}} \nonumber 
    \end{align}
    and so we conclude that 
    \begin{align}
        z   & = \va['] \circ \left(\left(\sigma\cdot (\vb\otimes e^s)\right)\circ \left(e^{s,j,k}\otimes\vf[~]^{s,j,k}\right)_{(j,k)\in\ar{b}\times\ar{e^{s}}} \right)_{s\in\ar{\va[']}} \nonumber                                                             \\
            & = \left(\va['] \circ \left(\sigma\cdot (\vb\otimes e^s)\right)_{s\in\ar{\va[']}}\right)\circ \left(e^{s,j,k}\otimes\vf[~]^{s,j,k}\right)_{(s,j,k)\in\ar{\va[']}\times\ar{b}\times\ar{e^{s}}} \nonumber                                           \\
        z   & = \left(\va['] \circ \left(\sigma\tau\cdot \left(e^s\circ (b)_{k\in\ar{e^s}}\right)\right)_{s\in\ar{\va[']}}\right)\circ \left(e^{s,j,k}\otimes\vf[~]^{s,j,k}\right)_{(s,j,k)\in\ar{\va[']}\times\ar{b}\times\ar{e^{s}}} \label{crit lemma eq 7}
    \end{align}
    where $\tau$ is the shuffle permutation. Writing $\widetilde{\sigma\tau}=\oplus_{s\in\ar{\va[']}}\sigma\tau$, we get that \begin{align}
        \va['] \circ \left(\sigma\tau\cdot \left(e^s\circ (b)_{k\in\ar{e^s}}\right)\right)_{s\in\ar{\va[']}} & = \widetilde{\sigma\tau}\cdot\left(\va[']\circ\left(e^s\circ (b)_{k\in \ar{e^s}}\right)_{s\in\ar{\va[']}}\right) \nonumber \\ & = \widetilde{\sigma\tau}\cdot\left(\left(\va[']\otimes e^s\right)\circ (b)_{(s,k)\in \ar{\va[']}\times\ar{e^s}} \right) \nonumber                                                                             \\ \va['] \circ \left(\sigma\tau\cdot \left(e^s\circ (b)_{k\in\ar{e^s}}\right)\right)_{s\in\ar{\va[']}} &= \widetilde{\sigma\tau}\cdot\left(\va[~]\otimes b\right) \label{crit lemma eq 8}
    \end{align}
    where $\va[~]= \va[']\circ (e^s)_{s\in\ar{\va[']}}
    $. Observe that as each $e^s\in \separateddiskoperad^\rho(B_V)$, we have that $\va[~]\in\separateddiskoperad^\rho(B_V)$.
    Putting this back into \cref{crit lemma eq 7} we get that \begin{align}
        z & = \left(\widetilde{\sigma\tau}\cdot (\va[~]\otimes b)\right)\circ \left(e^{s,j,k}\otimes\vf[~]^{s,j,k}\right)_{(s,j,k)\in\ar{\va[']}\times\ar{b}\times\ar{e^{s}}}
    \end{align} which is in $\additivecore$. Since $\phi$ is injective on $\additivecore$, we conclude that $z=\vz[~]$.
    Hence, we have proven the lemma.
\end{proof}

We can now prove the critical lemma of this section.

\weakdeformationlemma*
\begin{proof}
    For $x\in \staroperad^{\rho}(B_V)\otimes \staroperad^{\psi}(B_W)$, let $\lambda\in (0,1)$ be small enough that the element  \[z:=\vphi{x\circ (\lambda\id_V\otimes\id_W)_{i\in\ar{x}}}\] is such that if any of the projection disks $(\pr_Vz)_i$ or $(\pr_Wz)_i$ intersect then they share a common center. This implies that $z\in \vC[scr]$, and let us say that $z$ is separated by the pair $(a,b)$. Let $V_x \subseteq  \staroperad^{\rho\times \psi}(B_V\times B_W)(\ar{x})$ be a small open neighborhood of $x$ such that for any $y\in V_x$, the element $y\circ (\frac{\lambda}{2}\id_{V\times W})_{i\in \ar{x}}$ is in $\vC[scr]$ and is also separated by the pair $(a,b)$. It is not hard to see that this exists from geometric considerations.
    We then set $U_x:= \vphi[inv](V_x)$, and this is such that $H(U_x,1-\lambda/2)\in \vD[scr]\cap \vphi[inv]\vC[scr]$, and so we are done.
\end{proof}

\subsection{The General Case}

Let us now relax the condition that $\rho$ and $\psi$ are spherical representations. We have the following diagram 
\[\begin{tikzcd}
    \additivecore \ar[d,hookrightarrow]& \\ 
    \staroperad^{\rho_{tr}}(B_V)\otimes \staroperad^{\psi_{tr}}(B_W)\ar[r]\ar[d] & \staroperad^{\rho_{tr}\times \psi_{tr}}(B_V\times B_W) \ar[d,hookrightarrow]\\ 
    \staroperad^{\rho}(B_V)\otimes \staroperad^{\psi}(B_W)\ar[r] & \staroperad^{\rho\times \psi}(B_V\times B_W) \\ 
\end{tikzcd}\] where $\additivecore$ is the additive core of $\staroperad^{\rho_{tr}}(B_V)\otimes \staroperad^{\psi_{tr}}(B_W)$. Observe that we must have that $\additivecore$ embeds into $\staroperad^{\rho}(B_V)\otimes \staroperad^{\psi}(B_W)$. We can then use the homotopies of \Cref{spherical equivalence lemma}, which also lift onto the tensor, to show that the bottom row is a weak equivalence. We have then shown the following theorem.

\begin{theorem}
    Let $\rho$ and $\psi$ be dilation representations on the $G$-representations $V$ and $W$ respectively. Then the induced map of $G$-operads \[\staroperad^\rho(B_W)\otimes \staroperad^\psi(B_V)\to \staroperad^{\rho\times \psi}(B_W\times B_W)\] is a weak equivalence.
\end{theorem}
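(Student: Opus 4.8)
The plan is to bootstrap from the spherical case, \Cref{thm:basicadditivity}, by a two–out–of–three argument. Recall that $\rho_{tr}$ and $\psi_{tr}$ — the pullbacks of $\rho$ and $\psi$ along the inclusions $\DL(-,V_{tr})\hookrightarrow\DL(-,V_\bullet')$ into the trivial dilation decomposition — are \emph{spherical} dilation representations on $V$ and $W$. Write $\phi$ for the candidate map of \Cref{eq:candidatemap} attached to the pair $(\rho,\psi)$ and $\phi_{tr}$ for the one attached to the spherical pair $(\rho_{tr},\psi_{tr})$. The inclusions $\mathcal{G}_{tr}\hookrightarrow\mathcal{G}$ and $\mathcal{H}_{tr}\hookrightarrow\mathcal{H}$ induce a commutative square of $G$-operads
\[\begin{tikzcd}
	\staroperad^{\rho_{tr}}(B_V)\otimes \staroperad^{\psi_{tr}}(B_W)\ar[r,"\phi_{tr}"]\ar[d,"j"'] & \staroperad^{\rho_{tr}\times \psi_{tr}}(B_V\times B_W) \ar[d,"k"]\\
	\staroperad^{\rho}(B_V)\otimes \staroperad^{\psi}(B_W)\ar[r,"\phi"'] & \staroperad^{\rho\times \psi}(B_V\times B_W),
\end{tikzcd}\]
and it is enough to prove that $\phi_{tr}$, $j$ and $k$ are weak equivalences: then $\phi\circ j=k\circ\phi_{tr}$ is a weak equivalence and, $j$ being one too, so is $\phi$.

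Two of these are essentially free. Since $\rho_{tr}$ and $\psi_{tr}$ are spherical, $\phi_{tr}$ is a weak equivalence by \Cref{thm:basicadditivity}. For the right-hand vertical $k$, factor it as
\[\staroperad^{(\rho\times\psi)_{tr}}(B_V\times B_W)\hookrightarrow\staroperad^{\rho_{tr}\times\psi_{tr}}(B_V\times B_W)\xrightarrow{\ k\ }\staroperad^{\rho\times\psi}(B_V\times B_W);\]
the composite and the first map are weak equivalences by \Cref{spherical equivalence lemma} applied to the dilation representations $\rho\times\psi$ and $\rho_{tr}\times\psi_{tr}$ respectively (their sphericalizations coincide, both being $(\rho\times\psi)_{tr}$), so $k$ is a weak equivalence by two–out–of–three.

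The real content is the left-hand vertical $j$. Here one cannot simply invoke homotopy invariance — the Boardman–Vogt tensor is not homotopical, and the weak equivalences $\staroperad^{\rho_{tr}}(B_V)\xrightarrow{\sim}\staroperad^{\rho}(B_V)$, $\staroperad^{\psi_{tr}}(B_W)\xrightarrow{\sim}\staroperad^{\psi}(B_W)$ of \Cref{spherical equivalence lemma} do not formally tensor to a weak equivalence. Instead I would lift the strong deformation retract $H$ of \Cref{spherical equivalence lemma} to the tensor: $H$ acts on a tuple by right-composing each component with the element $s_\rho\lambda(-,t)\in\mathcal{G}^\rho$ it determines, and this is exactly the type of componentwise operation that was shown in \Cref{sec:additivity} (in the passage from $H$ to $\widetilde H$) to be well defined on $\staroperad^{\rho}(B_V)\otimes\staroperad^{\psi}(B_W)$, using the identification $\bigl(\staroperad^{\rho}(B_V)\otimes\staroperad^{\psi}(B_W)\bigr)(1)\cong\mathcal{G}^\rho(B_V)\times\mathcal{H}^\psi(B_W)$ of \Cref{lem:unaryinjection}. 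Carrying this out for $\rho$ and for $\psi$ simultaneously produces a deformation of $\staroperad^{\rho}(B_V)\otimes\staroperad^{\psi}(B_W)$ whose time-$1$ image lands in a copy of $\staroperad^{\rho_{tr}}(B_V)\otimes\staroperad^{\psi_{tr}}(B_W)$; running the same lifted homotopy on the source one checks, as in \Cref{sec:additivity}, that $j$ is a weak equivalence of $G$-collections, hence of $G$-operads. As an alternative packaging of the same idea — the one gestured at in the discussion above — the square shows that the additive core $\additivecore$ of the spherical tensor embeds into $\staroperad^{\rho}(B_V)\otimes\staroperad^{\psi}(B_W)$ and that $\phi$ restricts to an embedding on it (because $k$ is a closed embedding and $\phi_{tr}$ restricted to $\additivecore$ is an embedding by \Cref{theorem: injective on core}); the lifted homotopies then exhibit weak deformation retracts of both $\staroperad^{\rho}(B_V)\otimes\staroperad^{\psi}(B_W)$ and $\staroperad^{\rho\times\psi}(B_V\times B_W)$ onto $\additivecore$ and its image, and the theorem follows verbatim as in the proof of \Cref{thm:basicadditivity}.

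I expect the step on $j$ — making rigorous that the sphericalizing homotopy descends to the Boardman–Vogt tensor and yields an honest deformation retract, not merely a homotopy of representatives — to be the only non-formal ingredient. It is, however, precisely parallel to the $\widetilde H$ bookkeeping already carried out in \Cref{sec:additivity}, so it should go through without new difficulties, and the rest of the argument is a formal diagram chase.
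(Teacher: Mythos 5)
Your overall strategy is the paper's: the same commutative square, reduction to the spherical case \Cref{thm:basicadditivity}, the observation that the additive core $\additivecore$ of the spherical tensor embeds into $\staroperad^{\rho}(B_V)\otimes\staroperad^{\psi}(B_W)$, and lifted sphericalization homotopies. Your handling of the right-hand vertical $k$ (two-out-of-three through $\staroperad^{(\rho\times\psi)_{tr}}(B_V\times B_W)$, noting that the sphericalizations of $\rho\times\psi$ and $\rho_{tr}\times\psi_{tr}$ agree) is correct and in fact more explicit than the paper, which simply invokes \Cref{spherical equivalence lemma}.

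The gap is in your preferred route through the left-hand vertical $j$. You claim that the lift of the homotopy of \Cref{spherical equivalence lemma} to the tensor has time-one image inside a copy of $\staroperad^{\rho_{tr}}(B_V)\otimes\staroperad^{\psi_{tr}}(B_W)$. The lifted homotopy composes each slot with unary elements determined by the components $x_i\in\bigl(\staroperad^{\rho}(B_V)\otimes\staroperad^{\psi}(B_W)\bigr)(1)\cong\mathcal{G}^\rho(B_V)\times\mathcal{H}^\psi(B_W)$, so at time one every \emph{component} of $\widetilde H(x,1)$ is spherical; but that does not place the element in the sub-tensor generated by spherical vertex labels. A tree representative of $\widetilde H(x,1)$ is the old representative with unary sphericalizing factors grafted at the leaves: its internal vertices are still labelled by non-spherical elements, and neither interchange nor the single-colour operad relations let you push those unary factors past intermediate vertices of the other colour (nor can you factor a general element as a spherical element composed with unary elements inside the operad, since $s_\rho\lambda(x_i)^{-1}$ has eigenvalues $\geq 1$ and leaves $\mathcal{G}^\rho(B_V)$). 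This is exactly the ``elements of the tensor are not determined by their components'' phenomenon that made $\phi$ non-injective and forced the introduction of $\additivecore$; consequently the paper never asserts, and never needs, that $j$ is a weak equivalence. Its argument is the one you relegate to an ``alternative packaging'': $\additivecore$ embeds into the bottom-left tensor, $\phi$ restricts to an embedding on it (via \Cref{theorem: injective on core} and the closed embedding $k$), and the lifted homotopies of \Cref{spherical equivalence lemma}, combined with the shrinking homotopies and the analysis culminating in \Cref{lemma: weak deformation lemma}, exhibit both ends of the bottom row as weakly deformation retracting onto $\additivecore$ and its image, whence the bottom row is a weak equivalence as in the proof of \Cref{thm:basicadditivity}. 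Promote that paragraph to the main argument and drop (or independently prove) the claim that $j$ is an equivalence; as written, that step would not go through.
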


\begin{remark}
    While we have only proved this for a pair of dilation representations $\rho$, $\psi$ and open balls $B_V$, $B_W$. The techniques of this section work equally well for any number of finite representations and products of open balls.
\end{remark}

\appendix
\section{Homotopy Correction Lemmas}\label{appendix: correction lemmas}
Throughout this paper we often have spaces $A \subseteq X$ and maps $H:X\times [0,1)\to X$ such that for every $x\in X$, the flow line $t\mapsto H(x,t)$ eventually lands in $A$.  In this small appendix, we will give two elementary lemmas which give us sufficient conditions on when we can convert these into proper homotopies which exhibit the two spaces $A$ and $X$ as weak equivalences. 

For us, a weak $G$-retract is a $G$-map $r:X\to A$ such that if $i:A\to X$ is the inclusion, then $ri\simeq \id_A$ and $ir\simeq \id_X$. A weak $G$-deformation retraction is a $G$-homotopy $H:X\times [0,1]\to X$ such \begin{enumerate*}
    \item $H(x,0)=\id_X$,
    \item $H(X,1) \subseteq A$, and 
    \item we have a well-defined restriction $\restr{H}{A\times I}:A\times I\to A$.
\end{enumerate*} Such a homotopy exhibits $r(x):=H(x,1)$ as a weak $G$-retract.

\begin{lemma}\label{lem:weakdeformationcorrection}
    Given a $G$-space $X$ which is Hausdorff and paracompact, and a $G$-subspace $A \subseteq X$. If we have a continuous $G$-map $H:X\times [0,1) \to X$ such that for all $x\in X$, 
    \begin{conditions}
        \item\label{lem:weakdeformationcorrection condition 1} there exists an open neighborhood $U_x$ of $x$ such that for some $t_x\in [0,1)$ we have that $H(U_x,t_x) \subseteq A$,
        \item\label{lem:weakdeformationcorrection condition 2} if $H(x,t)\in A$, then $H(x,s)\in A$ for all $s\geq t$,
    \end{conditions}
    Then there exists a $G$-invariant map \[\lambda:X\to [0,1)\] such that the following homotopy \[D(x,t):=H(x,\lambda(x)t)\] is a weak $G$-deformation retract of $X$ onto $A$.
\end{lemma}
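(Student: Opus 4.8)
The plan is to manufacture $\lambda$ from a partition of unity subordinate to the cover supplied by hypothesis (1), and then to average the resulting function into a $G$-invariant one --- the delicate point being that the averaging must not destroy the containment property we need. First I would use hypothesis (1) to fix, for each $x\in X$, an open neighbourhood $U_x$ and a time $t_x\in[0,1)$ with $H(U_x,t_x)\subseteq A$; applying hypothesis (2) pointwise then gives $H(U_x,s)\subseteq A$ for every $s\ge t_x$. Since $X$ is Hausdorff and paracompact, I would choose a locally finite partition of unity $\{\phi_\alpha\}$ subordinate to $\{U_x\}$, with $\operatorname{supp}\phi_\alpha\subseteq U_{x(\alpha)}$, write $t_\alpha:=t_{x(\alpha)}\in[0,1)$, and set
\[
\mu(x):=\sum_\alpha \phi_\alpha(x)\,t_\alpha .
\]
This is a locally finite sum, hence continuous, and at each point it is a convex combination of finitely many numbers in $[0,1)$, so $\mu(x)\in[0,1)$.

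The key observation is that $H(x,s)\in A$ for every $s\ge\mu(x)$. Indeed, pick an index $\alpha_0$ with $\phi_{\alpha_0}(x)>0$ minimising $t_\alpha$ over that finite nonempty set of indices; then $x\in\operatorname{supp}\phi_{\alpha_0}\subseteq U_{x(\alpha_0)}$, so $H(x,t_{\alpha_0})\in A$, and since $\mu(x)\ge t_{\alpha_0}$ hypothesis (2) gives $H(x,s)\in A$ for all $s\ge\mu(x)$. Because $G$ is finite, I would then put
\[
\lambda(x):=\max_{g\in G}\mu(gx),
\]
which is continuous (a maximum of finitely many continuous functions), $G$-invariant, valued in $[0,1)$, and satisfies $\lambda(x)\ge\mu(x)$; hence $H(x,s)\in A$ for all $s\ge\lambda(x)$, and in particular $H(x,\lambda(x))\in A$.

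Finally I would check that $D(x,t):=H(x,\lambda(x)t)$ is a weak $G$-deformation retraction of $X$ onto $A$. It is defined on $X\times[0,1]$ since $\lambda(x)t\in[0,\lambda(x)]\subseteq[0,1)$, it is continuous as a composite, and it is $G$-equivariant because $H$ is a $G$-map and $\lambda$ is $G$-invariant. Using that $H(-,0)=\operatorname{id}_X$ (which is needed for the conclusion to make sense and holds in all our applications) we get $D(-,0)=\operatorname{id}_X$; the previous paragraph gives $D(X,1)\subseteq A$; and for $a\in A$ we have $H(a,0)=a\in A$, so hypothesis (2) forces $H(a,s)\in A$ for every $s\in[0,1)$, whence $D$ restricts to a map $A\times[0,1]\to A$. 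I expect the only real obstacle to be the equivariance step: symmetrising $\mu$ by an orbit-wise maximum, rather than an average (which could drop the value below some $t_\alpha$ and so lose $H(x,\lambda(x))\in A$), is precisely what makes the construction work while keeping $\lambda$ valued in $[0,1)$.
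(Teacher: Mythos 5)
Your proof is correct and takes essentially the same route as the paper's: a partition of unity subordinate to the cover from condition (1), a pointwise convex combination of the times $t_\alpha$ (which is bounded below by the minimum active $t_\alpha$, so condition (2) applies), and then the homotopy $D(x,t)=H(x,\lambda(x)t)$. The only difference is how $G$-invariance is arranged --- the paper averages the partition of unity over the $G$-invariant cover $\{G\cdot U_x\}$, while you take an orbit-wise maximum of the resulting function $\mu$ --- and both work; you also correctly flag the implicit assumption $H(-,0)=\operatorname{id}_X$ that the paper uses without comment.
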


\begin{proof}
    We can build a cover of $G$-invariant subspaces $\set{G\cdot U_x}_x:= \set{\bigcup_{g\in G}gU_x}_x$, and as $X$ is paracompact and Hausdorff, take a partition of unity $\set{\overline{\rho}_i}_{i\in I}$ subordinate to $\set{G\cdot U_x}_x$. Averaging over the group \[\rho_i(x):= \frac{1}{\abs{G}}\sum_{g\in G}\vrho[i,oline](gx) \] then gives us a $G$-invariant partition of unity $\set{\vrho[i]}_{i\in I}$ subordinate to $\set{G\cdot U_x}$.
    For each $i\in I$, there exists an $x\in X$ such that \[\operatorname{supp}(\rho_i) \subseteq G\cdot U_x,\] and so for $t_i:= \max_{g\in G}\set{t_{gx}}$ from \cref{lem:weakdeformationcorrection condition 1,lem:weakdeformationcorrection condition 2}, we have that \[H(\operatorname{supp}(\rho_i),t_i)\subseteq A.\]
    For $x\in X$, we will write $I(x)=\set{i\in I \given \rho_i(x)\neq 0}$. 
   
   We construct a $G$-invariant function $\lambda: X \to [0,1)$ by \[\lambda(x) = \sum_{i\in I}t_i\rho_i(x). \] Note that we have that \begin{align*}
    \lambda(x) &= \sum_{i\in I}t_i\rho_i(x) \\ &= \sum_{i\in I(x)} t_i\rho_i(x) \\ &\geq \sum_{i\in I(x)}\min\set{t_i\given i\in I(x)}\rho_i(x) \\ &= \min\set{t_i\given i\in I(x)} \sum_{i\in I(x)}\rho_i(x) \\ &=  \min\set{t_i\given i\in I(x)} \end{align*}
    Hence, we have that $H(x,\lambda(x))\in A$ for all $x\in X$. It then follows that the map \begin{align*}
        D: X\times [0,1] &\to X \\
        D(x,t) &:= H(x,\lambda(x)t),
    \end{align*}
    is a weak $G$-deformation retract using \cref{lem:weakdeformationcorrection condition 2}.
\end{proof}

The following is a stronger result that gives sufficient conditions for a strong $G$-deformation retract instead. Note that while we prove this for general topological spaces using nets, in practice, this will only be applied to spaces in which the sequence characterization of continuity works.

\begin{lemma}\label{lem:deformationcorrection}
    Given a $G$-space $X$ and a closed $G$-subspace $A \subseteq X$. If we have a continuous $G$-map $H:X\times [0,1) \to X$ such that for all $x\in X$, 
    \begin{enumerate}
        \item there exist $t\in [0,1)$ such that $H(x,t)\in A$, 
        \item if $H(x,t)\in A$, then $H(x,s)\in A$ for all $s\geq t$,
        \item the set $\left(\set{x}\times [0,1)\right) \cap H^{-1}(\partial A)$ is a singleton.
    \end{enumerate}
    Then the mapping $\phi:x \mapsto t_x$ where $t_x= \min\set{t\in [0,1)\given H(x,t)\in A}$ is continuous and $G$-invariant. As a consequence, $A$ is a strong $G$-deformation retract of $X$ given by the deformation \[D(x,t) := H(x,\phi(x)t).\]
\end{lemma}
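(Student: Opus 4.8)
The plan is to show that the function $\phi(x) = t_x := \min\{t \in [0,1) : H(x,t) \in A\}$ is well-defined, $G$-invariant, and continuous; once continuity is in hand, the formula $D(x,t) := H(x,\phi(x)t)$ visibly gives a strong $G$-deformation retract (it is continuous as a composite, $D(x,0) = H(x,0)$, $D(x,1) = H(x,\phi(x)) \in A$ by condition (1)–(2), and for $x \in A$ we have $\phi(x) = 0$ by condition (2) applied with $t = 0$, so $D(x,t) = H(x,0) = x$ for all $t$, hence $D$ restricts to the stationary homotopy on $A$). First I would note that $t_x$ is well-defined: condition (1) says the set $\{t : H(x,t) \in A\}$ is nonempty, and by condition (2) it is an up-set in $[0,1)$, so it is an interval $[t_x, 1)$ or $(t_x,1)$; since $A$ is closed and $t \mapsto H(x,t)$ is continuous, the set is closed in $[0,1)$, so it is $[t_x,1)$ and the minimum is attained. $G$-invariance is immediate: $H$ is a $G$-map and $A$ is a $G$-subspace, so $H(x,t) \in A \iff H(gx,t) \in A$, whence $t_{gx} = t_x$.

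The substance is continuity of $\phi$, and this is where condition (3) enters. I would argue via nets (or sequences, in the cases of interest): suppose $x_\alpha \to x$ but $\phi(x_\alpha) \not\to \phi(x)$. Passing to a subnet, either $\phi(x_\alpha) \to s$ for some $s \neq \phi(x)$ in $[0,1]$, or $\phi(x_\alpha) \to 1$. Consider first the case $\phi(x_\alpha) \to s < \phi(x)$. For each $\alpha$, $H(x_\alpha, \phi(x_\alpha)) \in A$; by continuity of $H$ on $X \times [0,1)$ and $A$ closed, $H(x, s) \in A$, contradicting minimality of $\phi(x) > s$. Next suppose $\phi(x_\alpha) \to s$ with $\phi(x) < s \le 1$; pick $t$ with $\phi(x) < t < s$ (and $t<1$). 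Then $H(x,t) \in A$ by condition (2). I claim $H(x,t)$ lies in the interior of $A$: this is exactly where condition (3) is used. If $H(x,t) \in \partial A$, then since by condition (2) every point $H(x,t')$ with $\phi(x) \le t' < t$ also lies in $A$, and the flow line is a path landing in $\partial A$, one shows the segment $\{x\} \times [\phi(x), t]$ meets $H^{-1}(\partial A)$ in more than one point unless $H(x,t') \in \operatorname{Int}(A)$ for $t' > \phi(x)$; more carefully, condition (3) forces $H^{-1}(\partial A) \cap (\{x\}\times[0,1))$ to be the single point $(x, \phi(x))$, so $H(x,t) \in \operatorname{Int}(A)$ for every $t > \phi(x)$. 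Hence $H(x,t) \in \operatorname{Int}(A)$; since $H$ is continuous and $x_\alpha \to x$, eventually $H(x_\alpha, t) \in A$, so $\phi(x_\alpha) \le t < s$ eventually, contradicting $\phi(x_\alpha) \to s$. The same interior argument rules out $\phi(x_\alpha) \to 1$: pick any $t$ with $\phi(x) < t < 1$, get $H(x_\alpha,t) \in A$ eventually, so $\phi(x_\alpha) \le t$ eventually, contradiction. This exhausts the cases, so $\phi$ is continuous.

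I expect the main obstacle to be the interior step — showing that condition (3) really does imply $H(x,t) \in \operatorname{Int}(A)$ for all $t > \phi(x)$, rather than merely $H(x,t) \in A$. The subtlety is that a priori the flow line could re-enter $\partial A$ at a later time; condition (3) is precisely designed to forbid this, but one must phrase the argument so that it cleanly uses "the preimage of $\partial A$ along this flow line is a single point" together with the fact that points of $A$ near a boundary point that the flow line passes through transversally are interior. I would handle this by the contrapositive: if $H(x,t_0) \in \partial A$ for some $t_0 > \phi(x)$, then because $(x,\phi(x))$ is already in $H^{-1}(\partial A)$ and $\phi(x) < t_0$ gives a second point, we contradict (3) — provided we first check $H(x,\phi(x)) \in \partial A$, which holds since $H(x,\phi(x)) \in A$ (closed) while $H(x,t') \notin A$ for $t' < \phi(x)$, so $H(x,\phi(x))$ is a limit of points of $X \setminus A$, hence in $\partial A$. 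That observation closes the gap, and the rest is the routine net-chasing sketched above.
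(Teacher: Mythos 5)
Your proposal is correct, and it follows the same overall strategy as the paper: well-definedness and $G$-invariance from closedness of $A$ and equivariance of $H$, then continuity of $\phi$ by a net argument in which compactness of $[0,1]$ reduces everything to showing that any convergent subnet of $(\phi(x_\alpha))$ has limit $\phi(x)$. Where you differ is the mechanism of the key step. The paper takes a subnet with $\phi(x_{f(s)})\to a$, gets $\phi(x)\le a$ from closedness, and then perturbs by an auxiliary net $y_s$ chosen just below $\phi(x_{f(s)})$ so that $H(x_{f(s)},y_s)\notin A$; since $y_s\to a$ this exhibits $H(x,a)\in\partial A$, and the singleton condition (3), together with $(x,\phi(x))\in H^{-1}(\partial A)$, forces $a=\phi(x)$. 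You instead split into the cases $s<\phi(x)$ (closedness contradicts minimality of $\phi(x)$) and $s>\phi(x)$, and in the latter show that an intermediate point $H(x,t)$ with $\phi(x)<t<s$ lies in $\operatorname{Int}(A)$ because $\phi(x)$ is the unique boundary time, then use openness of the interior to get $\phi(x_\alpha)\le t<s$ eventually, a contradiction. The two arguments are close cousins, but yours has the merit of explicitly justifying $H(x,\phi(x))\in\partial A$ (the paper asserts this without comment) and of handling the subnet limit $s=1$ cleanly, which the paper's expression $H(x,a)$ quietly assumes away; your retract discussion is also more explicit, though like the paper it implicitly uses $H(x,0)=x$. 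One small caveat: your justification that $H(x,\phi(x))\in\partial A$ uses points $H(x,t')\notin A$ with $t'<\phi(x)$ and is vacuous when $\phi(x)=0$, in which corner case the blanket claim that $H(x,t)\in\operatorname{Int}(A)$ for \emph{every} $t>\phi(x)$ can fail at the single time where the flow line meets $\partial A$. Since condition (3) provides exactly one such time, choosing your intermediate $t$ to avoid it repairs the argument in one line — and the paper's own proof is no more careful on this point.
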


\begin{proof}
    Since $A$ is closed, the map \[\phi:X\to [0,1), \quad x \mapsto t_x\] is well-defined, and as $H$ is equivariant, it is $G$-invariant. So all we need to prove is that this map is continuous. We do this via the net characterization of continuity. Let $(x_i)_{i\in J}$ be a net in $X$ where $J$ is a directed set, and $x\in X$ such that $x_i\to x$ in $X$. We then need to prove that $\phi(x_i)\to \phi(x)$.

    If we had that $\phi(x_i)\nrightarrow \phi(x)$, then by viewing $\phi$ as having codomain $I=[0,1]$, which is compact, we must have that the net $(\phi(x_i))_{i\in J}$ has a cluster point which is \emph{not} $x$ in $I$. Hence, it is sufficient to show that every subnet of $(\phi(x_i))_{i\in J}$ that is convergent in $I$ must converge to $\phi(x)$.

    Suppose we have a subnet $(x_{f(s)})_{s\in S}$ of $(x_i)_{i\in I}$, where $f:S\to I$ is a monotone final function, such that $\phi(x_{f(s)})\to a\in I$.
    Let us consider the set $$C:=\set{s\in S \given \phi(x)=\phi(x_{f(s)})} \subseteq S.$$ If $C$ is cofinal in $S$, then we must have that $a=\phi(x)$ since $I$ is Hausdorff and limits are unique. Hence, let us assume that $C$ is not cofinal.
     
    As we have that \begin{align*}
        H(x_{f(s)},\phi(x_{f(s)}))\to H(x,a)
    \end{align*} and each $H(x_{f(s)},\phi(x_{f(s)}))\in A$, which is closed, we have that $H(x,a)\in A$ and so $0\leq \phi(x)\leq a$ by the definition of $\phi$. If $a=0$, then $\phi(x)=a$ in this case. Therefore, suppose we have that $a>0$. For $\epsilon>0$ consider the net $(y_s)_{s\in S} $ defined by \[y_s :=\max\left(0,\phi(x_{f(s)})-\epsilon\vert a-\phi(x_{f(s)})\vert \right)\] Since $C$ isn't cofinal, the net $(y_s)$ is such that eventually $y_s<\phi(x_{f(s)})$ and so $H(x_{f(s)},y_s)\in X\backslash A$. Since $y_s\to a$ it follows that $H(x,a)\in \partial A$ and $(x,a)\in\set{x}\times [0,1) \cap H^{-1}(\partial A)$. However, we also have that $(x,\phi(x))\in\set{x}\times [0,1) \cap H^{-1}(\partial A)$ and as this set is a singleton we must have $a=\phi(x)$.
\end{proof}

\section{Topology Lemmas}\label{appendix:topology lemmas}
In this appendix we prove some technical topology results we need in order to justify that our injective maps between operads are in fact embeddings. Recall \Cref{ass:topologicalgroups} and \Cref{filtered group conventions}.

We will first prove that the various suboperads are closed in the universal ambient operad.

\begin{lemma}\label{suboperads are closed lemma}
    For any star domain $S$ and dilation representation $\rho$, the operads $\staroperad^{\rho}(S)$ and $\ambientoperad^\rho(S)$ are closed in $\ambientoperad^\rho$.
\end{lemma}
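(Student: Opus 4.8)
The plan is to work level by level, establishing closedness of $\ambientoperad^\rho(S)(n)$ inside $\ambientoperad^\rho(n) = \prod_{i\in\Ord n}\mathcal{G}^\rho$ first, and then deducing closedness of $\staroperad^\rho(S)(n)$ inside $\ambientoperad^\rho(S)(n)$. Recall that $\ambientoperad^\rho(S)(n) = \prod_{i\in\Ord n}\mathcal{G}^\rho(S)$, so it suffices to show that $\mathcal{G}^\rho(S) = \set{x\in\mathcal{G}^\rho \given (g\cdot x)(S)\subseteq S\text{ for all }g\in G}$ is closed in $\mathcal{G}^\rho$. First I would fix a point $s\in V$ and a group element $g\in G$; the condition ``$(g\cdot x)(s')\in \overline S$ for all $s'$ in a dense countable subset $D\subseteq S$'' is a closed condition on $x$, since for each fixed $s'$ the evaluation map $x\mapsto (g\cdot x)(s') = \hat\rho(g\cdot x)(s')$ is continuous $\mathcal{G}^\rho\to V$ (it is the composite of conjugation by $g$, the continuous homomorphism $\hat\rho$, and the continuous action map $\DA(W_\bullet,V_\bullet)\times V\to V$), and $\overline S$ is closed in $V$. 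The subtlety is that $(g\cdot x)(S)\subseteq S$ (open $S$, not its closure) is \emph{a priori} only locally closed; here I would use that $x\in\mathcal{G}^\rho$ is an \emph{invertible} affine dilation, so $(g\cdot x)$ is an open map, hence $(g\cdot x)(\overline S)\subseteq \overline S$ combined with $(g\cdot x)$ open forces $(g\cdot x)(S)=(g\cdot x)(\operatorname{Int}\overline S)\subseteq \operatorname{Int}\overline S = S$ (using that $S$ is an open star domain, so $S = \operatorname{Int}(\overline S)$). Thus $\set{x\given (g\cdot x)(S)\subseteq S} = \set{x\given (g\cdot x)(D)\subseteq\overline S}$ is closed, and intersecting over the finitely many $g\in G$ (or, if one prefers, over all $g\in G$) shows $\mathcal{G}^\rho(S)$ is closed.

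Next I would handle $\staroperad^\rho(S)(n)\subseteq\ambientoperad^\rho(S)(n)$. By definition $x=(x_i)\in\staroperad^\rho(S)(n)$ iff for all $i\neq j\in\Ord n$ and all $g\in G$ one has $(g\cdot x_i)(S)\cap (g\cdot x_j)(S)=\emptyset$. The complement of this condition is the union over $i\neq j$ and $g\in G$ of the sets $\set{x \given (g\cdot x_i)(S)\cap (g\cdot x_j)(S)\neq\emptyset}$, so it suffices to show each such set is open in $\ambientoperad^\rho(S)(n)$. For a fixed $i,j,g$, the sets $(g\cdot x_i)(S)$ and $(g\cdot x_j)(S)$ are open disks (images of the open star domain $S$ under invertible affine dilations), depending continuously on $x$ in the sense that their centers and radii vary continuously with $x_i,x_j$. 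If the two open sets intersect for a particular $x^0$, pick a point $p$ in the intersection; then $p\in(g\cdot x^0_i)(S)$ and $p\in (g\cdot x^0_j)(S)$ are \emph{open} membership conditions, i.e. $(g\cdot x^0_i)^{-1}(p)\in S$ and $(g\cdot x^0_j)^{-1}(p)\in S$, and since inversion and evaluation are continuous and $S$ is open, these persist on a neighborhood of $x^0$. Hence the bad set is open, so $\staroperad^\rho(S)(n)$ is closed in $\ambientoperad^\rho(S)(n)$, and therefore closed in $\ambientoperad^\rho(n)$.

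I expect the main obstacle to be the first paragraph's passage from ``$(g\cdot x)(\overline S)\subseteq\overline S$'' to ``$(g\cdot x)(S)\subseteq S$'': one must genuinely use invertibility (openness) of the affine dilation together with $S=\operatorname{Int}\overline S$, and one must be slightly careful in the infinite-dimensional case that the star domain $S$, being bounded and open, has $\overline S$ a reasonable set and that evaluation maps $\mathcal{G}^\rho\times V\to V$ are jointly continuous — which follows since every element of $\mathcal{G}^\rho$ is the identity outside a finite subspace by the standing conventions, so on each finite-dimensional piece the action map is the usual continuous affine action. A cleaner alternative, which I would mention as a remark, is to note that $S$ being a bounded open star domain means $x\mapsto \sup\set{t>0 \given t\cdot((g\cdot x)(\overline{B})) \subseteq S}$-type gauges are continuous, giving the closed/open conditions directly; but the argument via $S=\operatorname{Int}\overline S$ is the most economical. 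Everything else — continuity of evaluation, closedness of $\overline S$, finiteness of $G$ — is routine.
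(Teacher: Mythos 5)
Your argument is correct in substance and is essentially the paper's argument in different packaging: the paper proves closedness of $\ambientoperad^\rho(S)$ by a sequential limit argument (a limit point $x_i(p)$ of points of $S$ lies in $\overline S$), and proves closedness of $\staroperad^\rho(S)$ by showing that an intersection $x_i(S)\cap x_j(S)\neq\emptyset$ persists for nearby elements (phrased there via the element $y^k=(x_j^k)^{-1}\circ x_i^k$ and openness of $S$); your ``closed conditions / open complement'' formulation, including the witness-point argument for the disjointness condition, is the same content. (Two small remarks: the countable dense subset $D$ is unnecessary, since arbitrary intersections of closed sets are closed; and finiteness of $G$ is likewise not needed.)

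The one step to flag is your claim that $S=\operatorname{Int}(\overline S)$ ``using that $S$ is an open star domain.'' That does not follow from the paper's definition of star domain: take $V=\mathbb{R}^2$ and $S=B\setminus L$ with $B$ the open unit disk and $L=\{(t,0):\tfrac12\le t\le 1\}$; this is a bounded open star domain about the origin, but $\overline S=\overline B$ and $\operatorname{Int}(\overline S)=B\neq S$. So the passage from ``$(g\cdot x)(S)\subseteq\overline S$'' to ``$(g\cdot x)(S)\subseteq S$'' genuinely needs $S$ to be regular open (equivalently $\operatorname{Int}\overline S=S$), not merely a star domain. You are in good company: the paper's own proof makes exactly the same move when it says ``since the map $x_i$ is open, we can assume $x_i(p)\notin\overline S$,'' which is the assertion that an open set contained in $\overline S$ is contained in $S$. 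For the domains actually used in the paper --- products of open balls, which are convex and hence regular open --- both your step and the paper's are valid; if one insists on the lemma for literally arbitrary star domains, both proofs would need an extra argument (or the hypothesis $S=\operatorname{Int}\overline S$) at this point. Your closing remark about gauge-type functions is not needed and, as written, is too vague to substitute for this step.
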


\begin{proof}
    Let us first show that $\ambientoperad^\rho(S)$ is closed in $\ambientoperad^\rho$.
    Suppose we have a sequence $(x^k)_{k\in \mathbb{N}}\in \ambientoperad^\rho(S) $ such that $x^k\to x$ where $x\in \ambientoperad^\rho$.
    If $x\notin \ambientoperad^\rho(S)$, then there exists $p\in S$ such that for some $i\in\ar(x)$ we have that $x_i(p)\notin S$.
    Since the map $x_i$ is open, we can assume $p\notin\overline{S}$.
    However, $x_i^k(p) \to x_i(p)$ in $V$, but $x_i^k(p)\in S$ and so this is a contradiction.
    Hence, we deduce that $x\in \ambientoperad^\rho(S)$ and so $\ambientoperad^\rho(S)$ is closed in $\ambientoperad^\rho$.

    To prove $\staroperad^\rho(S)$ is closed we do something similar.
    Suppose we have a sequence $(x^k)_{k\in \mathbb{N}}\in \staroperad^\rho(S) $ such that $x^k\to x$ where $x\in \ambientoperad^\rho$.
    The previous argument shows that $x\in \ambientoperad^\rho(S)$, and so we need to show that the components of $x$ don't overlap.

    Suppose they do overlap.
    That is, for some $i\neq j\in \ar(x)$ we have that \[x_i(S)\cap x_j(S)\neq \emptyset.
    \]
    We recall that $\mathcal{G}^\rho(S)$ is a $G$-topological group and setting $y^k:=(x_j^k)^{-1}\circ x_i^k \to \id$, since the components of $x^k$ are disjoint, we have that \[y^k(S)\cap S = \emptyset.\]
    Since $y^k$ is a homeomorphism, we have that \[\overline{y^k(S)}\cap S = \emptyset.\]
    By our assumption, we have that \[y(S)\cap S \neq \emptyset\] this implies that there must be a point $p\in S$ such that $y^k(p)\to y(p)\in S$, but this is a contradiction.
    Therefore, $\staroperad^\rho(S)$ is closed in $\ambientoperad^\rho$.
\end{proof}

The following is an argument of \cite{barataAdditivityLittleCubes2022}.

\begin{lemma}\label{proper map from tensor lemma}
	Suppose we have interchanging operad maps \begin{align*}
		f & :P\to R \\ g&:Q\to R
	\end{align*} such that \begin{enumerate}
		\item the maps $f,$ and $g$ are proper, and
		\item the composition maps of $R$ are proper.
	\end{enumerate}
	Then the composition map \[\propergenerators(P,Q)\xrightarrow{q} P\otimes Q \xrightarrow{f\otimes g} R\] is proper.
\end{lemma}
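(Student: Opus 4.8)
The plan is to show that the map $\propergenerators(P,Q)\xrightarrow{q} P\otimes Q\xrightarrow{f\otimes g} R$ is proper by exhibiting it as a countable union of restrictions each of which is proper, then assembling these into a proper map. Recall that $\propergenerators(P,Q)(n)=\coprod_{[T]}F(P,Q)(T)$, a coproduct over isomorphism classes of proper trees in superposition with $n$ inputs. Each summand $F(P,Q)(T)$ is built inductively from the spaces $P(\chi_W(v))$ and $Q(\chi_B(v))$ for the vertices $v$ of $T$, using finite products. The key structural fact I would exploit is that a proper tree in superposition on $\Ord n$ has a bounded number of vertices and bounded arities at each vertex (properness forces every unary vertex to be an input vertex and the tree to be reduced, so the number of non-input vertices is controlled by $n$), hence there are only \emph{finitely many} isomorphism classes of proper trees with $n$ inputs. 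Therefore $\propergenerators(P,Q)(n)$ is a \emph{finite} coproduct of spaces, each a finite product of spaces of the form $P(k)$ and $Q(l)$.

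With this in hand, the first step is to observe that properness is preserved under finite coproducts and finite products, and is a local property on the source, so it suffices to check that the composite is proper on each summand $F(P,Q)(T)$ individually. On such a summand, I would unwind the definition of $q_T$ and of the induced map $f\otimes g$: the element associated to a labelling of $T$ is an iterated operadic composition in $R$ of the elements $f(p_v)\in R$ and $g(q_v)\in R$ over the vertices $v$ of $T$. Thus the composite $(f\otimes g)\circ q$ restricted to $F(P,Q)(T)$ factors as
\[
\prod_{v\in V(T)} \bigl(P(\chi_W(v))\times Q(\chi_B(v))\bigr)
\xrightarrow{\prod (f\times g)}
\prod_{v\in V(T)} \bigl(R(\chi_W(v))\times R(\chi_B(v))\bigr)
\xrightarrow{\ \Gamma_T\ } R(\vertexinput\text{-tree of }T),
\]
where $\Gamma_T$ is a finite iterated application of the structure maps $\gamma_R$ (interleaved with permutation maps $R(\tau_{m,n})$, which are homeomorphisms and hence proper). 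The first map is a finite product of copies of $f$ and $g$, which is proper because a finite product of proper maps between reasonably nice (e.g. locally compact Hausdorff, or compactly generated weak Hausdorff) spaces is proper, using hypothesis (1). The second map $\Gamma_T$ is a finite composite of the composition maps of $R$, each proper by hypothesis (2), together with homeomorphisms; a finite composite of proper maps is proper. Hence the restriction to each $F(P,Q)(T)$ is proper.

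Finally, I would conclude by noting that a map out of a finite coproduct is proper iff its restriction to each summand is proper (preimages of compact sets decompose as finite disjoint unions), which handles each arity $n$ separately, and that properness of a map of symmetric sequences (collections) just means properness in each arity. The main obstacle I anticipate is the finiteness claim — that there are only finitely many isomorphism classes of proper trees in superposition with a fixed number of inputs — which must be argued carefully from the definitions of \emph{reduced} and \emph{proper} (the reducedness condition rules out "padding" a tree with stumps without bound, and the proper condition rules out chains of unary vertices), together with the bookkeeping that this finiteness is exactly what lets us pass from "proper on each summand" to "proper" without worrying about infinite coproducts. A secondary technical point is to be explicit about the point-set category one works in so that "finite product of proper maps is proper" and "finite composite of proper maps is proper" are valid; the paper's standing assumption that all spaces are paracompact Hausdorff (and, in practice, of the homotopy type of CW complexes) should suffice, but I would state the precise lemma being invoked.
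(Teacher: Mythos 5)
Your proposal is correct and follows essentially the same route as the paper: on each arity the source $\propergenerators(P,Q)(n)$ has only finitely many tree components, and on each component the composite $(f\otimes g)\circ q$ unwinds as a finite product of copies of $f$ and $g$ followed by finitely many composition maps of $R$, so properness follows from closure of proper maps under finite products and composition. The paper states this in a few lines without spelling out the finiteness of isomorphism classes of proper trees or the point-set hypotheses; your more careful treatment of those two points is a reasonable elaboration of the same argument rather than a different one.
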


\begin{proof}
	We have the following diagram of $G$-collections
	\[\begin{tikzcd}
			{\propergenerators(P,Q)} & {P\otimes Q} \\
			& R
			\arrow["f\otimes g", from=1-2, to=2-2]
			\arrow["q", two heads, from=1-1, to=1-2]
			\arrow["{(f\otimes g)\circ q}"', from=1-1, to=2-2]
		\end{tikzcd}\]
	Unpacking what the map $(f\otimes g)\circ q$ does on each component, it is a product of components of the $f$, and $g$ maps and then composes this with composition maps of $R$. For each $n$, $\propergenerators(P,Q)(n)$ has finite-many components. Since proper maps are closed under composition and finite products, the lemma then follows.
\end{proof}

Now, we want to apply this idea to our operads of interest. First, we note that one of the conditions \cref{proper map from tensor lemma} always holds for our operads.

\begin{lemma}\label{lemma dilation component includes is closed}
    Given dilation representations $\rho: \vG[cal]\to \DL(V_\bullet,W_\bullet)$, $\psi: \vH[cal]\to \DL(\vV[']_\bullet,\vW[']_\bullet)$, the operad map \[\vi[\rho]:\ambientoperad^\rho\to \ambientoperad^{\rho\times \psi}\] is proper. As a consequence of \cref{suboperads are closed lemma}, the corresponding maps for $\ambientoperad^\rho(S)$ and $\staroperad^\rho(S)$ are also proper.
\end{lemma}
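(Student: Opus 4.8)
The plan is to reduce the statement to the single space-level fact that the unary component of $i_\rho$ is a closed embedding, and then invoke two elementary generalities: a closed embedding into a Hausdorff space is proper, and a finite cartesian product of proper maps between Hausdorff spaces is proper.

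Unwinding the definitions, $\ambientoperad^\rho=\mathscr{O}(\mathcal{G}^\rho)$ and $\ambientoperad^{\rho\times\psi}=\mathscr{O}\big((\mathcal{G}\times\mathcal{H})^{\rho\times\psi}\big)$, and since $\mathscr{O}(A)(n)=A^{n}$ functorially in $A$ (with $\mathscr{O}$ of a monoid map being the corresponding cartesian power on each arity), the map $i_\rho$ is, in arity $n$, the $n$-fold cartesian power of the unary map
\[
j\colon \mathcal{G}^\rho\longrightarrow (\mathcal{G}\times\mathcal{H})^{\rho\times\psi}\cong \mathcal{G}^\rho\times\mathcal{H}^\psi,\qquad x\longmapsto (x,\id_{\psi})
\]
(the arity-$0$ component being the identity of $\ast$). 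By \cref{ass:topologicalgroups} the groups $\mathcal{G},\mathcal{H}$ are Hausdorff, and $V\oplus W$ is Hausdorff, so every space occurring is Hausdorff; as a finite product of proper maps between Hausdorff spaces is proper (the preimage of a compactum $K\subseteq B_1\times B_2$ under $f_1\times f_2$ is a closed subset of the compact set $f_1^{-1}(\pr_1 K)\times f_2^{-1}(\pr_2 K)$), it suffices to show that $j$ is proper.

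On underlying spaces $\mathcal{G}^\rho=\mathcal{G}\times V$ and $(\mathcal{G}\times\mathcal{H})^{\rho\times\psi}=\mathcal{G}\times\mathcal{H}\times V\times W$, and $j$ is the product of the two maps $\mathcal{G}\hookrightarrow\mathcal{G}\times\mathcal{H}$, $f\mapsto(f,\id_{\mathcal{H}})$, and $V\hookrightarrow V\oplus W$, $v\mapsto(v,0)$. The first is a closed embedding because $\{\id_{\mathcal{H}}\}$ is closed in the Hausdorff group $\mathcal{H}$, so its image is the closed set $\mathcal{G}\times\{\id_{\mathcal{H}}\}$; the second is a closed embedding of vector spaces. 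A product of closed embeddings is a closed embedding, so $j$ realizes $\mathcal{G}^\rho$ as a closed subspace of $(\mathcal{G}\times\mathcal{H})^{\rho\times\psi}$. Finally, a closed embedding $A\hookrightarrow B$ into a Hausdorff space is proper: for compact $K\subseteq B$ the set $j^{-1}(K)=A\cap K$ is closed in the compact set $K$, hence compact. This shows $i_\rho$ is proper.

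For the consequence: by \cref{suboperads are closed lemma}, $\ambientoperad^\rho(S)$ and $\staroperad^\rho(S)$ are closed subcollections of $\ambientoperad^\rho$, and the corresponding maps into the appropriate subcollections of $\ambientoperad^{\rho\times\psi}$ are simply the restrictions of $i_\rho$ to these closed subspaces. A proper map remains proper after restricting its domain to a closed subspace and its codomain to any subspace containing the image, by the same ``closed subset of a compact set'' argument. I do not expect a genuine obstacle here; the only points requiring care are the Hausdorffness bookkeeping and the harmless identifications of the semidirect products with the corresponding direct products of topological spaces, both of which are guaranteed by the standing conventions of \cref{ass:topologicalgroups} and \cref{filtered group conventions}.
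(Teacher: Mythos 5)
Your proof is correct, but it takes a different route from the paper's. The paper's argument is a one-liner: the operad map $i_\rho$ admits a left inverse, namely the projection $\pr_\rho\colon\ambientoperad^{\rho\times\psi}\to\ambientoperad^\rho$, so $\pr_\rho\circ i_\rho=\id$, and since the identity is proper, $i_\rho$ is proper (the standard cancellation fact that $f$ is proper whenever $g\circ f$ is proper and the middle space is Hausdorff). You instead unwind $\mathscr{O}(-)$ to see that each arity component of $i_\rho$ is a finite cartesian power of the unary map $\mathcal{G}^\rho\to\mathcal{G}^\rho\times\mathcal{H}^\psi$, check that this unary map is a closed embedding (using that $\{\id_{\mathcal{H}}\}$ is closed in the Hausdorff group $\mathcal{H}$ and that $V\times\{0\}$ is closed in $V\oplus W$), and then use that closed embeddings are proper and that finite products of proper maps with Hausdorff codomains are proper. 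Both arguments are sound and use the same standing Hausdorffness conventions; the retraction argument is shorter and avoids unpacking the semidirect products, while your version makes the point-set hypotheses explicit and identifies the image of $i_\rho$ as a closed subcollection, which is mildly more information. Your handling of the final claim --- restricting a proper map to the closed subcollections $\ambientoperad^\rho(S)$ and $\staroperad^\rho(S)$ (closed by \cref{suboperads are closed lemma}) and to any codomain containing the image --- matches the paper's intent and is correct.
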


\begin{proof}
    The operad map $\vi[\rho]$ has a left inverse given by the projection map \[\pr_\rho:\ambientoperad^{\rho\times \psi}\to \ambientoperad^\rho.\] So $\pr_\rho\circ \vi[\rho] = \id[\ambientoperad^\rho]$, and as $\id[\ambientoperad^\rho]$ is trivially proper, we have that $\vi[\rho]$ is proper.
\end{proof}

Using \cref{lemma dilation component includes is closed,proper map from tensor lemma}, if we write $\staroperad_i^{\rho}(S)$ for the little star operads from the groups $\vG[cal,i]$, we get that the maps \[\propergenerators(\staroperad_i^\rho(S),\staroperad_i^\psi(T))\to \staroperad_i^{\rho\times \psi}(S\times T)\] are proper. Taking the colimit, which is an increasing filtration, we get that the map \[\propergenerators(\staroperad^\rho(S),\staroperad^\psi(S))\to \staroperad^{\rho\times \psi}(S)\] is a closed map. We then have the following generalization of \cref{proper map from tensor lemma}.

\begin{lemma}\label{lemma on induced map are closed}
    For dilation representations $\rho$, and $\psi$ that satisfy \cref{filtered group conventions}, in the diagram \[\begin{tikzcd}
			{\propergenerators(\staroperad^\rho(S),\staroperad^\psi(T))} & {\staroperad^\rho(S)\otimes \staroperad^\psi(T)} \\
			& \staroperad^{\rho\times \psi}(S\times T)
			\arrow["{\vi[\rho]\otimes i_\psi}", from=1-2, to=2-2]
			\arrow["q", two heads, from=1-1, to=1-2]
			\arrow["{(i_\rho\otimes i_\psi)\circ q}"', from=1-1, to=2-2]
		\end{tikzcd}\]
        The maps $(i_\rho\otimes i_\psi)\circ q$ and $i_\rho\otimes i_\psi$ are closed.
\end{lemma}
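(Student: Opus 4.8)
The plan is to reduce the statement to two inputs that are already available: \cref{proper map from tensor lemma}, applied at each finite stage of the filtration from \cref{filtered group conventions}, together with the surjectivity of the quotient map $q$. Accordingly the proof will treat the two assertions of the lemma in turn.

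First I would handle the composite $(i_\rho\otimes i_\psi)\circ q$. Fix the exhausting filtration by the finite-dimensional stages $\vG[cal,i]^\rho$ supplied by \cref{filtered group conventions}, and write $\staroperad_i^\rho(S)$, $\staroperad_i^\psi(T)$, $\staroperad_i^{\rho\times\psi}(S\times T)$ for the little star operads built from the corresponding groups. At each level $i$ the inclusions $i_\rho\colon\staroperad_i^\rho(S)\to\staroperad_i^{\rho\times\psi}(S\times T)$ and $i_\psi\colon\staroperad_i^\psi(T)\to\staroperad_i^{\rho\times\psi}(S\times T)$ are proper by \cref{lemma dilation component includes is closed}, and the composition maps of $\staroperad_i^{\rho\times\psi}(S\times T)$ are proper by the hypothesis of \cref{filtered group conventions}; so \cref{proper map from tensor lemma} applies and shows that
\[
\propergenerators(\staroperad_i^\rho(S),\staroperad_i^\psi(T))\xrightarrow{\ (i_\rho\otimes i_\psi)\circ q\ }\staroperad_i^{\rho\times\psi}(S\times T)
\]
is proper, in particular closed. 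Now $\propergenerators(\staroperad^\rho(S),\staroperad^\psi(T))$ is the colimit of the $\propergenerators(\staroperad_i^\rho(S),\staroperad_i^\psi(T))$ along an increasing filtration by closed $G$-subcollections — a proper tree has finitely many vertices, each label of which lies in some finite stage — and likewise $\staroperad^{\rho\times\psi}(S\times T)=\colim_i\staroperad_i^{\rho\times\psi}(S\times T)$; the composite $(i_\rho\otimes i_\psi)\circ q$ respects the two filtrations, carrying the $i$-th stage of the source into the $i$-th stage of the target. Passing to the colimit, and using that in the colimit topology a subset is closed precisely when it meets every stage of the filtration in a closed set, one deduces from the level-wise closedness that $(i_\rho\otimes i_\psi)\circ q$ is a closed map.

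Next I would deduce that $i_\rho\otimes i_\psi$ is itself closed, using that $q$ is surjective. Recall that $q\colon\propergenerators(\staroperad^\rho(S),\staroperad^\psi(T))\to\staroperad^\rho(S)\otimes\staroperad^\psi(T)$ is the continuous, $G$-equivariant surjection obtained by restricting the surjection $q\colon\vF(P,Q)\to P\otimes Q$ to proper trees. Given a closed set $K\subseteq\staroperad^\rho(S)\otimes\staroperad^\psi(T)$, the preimage $q^{-1}(K)$ is closed by continuity and $q(q^{-1}(K))=K$ by surjectivity, whence
\[
(i_\rho\otimes i_\psi)(K) = (i_\rho\otimes i_\psi)(q(q^{-1}(K))) = ((i_\rho\otimes i_\psi)\circ q)(q^{-1}(K)),
\]
which is closed by the first part. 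Hence $i_\rho\otimes i_\psi$ is closed, which finishes the proof.

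The main obstacle will not be the algebra — that is packaged into \cref{proper map from tensor lemma} (the \Barata--\Moerdijk~argument) and into the surjectivity of $q$ — but the colimit passage in the first part: the claim that a compatible increasing union of proper maps between filtered spaces assembles to a closed map. This is precisely where the hypotheses of \cref{filtered group conventions} (the existence of the finite-dimensional exhaustion and properness of the composition maps at each stage) are used, and it is the step where one must check that the colimit topologies on source and target interact well with the level-wise properness.
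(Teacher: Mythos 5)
Your proposal is correct and follows essentially the same route as the paper: level-wise properness of $(i_\rho\otimes i_\psi)\circ q$ via \cref{lemma dilation component includes is closed,proper map from tensor lemma} at each stage of the filtration from \cref{filtered group conventions}, passage to the colimit along the increasing filtration to obtain closedness, and then the surjectivity of $q$ (writing a closed $K$ as $q(q^{-1}(K))$) to transfer closedness to $i_\rho\otimes i_\psi$. Your explicit attention to the colimit step only spells out what the paper's terse argument leaves implicit.
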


This follows from the above and that $q$ is surjective on components. A consequence of this, is because the additive core $\additivecore$ is the image of a closed subspace of $\propergenerators(\staroperad^\rho(S),\staroperad^\psi(T))$, it follows that if $i_\rho\otimes i_\psi$ is injective on $\additivecore$, then this is an embedding.

We will now use the rest of this appendix to justify that \cref{filtered group conventions} hold for our two primary examples of interest. First, we observe the following.

\begin{lemma}{}{}
    If $V$ is a finite-dimensional $G$-representation, the multiplication map on the $\DA(V)$ is proper
\end{lemma}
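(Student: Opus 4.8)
The statement to prove is: for a finite-dimensional $G$-representation $V$, the multiplication map $\mu: \DA(V)\times \DA(V)\to \DA(V)$ is proper. Recall $\DA(V) = \DL(V)\ltimes V = \GL(V)\ltimes V$ restricted to dilations, but in fact since the ambient statement only concerns $\DA(V)$ for a single finite-dimensional $V$, we have $\DA(V) = \Lambda(V)\cdot\Orth(V)\ltimes V$ — a closed subgroup of the affine group $\GA(V)$. The plan is to reduce to checking that a continuous map out of a locally compact Hausdorff space is proper, which (for maps between locally compact Hausdorff spaces) is equivalent to being a closed map with compact fibers, and is in turn implied by the preimage of every compact set being compact.

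First I would observe that $\DA(V)$ is a finite-dimensional Lie group: $\Orth(V)$ is compact, $\Lambda(V)\cong \mathbb{R}_{>0}$, and the translation part is $V\cong\mathbb{R}^{\dim V}$, so $\DA(V)$ is homeomorphic as a space to $\Orth(V)\times \mathbb{R}_{>0}\times V$, hence locally compact, Hausdorff, and second countable. Then I would argue directly that for a compact set $K\subseteq \DA(V)$, the preimage $\mu^{-1}(K)$ is compact. Write an element of $\DA(V)$ as a pair $(A, v)$ with $A\in\DL(V)$ (a positive scalar times an orthogonal map) and $v\in V$, with multiplication $(A,v)\cdot(B,w) = (AB, v + Aw)$. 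Projecting $K$ to its $\DL(V)$-component gives a compact set $K_1\subseteq \DL(V)$, and projecting to the translation component gives a compact $K_2\subseteq V$. If $\big((A,v),(B,w)\big)\in \mu^{-1}(K)$ then $AB\in K_1$ and $v+Aw\in K_2$. The scalar parts: writing $A = \alpha U$, $B=\beta U'$ with $\alpha,\beta>0$ and $U,U'$ orthogonal, $AB$ having bounded (and bounded-away-from-zero) operator norm forces $\alpha\beta$ to lie in a compact subset of $\mathbb{R}_{>0}$. This alone does not bound $\alpha$ and $\beta$ separately — and that is exactly the main obstacle (see below) — so I would use that $K\subseteq\DA(V)$ is compact \emph{inside} $\DA(V)$, meaning its closure in the one-point compactification of $\DA(V)$ does not meet the added point; I'd need to show this controls the individual scalars. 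Concretely: a net $(A_n, v_n),(B_n,w_n)$ in $\mu^{-1}(K)$ with $\alpha_n\to\infty$ would force $\beta_n\to 0$; then $A_n w_n = (A_nB_n)(B_n^{-1}w_n)$, and $B_n^{-1}w_n$ blows up unless $w_n\to 0$ — but if $\|w_n\|$ stays bounded and $\beta_n\to 0$, and $w_n\not\to0$, then $B_n^{-1}w_n\to\infty$ while $A_nB_n$ stays bounded, contradicting $v_n + A_nw_n \in K_2$ unless the translation part $v_n$ compensates; but $v_n$ is constrained because $(A_n,v_n)$ ranges over the (compact-after-a-different-argument) factor.

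The cleanest route, which I would actually write up, is to factor $\mu$ through the ambient group $\GA(V) = \GL(V)\ltimes V$. The group $\GL(V)$ is \emph{not} compact, so multiplication on $\GL(V)$ is not proper, and hence the claim genuinely uses that $\DA(V)$ is a \emph{closed} subgroup on which the relevant ``scale'' coordinate behaves well. So the honest statement is: $\DA(V)$ is a closed subgroup of $\GA(V)$, and I claim that for \emph{any} Lie group $\mathcal{G}$ that is a closed subgroup of $\GA(V)$ on which the function $(A,v)\mapsto (\text{largest eigenvalue of }A^\top A) + (\text{smallest eigenvalue of }A^\top A)^{-1} + \|v\|$ is proper, the multiplication map is proper. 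For $\DA(V)$ this auxiliary function is proper because the $\Orth(V)$-factor is compact and the only noncompact directions are the scalar $\alpha\in(0,\infty)$ and the translation $v\in V$, both of which this function detects. Given properness of multiplication $\mu_{\mathcal{G}}$, one checks: if $\mu_{\mathcal{G}}(g_1,g_2)\in K$ and $g_1 g_2 = (C,u)$ with $C=A_1A_2$, $u = v_1 + A_1 v_2$, then $\|A_1\|\,\|A_2^{-1}\| \le \|C\|\cdot(\text{stuff})$... I'd carry this through using that for dilations $\|A_1 A_2\| = \alpha_1\alpha_2$ and $\|(A_1A_2)^{-1}\| = (\alpha_1\alpha_2)^{-1}$, so $\alpha_1\alpha_2$ is pinned to a compact subset of $(0,\infty)$; and separately $u - v_1 = A_1 v_2$ with $v_1$ ranging over a compact set (first coordinate of $K$ pulled back) — no, this circularity is the issue.

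\textbf{The real argument.} I would instead prove it by a sequential/net compactness argument making full use of boundedness of \emph{both} projections of $K$ and of the inverse: if $K\subseteq\DA(V)$ is compact then so is $K^{-1}$, and the map $\DA(V)\times\DA(V)\to\DA(V)\times\DA(V)\times\DA(V)$, $(g,h)\mapsto(g,h,gh)$ restricted to the preimage of $\DA(V)\times\DA(V)\times K$ has image inside... Actually the slick proof: multiplication $m:\mathcal{G}\times\mathcal{G}\to\mathcal{G}$ on a topological group is proper if and only if the map $(g,h)\mapsto(gh,h)$ is proper, and the latter is a \emph{homeomorphism} of $\mathcal{G}\times\mathcal{G}$ (with inverse $(g,h)\mapsto(gh^{-1},h)$), hence trivially proper — so multiplication is always proper? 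No: a homeomorphism is proper, so $(g,h)\mapsto(gh,h)$ is proper, and $m$ is the composite with projection $\mathcal{G}\times\mathcal{G}\to\mathcal{G}$, which is \emph{not} proper when $\mathcal{G}$ is noncompact. So $m$ is proper iff the projection restricted to a closed saturated set is — this is where compactness of $\mathcal{G}$ would suffice but is false here. Hence I expect the paper's actual proof to use the specific structure $\DA(V)\cong\Orth(V)\times\mathbb{R}_{>0}\times V$ and the explicit multiplication formula to show $\mu^{-1}(K)$ is closed and bounded in these coordinates; \textbf{the main obstacle} is exactly disentangling the scalar coordinates $\alpha_1,\alpha_2$ given only a bound on $\alpha_1\alpha_2$, which I would resolve by additionally invoking boundedness of $K$ under $g\mapsto g^{-1}$ combined with the translation-coordinate constraint $v_1 + A_1 v_2 \in \pi_2(K)$: fixing $v_2$ in a compact set and $v_1$ in a compact set forces $A_1 v_2$ bounded, which for generic $v_2$ bounds $\alpha_1$, and I would make this precise by integrating over, or taking a basis of, $V$ so that boundedness of $A_1 e_i$ for a basis $\{e_i\}$ bounds $\alpha_1$. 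I would then assemble these bounds to conclude $\mu^{-1}(K)$ is a closed bounded subset of $\Orth(V)\times\mathbb{R}_{>0}\times V\times\Orth(V)\times\mathbb{R}_{>0}\times V$, hence compact, completing the proof.
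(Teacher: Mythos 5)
Your closing step is the real gap, and it cannot be repaired: read literally, the statement---properness of the group multiplication $\mu\colon \DA(V)\times\DA(V)\to\DA(V)$---is false whenever $\DA(V)$ is noncompact, i.e.\ for every $V\neq 0$. The fibre over the identity is the antidiagonal $\mu^{-1}(\id)=\set{(g,g^{-1})\given g\in\DA(V)}$, which is homeomorphic to $\DA(V)$ and hence noncompact; in the coordinates $\DA(V)\cong\orth(V)\times\mathbb{R}_{>0}\times V$ the pairs $\bigl((e,\alpha,0),\,(e,\alpha^{-1},0)\bigr)$, with $e$ the identity of $\orth(V)$ and $\alpha\to\infty$, all multiply to the identity. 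This is exactly where your ``generic $v_2$'' patch collapses: on the antidiagonal the translation parts may vanish, so boundedness of $A_1v_2$ imposes no bound on the scalar of $A_1$, and the obstacle you correctly isolated---recovering $\alpha_1,\alpha_2$ from a bound on $\alpha_1\alpha_2$---is not a technical nuisance but a genuine obstruction. Your own aside, that $(g,h)\mapsto(gh,h)$ is a homeomorphism so that $\mu$ is a projection in disguise, was the right thread to pull: it shows $\mu$ is proper precisely when the group is compact.

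For comparison, the paper's proof works in the same coordinates, covers $\DA(V)$ by the compact pieces $F_{\lambda_1,\lambda_2,n,n+1}$, and asserts an inclusion $\mu^{-1}F_{\lambda_1,\lambda_2,n}\subseteq F'\times F'$ into a product of compact pieces; that asserted inclusion is precisely the scalar-disentangling step you flagged, and it fails for the antidiagonal pairs above, so the published argument does not overcome the obstacle either. What is true, and what the downstream results (\cref{proper map from tensor lemma}, \cref{lemma on induced map are closed}) actually need, is properness of multiplication after restricting to a submonoid on which the dilation scalars are bounded above---for instance the image of $\mathcal{G}^\rho(S)$, the affine dilations carrying the bounded star domain $S$ into itself, where the scalar is at most $1$ and the translation part lies in $S$. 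There your coordinate bounds do assemble: a compact set of products has scalars bounded away from $0$, which together with the upper bound pins both factors' scalars; translations stay in a bounded set; $\orth(V)$ is compact; so the preimage of a compact set is closed and bounded, hence compact. I would therefore restate the lemma (and the hypothesis in \cref{filtered group conventions}) at that restricted level of generality rather than for the full group $\DA(V)$, and run your argument there.
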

\begin{proof}
    Recall that we have a homeomorphism \[\DA(V)\cong \orth(V)\times \mathbb{R}_{>0}\times V.\] For $0<\lambda_1<\lambda_2$ and $n\in \mathbb{N}$, we have the compact closed subspaces \[F_{\lambda_1,\lambda_2,n,n+1} := \set[\bigg]{(f,a,v)\in \orth(V)\times \mathbb{R}_{>0}\times V\given \lambda_1 \leq a \leq \lambda_2, n\leq \norm{v}\leq n+1}.\] Then $\set{F_{\lambda_1,\lambda_2,n,n+1}}$ is a closed cover and has a locally finite subcover. So it is sufficient to show that the multiplication map \[\mu: \DA(V)\times \DA(V)\to \DA(V)\] restricts to a proper map $\restr{\mu}{\mu^{-1}{F_{\lambda_1,\lambda_2,n}}}.$ This is easily seen by observing that there exists some $\lambda_1^{\prime},\lambda^{\prime}_2,$ and $M\in \mathbb{N}$ such that \[\mu^{-1}F_{\lambda_1,\lambda_2,n} \subseteq F_{\lambda_1^{\prime},\lambda^{\prime}_2,0,M}\times F_{\lambda_1^{\prime},\lambda^{\prime}_2,0,M}.\] Since this is a closed subspace of a compact Hausdorff space, we have the preimage is also compact. Hence, the lemma follows.
\end{proof}

We can then use this lemma to show the following 

\begin{lemma}\label{key convention verification lemma}
    Given a $G$-representation $V$ and $G$-filtration pair $(V_\bullet,V_\bullet^\prime)$. Let $A(V_i)$ be a closed $G$-subgroup of $\Orth(V_i)$. The group \[\vG[cal]=A(V_\bullet)\times \Lambda(V_i^\prime)\] where $A(V_\bullet)=\prod_i A(V_i)$ and the dilation representation given by the inclusion \[\vG[cal]\hookrightarrow \DL(V_\bullet,V_\bullet^\prime)\] satisfy the conditions of \cref{filtered group conventions}.
\end{lemma}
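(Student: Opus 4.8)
The plan is to reduce the claim to the statement that each multiplication map on $\DA(V_\bullet \cap W_i, V_\bullet' \cap W_i)$ is proper, and then to deduce this from the previous lemma (properness of multiplication on $\DA(W)$ for finite-dimensional $W$) by a retraction argument. First I would fix the filtration $\{0\} \subseteq W_1 \subseteq W_2 \subseteq \cdots$ with $\cup_i W_i = V$ coming from the $G$-filtration pair hypothesis: each $W_i$ is a finite-dimensional $G$-subrepresentation compatible with both decompositions, so $V_\bullet \cap W_i$ and $V_\bullet' \cap W_i$ are genuine decompositions of $W_i$, and $V_\bullet \cap W_i \leq V_\bullet' \cap W_i$. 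Then, unravelling the pullback square defining $\vG[cal,i]^\rho$ from \Cref{filtered group conventions} with $\rho$ the inclusion $\vG[cal] \hookrightarrow \DL(V_\bullet, V_\bullet')$, one checks that $\vG[cal,i]^\rho$ is exactly $A(V_\bullet \cap W_i) \times \Lambda((V_i' \cap W_i)_\bullet) \ltimes W_i$, i.e. $\DA$-type group built from the restricted data. So the content of the lemma is precisely that the multiplication map of this $\DA$-type group is proper.

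Next I would handle the orthogonal part. The previous lemma gives that multiplication on $\DA(W_i) = \orth(W_i) \times \mathbb{R}_{>0} \times W_i$ is proper. The group $\vG[cal,i]^\rho$ sits inside $\DA(W_i)$ (after identifying $\Lambda$ of a decomposition with a subgroup of the scalars-per-block, hence of $\GL$, and $A(V_\bullet \cap W_i) \subseteq \orth(W_i)$) as a \emph{closed} subgroup: $A(V_i)$ is closed in $\orth(V_i)$ by hypothesis, the product $A(V_\bullet \cap W_i)$ is closed in $\orth(W_i)$, and $\Lambda((V' \cap W_i)_\bullet)$ is closed in the scalar subgroup. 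A closed subgroup $H \leq K$ of a topological group on which multiplication $K \times K \to K$ is proper again has proper multiplication, because $H \times H$ is closed in $K \times K$, the restriction of a proper map to a closed subspace of the domain with codomain restricted to a closed subspace (here $H \subseteq K$, and $\mu(H\times H) \subseteq H$) is proper, and $\mu^{-1}_H(L) = \mu_K^{-1}(L) \cap (H \times H)$ is compact for compact $L \subseteq H$ since it is closed in the compact set $\mu_K^{-1}(L)$. Applying this with $K = \DA(W_i)$ and $H = \vG[cal,i]^\rho$ gives properness of the multiplication on $\vG[cal,i]^\rho$.

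Finally I would check the two items of \Cref{ass:topologicalgroups}: the partial section $s_\rho : \Lambda(V_\bullet') \to \vG[cal]$ is the obvious inclusion into the second factor of $A(V_\bullet) \times \Lambda(V_i')$, which is a $G$-homomorphism (the $G$-action on $\Lambda$ is trivial) and visibly a section of $\rho$ composed with the projection to $\Lambda(V_\bullet')$; and $\vG[cal] = A(V_\bullet) \times \Lambda(V_\bullet')$ is paracompact Hausdorff since $A(V_\bullet)$ is a closed subgroup of the compact group $\orth(V_\bullet)$ and $\Lambda(V_\bullet')$ is a countable increasing union of the second-countable locally compact groups $\Lambda_W$, hence paracompact. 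The main obstacle I anticipate is purely bookkeeping: carefully identifying the pullback group $\vG[cal,i]^\rho$ with the expected $\DA$-type group and verifying that the inclusion into $\DA(W_i)$ is closed and compatible with the semidirect product structures — once that identification is in place, the properness is immediate from the preceding lemma and the closed-subgroup observation.
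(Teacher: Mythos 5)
There is one step in your argument that fails as stated: the claim that $\mathcal{G}_i^{\rho}$ sits inside $\DA(W_i)$ with $\Lambda\bigl((V'_\bullet\cap W_i)\bigr)$ landing ``in the scalar subgroup.'' The group $\DA(W_i)$ in the preceding lemma is $\orth(W_i)\cdot\Lambda(W_i)\ltimes W_i\cong \orth(W_i)\times\mathbb{R}_{>0}\times W_i$, whose linear parts all have a \emph{single} singular value; a block dilation acting by $\lambda_j$ on $V'_j\cap W_i$ with two distinct $\lambda_j$'s has distinct singular values, so it cannot be written as an orthogonal map times a global scalar. Hence $\Lambda\bigl((V'_\bullet\cap W_i)\bigr)$ is not contained in $\DA(W_i)$ whenever the decomposition $V'_\bullet$ has more than one block, and your closed-subgroup argument has nothing to latch onto in that ambient group. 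This is exactly the point where the paper's (very terse) proof chooses a different ambient group: it realizes $\mathcal{G}_i^{\rho}$ as a closed subgroup of a finite \emph{product} of groups $\DA(W)$ with $W$ finite dimensional --- concretely one factor $\DA(V'_j\cap W_i)$ for each block $j$, with the orthogonal part $A(V_\bullet\cap W_i)$ and the translations distributing across the factors --- and then uses that properness of multiplication is preserved under finite products.

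The good news is that everything else you wrote repairs cleanly into the paper's argument: your observation that a closed subgroup $H\leq K$ inherits properness of multiplication (since $\mu_H^{-1}(L)=\mu_K^{-1}(L)\cap(H\times H)$ is closed in the compact set $\mu_K^{-1}(L)$) is correct and is precisely the implicit step the paper leaves unspoken, and your identification of the pullback group $\mathcal{G}_i^{\rho}$ with the $\DA$-type group built from the restricted data is the right bookkeeping. Replace the single ambient group $\DA(W_i)$ by $\prod_j\DA(V'_j\cap W_i)$, note that multiplication on a finite product of groups with proper multiplication is again proper, and your proof coincides with the paper's. (Your verification of \cref{ass:topologicalgroups} is extra --- the lemma only asserts \cref{filtered group conventions} --- but it is harmless.)
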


\begin{proof}
    Let $\set{0} \subseteq W_1 \subseteq W_2$ be a filtration of $V$ by $G$-representations. For each $i$, have the induced $G$-groups \[\vG[cal,i]^\rho \to \DA(V_\bullet\cap W_i,V_\bullet^\prime\cap W_i)\] which are closed subgroups of a product of the $\DA(W)$ groups with $W$ finite dimensional, so the multiplication maps are proper.
\end{proof}

Most importantly, the equivariant little disk operads $\littlediskoperad(V)$ and framed little disks $\littlediskoperad^{fr}_n$ are of the form in \cref{key convention verification lemma}.

\printbibliography
\addcontentsline{toc}{section}{Bibliography}

\end{document}